\documentclass[reqno]{amsart}
\usepackage[margin=1in]{geometry}
\usepackage[utf8]{inputenc}
\usepackage[T1]{fontenc}
\usepackage{graphicx}
\usepackage{xcolor}
\usepackage{mathrsfs}
\usepackage{amsthm}
\usepackage{amsmath}
\allowdisplaybreaks
\usepackage{amsfonts}
\usepackage{amssymb}
\usepackage{mathtools}
\usepackage{enumitem}
\usepackage[hidelinks]{hyperref}
\usepackage{tikz-cd}
\usepackage{tikz}
\usetikzlibrary{tqft}
\usetikzlibrary{positioning}
\usetikzlibrary{matrix,quotes,arrows.meta,bending}
\usetikzlibrary{decorations.markings}
\usetikzlibrary{calc}

\theoremstyle{plain}
\newtheorem{Thm}{Theorem}[section]
\newtheorem{Prop}[Thm]{Proposition}
\newtheorem{Lem}[Thm]{Lemma}

\theoremstyle{definition}
\newtheorem{Def}[Thm]{Definition}
\newtheorem{Rem}[Thm]{Remark}
\newtheorem{Exp}[Thm]{Example}

\newcommand{\QD}{\mathcal{QD}}
\newcommand{\R}{\mathbb{R}}
\newcommand{\G}{\mathcal{G}}
\renewcommand{\H}{\mathcal{H}}
\newcommand{\g}{\mathfrak{g}}
\newcommand{\h}{\mathfrak{h}}
\newcommand{\Lred}{L^{\mathrm{red}}}
\newcommand{\too}{\longrightarrow}
\newcommand{\mtoo}{\longmapsto}
\newcommand{\Ad}{\operatorname{Ad}}
\newcommand{\rk}{\operatorname{rk}}
\newcommand{\pr}{\mathrm{pr}}
\newcommand{\C}{\mathcal{C}}
\renewcommand{\S}{\mathcal{S}}
\renewcommand{\O}{\mathcal{O}}
\newcommand{\M}{\mathcal{M}}
\newcommand{\N}{\mathcal{N}}
\renewcommand{\t}{\mathfrak{t}}
\newcommand{\imp}{\text{imp}}
\renewcommand{\L}{\mathcal{L}}

\newcommand{\jmapb}{\jmath}

\newcommand{\Z}{\mathcal{Z}}
\newcommand{\Phib}{\widehat{\Phi}}
\newcommand{\X}{\mathcal{X}}
\newcommand{\D}{\mathcal{D}}
\newcommand{\gammab}{\widehat{\gamma}}
\newcommand{\A}{\mathcal{A}}
\newcommand{\Kt}{K_t}

\newcommand{\Nx}{N_x(\Kt)}
\newcommand{\expt}{\exp_t}

\newcommand{\Q}{\mathcal{Q}}
\newcommand{\tM}{\mathbb{T}M}
\newcommand{\tN}{\mathbb{T}N}
\newcommand{\KKS}{\text{KKS}}
\newcommand{\Ker}{\text{Ker}}
\newcommand{\K}{\mathcal{K}}
\newcommand{\prTs}{\mathrm{pr}_{T^\ast}}
\newcommand{\PP}{\mathcal{P}}

\newcommand{\Lie}{\operatorname{Lie}}
\newcommand{\ad}{\operatorname{ad}}

\title{Dirac geometry, deformation theory and shifted symplectic geometry}
\date{\today}
\author{Mohamed Moussadek Maiza}

\begin{document}

\maketitle

\begin{abstract}
We develop a Dirac deformation theory that interpolates between twisted Dirac geometry and Poisson geometry, and prove that this deformation is compatible with the principal structural operations of Dirac geometry: reduction along strong Dirac maps, integration to quasi-symplectic groupoids, and Morita equivalence of Lie algebroids.
The fundamental example is the deformation of the Cartan--Dirac structure $L_G$ on a compact Lie group~$G$
to the Kirillov--Kostant--Souriau Poisson structure on $\mathfrak{g}^*$, and its lift to a deformation
of the quasi-symplectic groupoid $D(G)\rightrightarrows G$ to the symplectic groupoid $T^*G\rightrightarrows\mathfrak{g}^*$.
As applications, we obtain a uniform deformation theory recovering, as special cases, the deformation of quasi-Hamiltonian to Hamiltonian reduction along conjugacy classes, the Steinberg and Sevostyanov slices to their additive (Kostant, Slodowy) counterparts, the multiplicative parabolic and unipotent reductions, the quasi-Hamiltonian implosion to symplectic implosion, and the
multiplicative Moore--Tachikawa varieties to their additive analogues. In the language of quasi-symplectic groupoid presentations of $1$-shifted symplectic stacks, our main reduction theorem yields a smooth deformation of the corresponding reduced spaces.
\end{abstract}

\section{Introduction}
\subsection*{Motivation.}
Quasi-Hamiltonian and Hamiltonian geometry are linked by an old and recurring analogy.
Conjugacy classes in a compact Lie group~$G$ behave like coadjoint orbits in $\mathfrak{g}^*$;
the multiplicative moment map $\mu\colon M\to G$ of Alekseev--Malkin--Meinrenken behaves
like an ordinary moment map; the double $D(G)$ behaves like the cotangent bundle $T^*G$.
This analogy is more than formal: in many concrete settings, the multiplicative object
\emph{deforms} smoothly to its additive counterpart. The deformation of quasi-Hamiltonian to Hamiltonian $G$-spaces of \cite{burelle2026deformations} and the recent deformation of Hamiltonian quasi-Poisson $G$-manifolds to Hamiltonian Poisson $G$-manifolds and the deformation of group valued implosion to symplectic implosion of \cite{maiza2026multiplicative} all instantiate the same principle: multiplicative geometric structure on a manifold $M$ can be rescaled and degenerated, in a controlled way, to the corresponding additive structure on a manifold $N$.\\

What has been missing is a single theory in which all of these deformations are special cases of one structure, and in which the standard operations of Poisson and quasi-Poisson
geometry -- reduction, groupoid integration, Morita equivalence -- are visibly preserved along the deformation. The natural setting is Dirac geometry, since twisted Dirac structures
specialize on one side to quasi-Hamiltonian moment maps via the Bursztyn--Crainic
construction \cite{bursztyn2005dirac,bursztyn2007dirac}, and on the other side to
Poisson manifolds. The strong Dirac maps \cite{bursztyn2005dirac,bualibanu2026reduction}
provide a unified reduction theory that already covers, in the non-deforming setting, all of the reduction examples one wants to handle.
 
The aim of this paper is to introduce a deformation theory for twisted Dirac structures compatible with this reduction theory, and to show that it is broad enough to recover all the principal multiplicative-to-additive deformations as instances of a single theorem.
 
\subsection*{Main definition.}
A \emph{Dirac deformation} (Definition \ref{Dirac def}) from a $\lambda_M$-twisted
Dirac manifold $(M,L_M,\lambda_M)$ to a Poisson manifold $(N,L_N)$ consists of a smooth
family
\[
\pi_{\mathcal{M}}\colon \mathcal{M}\longrightarrow\mathbb{R},\qquad
\pi_{\mathcal{X}}\colon \mathcal{X}\longrightarrow\mathbb{R}
\]
of smooth manifolds together with fiberwise twisted Dirac structures $\mathcal{L}\subset T\mathcal{M}$,
$\mathcal{K}\subset T\mathcal{X}$, fiberwise closed $3$-forms $\lambda_{\mathcal{M}}$, $\lambda_{\mathcal{X}}$,
and a smooth fiberwise strong Dirac map $\widehat{\phi}\colon\mathcal{M}\to\mathcal{X}$,
such that on the generic fibers $t\neq 0$ the data assembles into twisted Dirac manifolds
with a strong Dirac map between them, while at the central fiber $t=0$ the twisting vanishes
and one recovers Poisson manifolds with a Poisson map. We require $(\mathcal{M}_1,\mathcal{L}_1)\cong(M,L_M)$
and $(\mathcal{M}_0,\mathcal{L}_0)\cong(N,L_N)$.
 
The fundamental example is the deformation
\[
L_G \;\rightsquigarrow\; L_{\mathfrak{g}^*}
\]
of the Cartan--Dirac structure on $G$ to the Kirillov--Kostant--Souriau structure on $\mathfrak{g}^*$,
constructed in Proposition \ref{Dirac G} using the local coordinates of the deformation space $\mathcal{G}=D(G,\{1\})$ of \cite{burelle2026deformations}. The rescaled
Cartan $1$-form $\sigma(x)/t$ extends smoothly across $t=0$ to $x^\vee\in T^*\mathfrak{g}$,
and the rescaled Cartan $3$-form $\eta_G/t$ extends smoothly to zero. 
 
\subsection*{Main results.}
The paper has four principal theorems.
 
\smallskip
\noindent\textbf{(1) The Dirac theory recovers Poisson deformation of quasi-Hamiltonian spaces.}
The Poisson deformation of Hamiltonian quasi-Poisson manifolds in the sense of \cite{maiza2026multiplicative}
gives rise to a Dirac deformation in our sense (Theorem \ref{embed thm}). The construction
is the natural one: the Bursztyn--Crainic Dirac structure associated to a quasi-Hamiltonian space
extends smoothly across the central fiber. Examples \ref{ex:UncontrolledKernel}
and \ref{ex:CP2} show that the converse fails: there exist Dirac deformations that do not arise
from any Hamiltonian quasi-Poisson manifold, so the Dirac deformation theory is strictly more general.
 
\smallskip
\noindent\textbf{(2) Compatibility with reduction along strong Dirac maps.}
Theorem~\ref{thm:main} shows that under clean-intersection and rank hypotheses on the fiberwise stabilizer subalgebroids, the reduced spaces $Q_t=\widehat{\phi}_t^{-1}(S_t)/A_{S_t,\widehat{\gamma}_t,\varphi_t}$
of B\u{a}libanu--Mayrand~\cite[Theorem 2.21]{bualibanu2026reduction} assemble into a smooth Dirac deformation.
Specializing the reduction level produces, in a unified way:
\begin{itemize}
\item the deformation of quasi-Poisson reduction at a point to Marsden--Weinstein reduction,
\item the deformation of reduction along conjugacy classes to reduction along coadjoint orbits,
\item the deformation of the Steinberg cross-section reduction to the Kostant slice reduction,
\item the deformation of multiplicative parabolic / unipotent reductions to their additive analogues,
\item the deformation of quasi-Hamiltonian implosion to symplectic implosion (recovering \cite[Theorem 5.3]{maiza2026multiplicative} from a Dirac-geometric perspective),
\item the deformation of Sevostyanov-slice / Whittaker reduction to Slodowy-slice reduction,
\item the deformation of multiplicative Moore--Tachikawa varieties to the additive
Moore--Tachikawa varieties of~\cite{crooks2024moore}.
\end{itemize}
Each of these is a single example in Section \ref{F example}, recovered as the application
of Theorem \ref{thm:main} to a specific deformation of reduction levels.
 
\smallskip
\noindent\textbf{(3) Integration and Morita equivalence.}
Under a uniform integrability hypothesis on the fiberwise monodromy groups, Theorem \ref{groupoid integra} shows that a smooth family of twisted Dirac manifolds integrates to a smooth family of quasi-symplectic groupoids, with the central fiber being
the symplectic groupoid integrating the Poisson limit. The fundamental example is the
deformation of $D(G)\rightrightarrows G$ to $T^*G\rightrightarrows\mathfrak{g}^*$
(Proposition \ref{prop:fund}). Building on the dual-pair formalism of Xu~\cite{xu2004momentum},
Proposition~\ref{prop:morita-LA} establishes that infinitesimal Morita equivalence of Lie algebroids
is preserved across $t=0$ when witnessed by a smooth family of dual pairs satisfying transversality
and rank-constancy hypotheses.
 
\smallskip
\noindent\textbf{(4) Reduced spaces and shifted-symplectic presentations.}
Section \ref{Section 7} adopts the language of $1$-shifted symplectic stacks
\cite{pantev2013shifted, calaque2021derived} to organize the reduction theorem in its natural conceptual frame.
We work entirely with the presentation model: a smooth family of quasi-symplectic groupoids
$\mathcal{G}\rightrightarrows\mathcal{X}$ \emph{presents} a smooth family of $1$-shifted symplectic
stacks $[\mathcal{X}_t/\mathcal{G}_t]$, and a smooth family of Hamiltonian $\mathcal{G}_t$-spaces
presents a smooth family of Lagrangian morphisms. We do not develop derived algebraic geometry
over $\mathbb{R}$ in the sense of \cite{pantev2013shifted, calaque2021derived}; that is beyond the scope of this paper.
The main statement, Theorem \ref{thm:shifted}, is that under clean-intersection and local-freeness hypotheses the fiber product of Lagrangian morphisms produces a smooth family
of $0$-shifted symplectic manifolds, deforming a smooth family of symplectic manifolds.
 
\subsection*{Relation to existing work.}
The deformation $D(G)\rightsquigarrow T^*G$ at the level of groupoids and forms is implicit
in \cite{burelle2026deformations}; what is new here is the recognition that it is the integration
of a deformation of Dirac structures, and that this perspective extends to all the reduction
examples above. The compatibility of strong-Dirac reduction with deformation appears to be new
even for the most studied examples, such as Steinberg-to-Kostant. The Morita compatibility
result builds on Xu \cite{xu2004momentum} but is, to our knowledge, the first deformation-theoretic statement in that theory.
 
The scheme of Section \ref{Section 7} is closely related in spirit to the shifted-symplectic perspective on Moore--Tachikawa varieties developed in \cite{crooks2024moore}, but specialized to the deformation context. We do not claim a derived statement; the content is a classical-shadow theorem about smooth families of
reduced spaces, expressed in the stack setting because that language renders the Lagrangian-correspondence structure transparent.
 
\subsection*{Organization.}
Section~\ref{section 1} recalls the necessary background on Lie algebroids, twisted
Dirac structures, strong Dirac maps, and reduction, following~\cite{bursztyn2005dirac,bursztyn2007dirac, bualibanu2026reduction}.
Section \ref{section 3} introduces the Dirac deformation theory, proves the
fundamental example $L_G\rightsquigarrow L_{\mathfrak{g}^*}$, and shows that the theory
strictly generalizes the Poisson deformation theory of \cite{maiza2026multiplicative}.
Section \ref{section 4} establishes compatibility with strong-Dirac reduction and applies it
to the principal examples in Section \ref{F example}.
Section \ref{section 6} develops the integration to quasi-symplectic groupoids and the
Morita compatibility. Section \ref{Section 7} reformulates the main reduction theorem in the language of presented $1$-shifted symplectic stacks.
 
\subsection*{Conventions.}
Throughout, $G$ denotes a (typically compact, occasionally complex reductive) Lie group with Lie
algebra $\mathfrak{g}$ equipped with an Ad-invariant scalar product $\langle\,,\,\rangle_{\mathfrak{g}}$.
The corresponding Cartan $3$-form on $G$ is denoted $\eta_G\in\Omega^3(G)$, and the corresponding
trivector on $\mathfrak{g}$ is $\chi\in\bigwedge^3\mathfrak{g}$. We write $\mathbb{T}M=TM\oplus T^*M$
for the generalized tangent bundle, with the standard pairing $\langle(v,\alpha),(w,\beta)\rangle=\alpha(w)+\beta(v)$.
The deformation parameter $t\in\mathbb{R}$ has $t=1$ for the multiplicative
endpoint and $t=0$ for the additive endpoint.

\subsection*{Acknowledgments} We thank Jean-Philippe Burelle, Maxence Mayrand and Yassine Ait-Mohamed for useful discussions. The corresponding author thanks the Institut des sciences mathématiques ISM and the Ernest-Monga scholarship for their financial support.

\section{Preliminaries} \label{section 1}
In this section, we recall the basic facts about Dirac geometry with some fundamental examples. The material of this part is based on \cite{bursztyn2005dirac,bursztyn2007dirac}.

\subsection{Lie algebroids.} Let $M$ be a smooth manifold. a Lie algebroid over $M$ consists of a vector bundle $ \pi: \A \too M$ together with a bundle map $\rho: \A \too TM$ called \textbf{anchor map} and a Lie algebra bracket on the space of sections $\Gamma(\A)$ of $\A$ satisfying the Leibniz rule \begin{equation}
    [\sigma,f\eta] = f[\sigma,\eta] + \L_{\rho(\sigma)}(f)\eta, \quad \forall \sigma,\eta \in \Gamma(\A), f \in C^{\infty}(M).
\end{equation}

The basic example is when $M$ is equipped with an infinitesimal action of a lie algebra $\g$: if $\rho_{\g}: \g \too \Gamma(TM)$ denote this infinitesimal action, then $\A$ is the trivial bundle $M\times \g$, such that the anchor map $\rho$ is given by \[ \rho: M\times \g \too TM, \quad (m,x) \mtoo \rho_{\g}(x)(m)\] and the bracket $[\cdot,\cdot]$ on the set of sections of $\Gamma(\A) = C^{\infty}(M,\g)$ is \begin{equation}
    [u,v](p) = [u(p),v(p)]_{\g} + d_{p}v(\rho_{\g}(u(p))) - d_{p}u(\rho_{\g}(v(p))), \quad \forall u,v \in \Gamma(\A), p\in M.
\end{equation}

We denote this lie algebroid by $M \rtimes \g.$\\ 

Another example comes from Poisson geometry: if $(M,\pi)$ is a Poisson manifold, then the cotangent bundle $T^{*}M$ is a Lie algebroid with anchor map $\pi^{\sharp}: T^{*}M \too TM, \quad \alpha \mtoo \pi^{\sharp}(\alpha)$ and the bracket on the sections of $T^{*}M$ is given by \begin{equation}
[\alpha,\beta] = \L_{\pi^{\sharp}(\alpha)}\beta - \L_{\pi^{\sharp}(\beta)}\alpha - d\pi(\alpha,\beta),    
\end{equation}

uniquely characterized by $[df,dg] = d\{f,g\}$ and the Leibniz rule. Here $\{f,g\} = \pi(df,dg)$ is the Poisson bracket.\\ The last example found its origins in quasi-Poisson geometry: Let $G$ be a compact Lie group and $\g$ the associated Lie algebra equipped with an $\Ad$-invariant scalar product $\langle \cdot , \cdot \rangle_{\g}.$ Let $\eta_G \in \Omega^{3}(G)$ be the invariant Cartan 3-form on $G$ and $\chi \in \bigwedge^{3} \g$ the corresponding tri-vector on $\g.$ If an orthonormal basis $(e_a)$ is fixed, then for any $u,v,w \in \g,$ the expression of $\chi$ is \begin{equation}
    \chi = \dfrac{1}{12}\sum f_{abc} e_{a}\wedge e_{b}\wedge e_{c}, \quad f_{abc} = \langle e_{a}, [e_b,e_c]\rangle_{\g}.
\end{equation}

Recall that a $G$-quasi-Poisson manifold consists of a manifold $M$ together with a $G$-invariant bi-vector $\pi$ such that \[ [\pi,\pi] = \rho_{M}(\eta_G),
\]

where $\rho_{M}: \g \too \Gamma(TM)$ is the infinitesimal action of $\g$ and $[,]$ represents the Schouten brackets. There exists also a notion of  quasi-Poisson $\g$-manifold which is related to the original one by a procedure of integration on infinitesimal actions. Now, consider a quasi-Poisson $\g$-manifold $(M,\pi)$. We define a vector bundle $\A$ over $M$ as follows: Take $\A = T^{*}M \oplus \g$ with the map $\rho: T^{*}M \oplus \g \too TM, \quad (\alpha,x) = \pi^{\sharp}(\alpha) + \rho_{M}(v)$ and the bracket $[,]$ such that: \begin{enumerate}
    \item $[(\alpha,0),(\beta,0)] = ([\alpha,\beta], \chi(\rho^{*}_{M}(\alpha \wedge \beta),\cdot)).$
    \item $[(0,v),(0,w)] = (0,[v,w]_{\g}),$
    \item $[(0,v),(\alpha,0)] = (\L_{\rho_{M}(v)}\alpha, 0),$
\end{enumerate}
for all $\alpha,\beta \in \Omega^{1}(M)$ and $v,w \in \g.$ Then, by \cite[\text{Theorem 2.5}]{bursztyn2005dirac}, $(T^{*}M \oplus \g,\rho,[\cdot,\cdot])$ is a Lie algebroid.

\subsection{Dirac manifolds} For any smooth manifold $M$, we denote \textbf{the generalized tangent bundle} over $M$ by $\mathbb{T}M = TM \oplus T^{*}M$. Moreover, let $\langle,\rangle$ be the pairing \[
\langle(v,\alpha),(w,\beta)\rangle = \alpha(w) + \beta(v), \quad \forall (v,\alpha),(w,\beta) \in \tM,
 \] and let $\lambda \in \Omega^{3}(M)$ be a closed 3-form on $M.$ Finally, consider the $\lambda$-twisted bracket \[
 [(v,\alpha),(w,\beta)]_{\lambda} = ([v,w], \L_{v}\beta - i_{w}d\alpha + i_{v\wedge w}\lambda).  
 \]

\begin{Def}(\cite{bualibanu2026reduction})
  A $\lambda$-\textbf{twisted Dirac structure} on $M$ consists of a vector sub-bundle $L$ of $\tM$ such that \begin{enumerate}
    \item $L$ is Lagrangian with respect to the pairing $\langle, \rangle.$
    \item The set of sections $\Gamma(L)$ of $L$ is invariant under the twisted bracket $[,]_{\lambda}.$
\end{enumerate}
\end{Def}

$L$ is in fact a Lie algebroid: The restriction of $[,]_{\lambda}$ to $\Gamma(L)$ is a Lie bracket and the anchor map $\rho: L \too TM$ is simply the projection $(v,\alpha) \mtoo v.$\\ 

The basic examples of twisted Dirac structures come from $2$-forms and bi-vectors: Let $\omega \in \Omega^{2}(M)$ be a smooth two-form. Then, we define the $(-d\omega)$-twisted Dirac structure $L_{\omega}$ as follows: \[
L_{\omega} \coloneqq \{(v,\omega^{\flat}(v)), v \in TM\}.
\]  

Here, $\omega^{\flat}: TM \too T^{*}M$ is the map $v \too i_{v}\omega.$ Moreover, a $\lambda$-twisted Dirac structure is induced by a two-form if and only if $L \cap T^{*}M = 0.$ In general, the image $\rho(L)$ of the twisted Dirac structure is an integrable generalized distribution. This defines a foliation of $M$ where each leaf $\mathcal{O}$ is equipped with a \textbf{pre-symplectic} form $\omega_{\O}$ such that $d\omega_{\O} = -\lambda_{|\O}.$\\

The second standard example is twisted Dirac structures induced from bi-vectors: for any bi-vector $\pi$ on $M,$ we define the Lagrangian sub-bundle $L_{\pi} \coloneqq  \{(v,\pi^{\sharp}(v)), v\in TM\}.$ Then, $L_{\pi}$ is $\lambda$-twisted Dirac if and only if $[\pi,\pi] = \pi^{\sharp}(\lambda).$ In this case, $L_{\pi}$ is called a \textbf{twisted Poisson Dirac structure.} Conversely, a twisted Dirac structure is induced by a bi-vector $\pi$ if and only if the kernel of $L$ defined by $\text{Ker}(L) = L\cap TM$ is reduced to $0.$ As a consequence, there exists a correspondence between Poisson bi-vectors and non-twisted Dirac structures with trivial kernels. In this case, the induced foliation on $M$ is the one whose non-degenerate leaves are symplectic.\\

The third example comes from a quasi-Poisson perspective: the special case of Poisson brackets in the construction of non-twisted Dirac structures extends also to quasi-Poisson settings: Suppose that $(M,P,\mu,G)$ is a Hamiltonian $G$-quasi-Poisson manifold \cite[\text{Definitions 2.1,2.2}]{alekseev2002quasi}. Let $\sigma: \g \too T^{*}G$ be the map $x\mtoo \dfrac{1}{2}(x^{L} + x^{R})^{\vee}$ when $x^{\vee}$ is the dual of $x\in TG$ under the identification $T^{*}G \cong TG$ using the scalar product $\langle,  \rangle_{\g}$ and \[
C = \mathrm{Id} - \dfrac{1}{2}e_{aM}\otimes \mu^{*}(\theta^{L}_{a} - \theta^{R}_{a}).
\]

Here, $B = (e_{a})$ is a fixed orthonormal basis of $\g$ and $\theta^{L}_{a}$ (respectively $\theta^{R}_{a}$) are the coordinates of the left-invariant Maurer-Cartan form (resp. the coordinates of the right-invariant Maurer-Cartan form) in $B$. Then, the quasi-Poisson manifold $M$ induces a $-\mu^{*}\phi$ twisted Dirac structure \cite[\text{Theorem 3.16}]{bursztyn2007dirac} defined by \[ \label{Dirac qp}
L = \{(\pi^{\sharp}(\alpha) + x_{M}, C^{*}(\alpha) + \mu^{*}\sigma(x)); \alpha \in T^{*}M, x\in \g\}.
\]

The anchor map $\rho: L\too TM$ is the image of the map \begin{align*}
    T^{*}M \oplus \g \too TM, \quad (\alpha,x) \mtoo \pi^{\sharp}(\alpha) + x_{M}.
\end{align*}

The leaves of the induced foliation on $M$ are quasi-Hamiltonian: for any non-degenerate leaf $\O$, there exists a two-form $\omega_{\O}$ such that $d\omega_{\O} = \mu^{*}\phi_{|\O}.$

The first example that appears naturally in the quasi-Poisson category is the group $G$ itself considered as a Hamiltonian $G$-quasi-Poisson manifold with respect to the conjugation action and the identity $\mathrm{Id}: G\too G$ as a moment map. The corresponding Dirac-Cartan structure is the following sub-bundle $L_{G} = \{(x^{L} - x^{R}, \sigma(x)); x\in \g\}.$ In particular, the leaves of the induced foliation on $G$ are conjugacy classes and there exists an isomorphism $L_{G}\cong G \rtimes \g$ \cite[\text{Example 3.4}]{bursztyn2005dirac}.

\subsection{f-Dirac and b-Dirac maps} Let $(M,L_{M},\lambda),(N,L_{N},\eta)$ be two twisted Dirac manifolds and $f: M \too N$ a smooth map. Then, the pullback $f^{*}L_{N}$ is the Lagrangian sub-bundle \[
f^{*}L_{N} = \{(v,f^{*}\alpha) \in \tM; (df(v),\alpha)\in L_{N}\}.
\] 
Then, if $f^{*}L_{N}$ is of a constant rank, $f^{*}L_{N}$ is a $f^{*}\eta$-twisted Dirac structure. Moreover, $f$ is said to be a \textbf{b-Dirac} if $L_{M} = f^{*}L_{N}$ (\cite[Subsection 1.3]{bualibanu2026reduction})\\

The push-forward $f_{*}L_{M}$ is the Lagrangian sub-bundle \[
f_{*}L_{M,p} = \{(df(v),\alpha) \in \tN_{f(p)}, (v,f^{*}\alpha) \in L_{M,p}.\}
\]

This distribution is not well defined unless we impose the condition  $f_{*}L_{M,p} = f_{*}L_{M,q}$ such that $f(p) = f(q).$ In this case, the map $f$ is \textbf{f-Dirac} if $f_{*}L_{M} = L_{N}.$ This notion generalizes the push-forward of vector fields. When $f$ is a diffeomorphism, $f$ is $b$-Dirac if and only it is $f$-Dirac, called a Dirac diffeomorphism.\\

\subsection{Strong Dirac maps} The notion of Strong Dirac maps \cite{bualibanu2026reduction}, or Dirac realizations \cite{bursztyn2005dirac} is a generalization of Poisson moment map: Consider a Poisson manifold $(M,\pi)$ and $\pi_{\text{KKS}}$ the linear Poisson structure on the dual of the Lie algebra $\g^{*}.$ Any Poisson map $\mu: (M,\pi_{M})\too (\g^{*},\pi_\KKS)$ induces a Hamiltonian infinitesimal action $\rho_{M}:\g \too TM,\quad x\mtoo \pi_{M}^{\sharp}(\mu^{*}(x)).$ Then, by The Marsden-Weinstein reduction theorem \cite{marsden1974reduction},  if $x$ is a regular value of $\mu,$ the quotient $\mu^{-1}(x)/\g_{x}$ inherits a Poisson structure. Now, if we replace $(M,\pi_M)$ and  $(\g^{*},\pi_{\KKS})$ by Dirac manifolds $(M,L_M)$ and $(N,L_N)$ respectively, a $f$-map $\mu: (M,L_M)\too (N,L_N)$ is called a \textbf{strong Dirac maps} \cite{bualibanu2026reduction} if \[
\mu^{*}\lambda_{N} = \lambda_{M} \text{ and } \text{Ker}(d\mu) \cap \text{Ker}(L_{M}) = 0.
\] 

In other terms, $\mu$ is a strong Dirac map if and only if $\mu$ induces an isomorphism between $\text{Ker}(L_M)$ and $\text{Ker}(L_{N}).$ The last condition means that for any $(w,\alpha)$ in $TN,$ there exists a unique $v \in TM$ such that $w = d\mu(v)$ and $(v,\mu^{*}\alpha)\in L_{M}.$\\

We define a Lie algebroid action of $L_N$ on $M$ as follow: \[
\rho_{M}: L_{N} \times_{N} M \too TM, \quad (w,\alpha)\mtoo v.
\]

Thus, if $x$ is a regular value of the map $\mu,$ the induced Dirac structure on $\mu^{-1}(x)$ descends to a Poisson structure on the reduced space $\mu^{-1}(x)/L_{N,x}.$ 

Finally, for a $\lambda$-twisted Dirac structure $(M,L_M)$ and $\gamma \in \Omega^{2}(M),$ the gauge transformation of $L_M$ by $\gamma$ is the $(\lambda - d\gamma)$-twisted Dirac structure $\tau_{\gamma}L_M = \{(v,\alpha + \gamma^{\flat}(v)), (v,\alpha) \in L_{M}\}$ and we say that $L_{M}$ and $\tau_{\gamma}L_M$ are gauge equivalent. For any smooth map $f: M\too N$ between two Dirac manifolds $(M,L_M)$ and $(N,L_N)$ and $\gamma \in \Omega^{2}(N),$ we have \cite{bualibanu2026reduction} \begin{equation}
    f^{*}\tau_{\gamma}L_{N} = \tau_{f^{*}\gamma}f^{*}L_{N} \text{ and } \tau_{\gamma}f_{*}L_M = f_{*}\tau_{f^{*}\gamma}L_M.
\end{equation}
\\

\paragraph{\textbf{Quotient of Dirac structures.}}
Consider a $\lambda$-twisted Dirac structure $(M,L_M).$ Given a vector field $X$, we say that $X$ is \textbf{Dirac} if its flow is a local Dirac diffeomorphism. This happens if and only if the set of the sections $\Gamma(L_M)$ is preserved by the the Lie derivative $\L_{Z}$ and $\L_{Z}\eta_M$ is equal to zero on each non-degenerate leaf. Given a distribution $D\subset TM,$ $D$ is said to be \textbf{Dirac} if every vector field on $D$ extends to a local Dirac vector field: for every $p\in M$ and $v\in T_{p}M,$ there exists a vector field $Z$ defined near $p$ such that \begin{itemize}
    \item $Z_p = v.$
    \item $Z$ is tangent to $D$(i.e., $Z_q \in D_q$ for any $q$ in the domain).
    \item $D$ is a Dirac vector field.
\end{itemize}

We want to construct  a Dirac structure on the reduced space (called also the leaf space) $M/D.$ In general, this quotient is not well defined and does not admit a smooth structure. However, if we suppose that $D$ is an involutive regular Dirac distribution contained in the kernel $\text{Ker}(\eta_M)$ and the quotient $ Q = M/D$ is smooth, Proposition \cite[\text{Proposition 1.12}]{bualibanu2026reduction} states that the Dirac structure on $M$ descends to a Dirac structure on the leaf space. In other terms, $(Q,\pi_{*}L_{M})$ is a $\eta_{Q}$-twisted Dirac manifold where $\pi: M\too Q$ is the canonical projection and $\lambda_{Q}$ is the 3-form determined by the condition $\pi^{*}\lambda_Q = \lambda_M.$

\section{Deformation of twisted-Dirac structures} \label{section 3}

In this section, we treat the general theory of deformation of twisted Dirac structures to Poisson structures. Under specific conditions, it generalizes the deformation theories studied in the case of Hamiltonian quasi-Poisson manifolds and the imploded sections of Hamiltonian and quasi-Hamiltonian manifolds \cite{maiza2026multiplicative}.\\ During this section, we denote by
\begin{enumerate}
\item $(M,L_M,\lambda_M),(X,L_X,\lambda_X)$ are twisted Dirac manifolds.
\item $(N,L_{\pi_{N}}),(Y,L_{\pi_{Y}})$ are Poisson manifolds.
\item $\phi: M \too X$ is a strong Dirac map.
\item $\varphi: N \too Y$ is a Poisson map.
\end{enumerate}

\begin{Def} \label{Dirac def}
\textbf{A Dirac deformation} from the $\lambda_{M}$-twisted Dirac $M$ to the Poisson manifold $N$ is the data of smooth manifolds $\M,\X$, each equipped with a smooth surjective submersion $\pi_\M: \M \too \R$ (respectively $\pi_\X: \X \too \R$); smooth Lie algebroids $\L \subset \mathbb{T}\M$ and $\K \subset \mathbb{T}\X$; a smooth section $\lambda_{\M}$ of $\bigwedge^3 \Ker(d\pi_\M)^*$ closed fiberwise (resp. a smooth section $\lambda_{\X}$ of $\bigwedge^{3} \Ker(d\pi_\X)^*$) and a  smooth map $\widehat{\phi}: \M \too \X$ such that for each $t\in \R$, the restrictions $\M_t \coloneqq \M_{|\pi_{\M}^{-1}(t)}, \X_t \coloneqq \X_{|\pi_{\X}^{-1}(t)},\lambda_{\M,t} \coloneqq (\lambda_{\M})_{|\pi_{\M}^{-1}(t)}, \lambda_{\X,t} \coloneqq (\lambda_{\X})_{|\pi_{\X}^{-1}(t)}, \L_t \coloneqq \L_{|\pi_{\M}^{-1}(t)}$ and $\K_t \coloneqq \K_{|\pi_{\X}^{-1}(t)}$ 
verify the following  \begin{enumerate}
    \item $(\M_t,\L_t,\lambda_{\M,t})$ is a $\lambda_{\M,t}$-twisted Dirac manifold (respectively $(\X_t,\K_t,\lambda_{\K,t})$ a $\lambda_{\X,t}$-twisted Dirac manifold), together with a strong Dirac map $\widehat{\phi}_t: \M_t \too \X_t$ for every $t\neq 0$ and,
    \item $(\M_0,\L_0)$ is a Poisson manifold (resp. $(\X_0,\K_0)$) and a Poisson map $\widehat{\phi}_0: \M_0 \too \X_0$ such that \begin{itemize}
        \item $(\M_1,\L_1) \cong (M,L_M)$ as twisted Dirac manifolds.
        \item $(\M_0,\L_0) \cong (N,L_N)$ as Poisson manifolds.
    \end{itemize}    
\end{enumerate}
\end{Def}

For any compact Lie group $G$, let $ \G = \D(G,\{1\})$ be the deformation space studied in \cite{burelle2026deformations} and the local coordinates of the central fiber \cite[\text{Lemma 2.1}]{burelle2026deformations} given by the map \[
\psi : \g \times \R \too \G, \quad
(x, t) \mtoo
\begin{cases}
(\exp(tx), t) & \text{if } t \ne 0 \\
(x, 0) & \text{if } t = 0
\end{cases}
\]

We consider the fiberwise distribution $\L  \subset \mathbb{T}\G$ defined by \begin{align*}
    \L_{t} &\coloneqq \{((x^{L} - x^{R}), \dfrac{\sigma(x)}{t}), x\in \g\}, \quad t \neq 0,\\
    \L_{0} &\coloneqq L_{\g^{*}}, \quad t = 0.
\end{align*}  

Also, let $\widehat{\eta}_\G$ defined on $G\times \R^{*}$ \begin{align*}
    \eta_{\G,t} = \dfrac{\eta_G}{t}, \quad t\neq 0.
\end{align*}

\begin{Prop} \label{Dirac G}
$\L$ is smooth fiberwise Lagrangian sub-bundle of $\mathbb{T}\G,$ and $\eta_{\G}$ extends to a smooth three-form on $\G$ such that $\eta_{\G,0} = 0.$
\end{Prop}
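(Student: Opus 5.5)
The plan is to work entirely in the local coordinates $\psi\colon \g\times\R\to\G$ near the central fiber. Away from $t=0$ both statements are automatic: $\L_t$ is the Cartan--Dirac structure $L_G$ gauged by rescaling the cotangent part by $1/t$, and $\eta_G/t$ is smooth on $G\times\R^*$. So everything reduces to smooth extension across $t=0$ in the chart $(x,t)\mapsto(\exp(tx),t)$, where $\G$ is identified with $\g\times\R$ near the central fiber.

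\textbf{Step 1 (the Lie algebroid generators).} Fix $\xi\in\g$ and pull back the section $\xi^L-\xi^R$ of $T\G$ along $\psi_t(x)=\exp(tx)$. Writing $x^L-x^R$ at $g=\exp(tx)$ through the derivative of the exponential gives
\[
(d\exp_{tx})^{-1}(\xi^L-\xi^R)=\tfrac1t\,\tfrac{\ad_{tx}}{1-e^{-\ad_{tx}}}\bigl(1-\Ad_{\exp(-tx)}\bigr)\xi.
\]
Here $d\psi_t$ also contributes a factor $t$ in the $\g$-direction, which cancels the $1/t$. The result is the power series $-[x,\xi]+O(t)$, which is an entire function of $t\ad_x$. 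This is smooth in $(x,t)$ and at $t=0$ equals the coadjoint vector field $-\ad_x\xi$, which is the KKS anchor after identifying $\g\cong\g^*$.

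\textbf{Step 2 (the covector part).} Pull back $\sigma(\xi)/t=\tfrac1{2t}(\xi^L+\xi^R)^\vee$. For a tangent vector $v\in T_x\g$, we have $\langle \sigma(\xi),d\exp_{tx}(tv)\rangle/t$. Using $\theta^L(d\exp_{y}w)=\frac{1-e^{-\ad_y}}{\ad_y}w$ and the analogous formula for $\theta^R$, this becomes
\[
\tfrac12\Bigl\langle \xi,\Bigl(\tfrac{1-e^{-\ad_{tx}}}{\ad_{tx}}+\tfrac{e^{\ad_{tx}}-1}{\ad_{tx}}\Bigr)v\Bigr\rangle,
\]
which is again smooth and tends to $\langle\xi,v\rangle$, i.e.\ $\xi^\vee$, as $t\to0$. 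This shows $\psi^*\L$ is spanned, for all $t$, by the smooth frame $\{(A_t(x)\xi,B_t(x)\xi)\}_{\xi\in\g}$ of rank $\dim\g$. At $t=0$ the frame is $\{(-\ad_x\xi,\xi^\vee)\}$, which is exactly $L_{\g^*}$ in the form $\{(\pi^\sharp\alpha,\alpha)\}$. Smoothness of the sub-bundle follows because the frame stays linearly independent: the covector components are injective near $t=0$, since the operator in $B_t$ is close to the identity for small $t|x|$, and on the full fiber $\L_t$ is already a sub-bundle. The Lagrangian property holds fiberwise for $t\neq0$ since rescaling covectors by $1/t$ preserves isotropy of $L_G$ up to the factor $1/t$. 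By continuity of the pairing it also holds at $t=0$ (and $L_{\g^*}$ is Lagrangian anyway).

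\textbf{Step 3 (the 3-form).} Pull back $\eta_G=\frac1{12}\langle\theta^L,[\theta^L,\theta^L]\rangle$ along $\psi_t$. Each $\theta^L$ evaluated on $d\psi_t(v)$ equals $t\frac{1-e^{-\ad_{tx}}}{\ad_{tx}}v$. Cubic homogeneity gives $\psi_t^*\eta_G=t^3\,\Theta_t$, where $\Theta_t$ is smooth in $(x,t)$. Hence $\psi_t^*(\eta_G/t)=t^2\Theta_t$ extends smoothly with value $0$ at $t=0$. Closedness is fiberwise and passes to the limit.

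\textbf{Main obstacle.} The main difficulty is Step 1: one must check carefully that the $1/t$ coming from inverting $d\psi_t$ is exactly cancelled. Concretely, one needs $(1-\Ad_{\exp(-tx)})$ to vanish to first order in $t$ so that the pulled-back anchor stays smooth and gives the correct sign convention for $\pi_{\KKS}$. I would verify this by expanding in powers of $\ad_{tx}$ and matching sign conventions with the identification $\g\cong\g^*$ via $\langle,\rangle_\g$.
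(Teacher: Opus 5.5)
Your proposal is essentially the paper's proof. It uses the same $\psi$-chart, the pulled-back anchor of Lemma~\ref{vect}, the covector part $\langle\xi,\tfrac{\sinh(t\ad_x)}{t\ad_x}(\cdot)\rangle_{\g}$ of Lemma~\ref{lem:cov}, a linearly independent isotropic frame of rank $\dim\g$, and the $t^3$ scaling of $\psi_t^*\eta_G$ of Lemma~\ref{smooth eta}. The one minor difference is in linear independence: you get it from the operator being close to the identity near the central fiber plus the $t\neq 0$ case, while the paper uses that the operator acts as the identity on $\ker\ad_x$.

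One correction to Step~1: your own displayed formula gives exactly $\tfrac1t\ad_{tx}\xi=[x,\xi]$, with no $O(t)$ term and a plus sign. So the cancellation you single out as the main obstacle is just the exact identity $\tfrac{\ad_{tx}}{1-e^{-\ad_{tx}}}\circ(1-e^{-\ad_{tx}})=\ad_{tx}$. It is this plus sign that makes the limiting frame $\{([x,\xi],\xi^\vee)\}$ equal the prescribed $\L_0=L_{\g^*}$ in the paper's convention $\pi_{\mathrm{KKS}}^\sharp(\xi^\vee)|_x=[x,\xi]$.
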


The following technical lemmas are needed for the proof of the proposition. For any $y\in \g$ and $t$ sufficiently small, we denote by $A = \mathrm{ad}_y$ and $\psi_{t}(y) = \exp(ty).$ Finally, let $U$ be the open set on which the $\psi_{|U}$ is a diffeomorphism

\begin{Lem} \label{vect}
For any $t$ sufficiently small (including 0) and for all $x\in \g$, we have $(d_{y}\psi_{t})^{-1}(x^{L}-x^R) = [y,x].$  
\end{Lem}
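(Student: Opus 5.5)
The plan is to deduce the identity from the equivariance of $\psi$ with respect to the adjoint action on $\g$ and the conjugation action on $G$, rather than computing the differential of the exponential map explicitly. The key observation is that for every $g\in G$, $y\in\g$ and $t\neq 0$ we have
\[
g\,\exp(ty)\,g^{-1}=\exp\bigl(t\,\Ad_g y\bigr), \qquad\text{that is,}\qquad \psi_t(\Ad_g y)=g\,\psi_t(y)\,g^{-1}.
\]
So $\psi_t$ intertwines the adjoint action on $\g$ with the conjugation action on $G$. For $t=0$, $\psi_0$ is the identity of $\g$. In the deformation space $\G=\D(G,\{1\})$ of \cite{burelle2026deformations}, the $G$-action on the central fiber is the adjoint action, so the same relation holds trivially there. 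Together these say that $\psi$ is $G$-equivariant on all of $\g\times\R$.

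The first step is to identify $x^L-x^R$ at a point $g\in G$ with the fundamental vector field of conjugation. Take the curve $s\mapsto \exp(-sx)\,g\,\exp(sx)$; in matrix notation its derivative at $s=0$ is $gx-xg=x^L_g-x^R_g$. The second step differentiates the equivariance relation along $g=\exp(-sx)$ at $s=0$. Since
\[
\tfrac{d}{ds}\big|_{s=0}\Ad_{\exp(-sx)}y=-[x,y]=[y,x],
\]
the chain rule gives, for $t\neq 0$,
\[
d_y\psi_t\bigl([y,x]\bigr)=\tfrac{d}{ds}\big|_{s=0}\exp(-sx)\exp(ty)\exp(sx)=x^L-x^R \quad\text{at } \exp(ty).
\]

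At $t=0$, the vector field $x^L-x^R$ on the central fiber is, by construction of the smooth structure on $\G$ via $\psi$, the generator of the adjoint action, namely $y\mapsto[y,x]$. Since $d_y\psi_0=\mathrm{Id}$, the identity holds there as well. I would check this by noting that the generating vector field of the smooth $G$-action on $\G$ is itself smooth. Pulled back by $\psi$, it equals $[y,x]$ for every $t\neq 0$ by the computation above, so by continuity it equals $[y,x]$ at $t=0$ too.

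Finally, to pass from $d_y\psi_t([y,x])=x^L-x^R$ to the stated formula with $(d_y\psi_t)^{-1}$, one needs $d_y\psi_t$ to be invertible. This holds on the open set $U$ where $\psi|_U$ is a diffeomorphism, which contains $\g\times\{0\}$. For fixed $y$ and $|t|$ small, $ty$ stays in the region where $\exp$ is a local diffeomorphism, so $d_y\psi_t=t\,d_{ty}\exp$ is invertible on the fiber. Applying the inverse yields the lemma.

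The main obstacle is the central fiber. One must be careful that "$x^L-x^R$" at $t=0$ means the smooth extension of the conjugation generator, that is, the adjoint generator, and that this extension agrees with what the smooth structure of $\G$ from \cite[Lemma 2.1]{burelle2026deformations} prescribes. The continuity argument above handles this.
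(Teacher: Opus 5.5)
Your proof is correct, but it takes a different route from the paper's.

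\emph{The paper's route.} It computes $d_y\psi_t$ explicitly from the derivative of the exponential map, $d_y\psi_t(v)=t\,dL_{\exp(ty)}\frac{1-e^{-tA}}{tA}(v)$ with $A=\ad_y$. It writes $(x^L-x^R)(\exp(ty))=dL_{\exp(ty)}(1-e^{-tA})x$ in left trivialization. It then inverts, using $\bigl(\frac{1-e^{-tA}}{tA}\bigr)^{-1}(1-e^{-tA})=tA$.

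\emph{Your route.} You use the naturality $\exp(t\Ad_g y)=g\exp(ty)g^{-1}$, i.e.\ $G$-equivariance of the chart $\psi$. Differentiating along $g=\exp(-sx)$ shows that $\psi_t$ sends the adjoint generator $y\mapsto[y,x]$ to the conjugation generator $x^L-x^R$.

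\emph{What your argument buys.} It is coordinate-free and avoids the power series $\frac{1-e^{-z}}{z}$ entirely. It shows that the forward identity $d_y\psi_t([y,x])=x^L-x^R$ holds for every $t\neq 0$; smallness of $t$ is needed only to invert $d_y\psi_t$. It also explains conceptually why the answer is independent of $t$. Finally, it handles the central fiber more carefully. The paper's formula contains $1/t$, so its computation is valid only for $t\neq 0$, and the case $t=0$ is covered only by the remark that the result does not depend on $t$. You instead identify the central-fiber action as the adjoint action and back this up by continuity of the generating vector field of the smooth $G$-action on $\G$.

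\emph{What the paper's computation buys.} Reuse: the same explicit formula for $d_y\psi_t$ is exactly what is needed in Lemmas \ref{lem:cov} and \ref{smooth eta} to pull back $\sigma(x)$ and $\eta_G$. There, equivariance alone would not suffice, since it determines $d_y\psi_t$ only on the orbit directions $[y,\g]$.
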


\begin{proof}
The differential of the exponential map gives \[ d_{y}\psi_{t}(v) = d_{y}\exp(tv) = dL_{\exp(ty)} \circ \dfrac{1- e^{-tA}}{tA}(tv) = tdL_{\exp(ty)}\circ \dfrac{1 - e^{-tA}}{tA}(v),\] yielding \begin{equation} \label{eq 1}
    (d_{y}\psi_{t})^{-1}(w) = \dfrac{1}{t} (\dfrac{1 - e^{-tA}}{tA})^{-1} \circ dL_{\exp(-ty)}(w).
\end{equation}
Using Left trivialization, we get $(x^{L} - x^{R})(g) = dL_{g}(x - \Ad_{g^{-1}}x).$ At $g = \exp(ty):$ \[
(x^{L} - x^{R})(g) = dL_{\exp(ty)}((1 - e^{-tA})(x)). 
\]

Substituting this in \ref{eq 1}, one gets \[
(d_{y}\psi_{t})^{-1}(x^L - x^R) = \dfrac{1}{t} (\dfrac{1 - e^{-tA}}{tA})^{-1} (1 - e^{-tA})(x) = \dfrac{1}{t}tA(x) = A(x) = [y,x],
\]
since $(\dfrac{1 - e^{-tA}}{tA})^{-1}(1 - e^{-tA}) = tA$ is an identity in $\text{End}(\g).$ As a result, $[y,x]$ is independent of $t$ and polynomial in $y$; it is smooth on $U.$
\end{proof}

\begin{Lem}[1-form component]\label{lem:cov}
Define, for each $x \in \g$,
\[
\alpha_x(y,t) := \frac{1}{t}\,\psi_t^*\sigma(x)\big|_y \in T_y^*\g, \qquad t \neq 0.
\]
Then $\alpha_x$ extends smoothly to $t = 0$, with
\[
\alpha_x(y,t)(v) = \left\langle x,\, \frac{\sinh(tA)}{tA}(v)\right\rangle_\g
\]
for all $(y,t) \in \g \times \R$ and $v \in T_y\g \cong \g$. In particular, $\alpha_x(y,0)(v) = \langle x, v\rangle_\g = x^\vee(v)$.
\end{Lem}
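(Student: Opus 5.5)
The plan is to compute $\psi_t^*\sigma(x)$ explicitly at $y$, using the left-trivialized differential of the exponential map already used in the proof of Lemma \ref{vect}, and then read off the factor of $t$.

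\textbf{Step 1: unpacking $\sigma(x)$.} Since $\langle\cdot,\cdot\rangle_\g$ is $\Ad$-invariant, it extends to a bi-invariant metric on $G$. Under this identification $T^*G\cong TG$, for $w\in T_gG$ we have $x^{L\vee}(w)=\langle x,\theta^L(w)\rangle_\g$ and $x^{R\vee}(w)=\langle x,\theta^R(w)\rangle_\g$. Hence
\[
\sigma(x)(w)=\tfrac12\langle x,\theta^L(w)+\theta^R(w)\rangle_\g .
\]

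\textbf{Step 2: evaluating on $w=d_y\psi_t(v)$.} Put $g=\exp(ty)$. From the computation in Lemma \ref{vect}, $d_y\psi_t(v)=dL_g\,\frac{1-e^{-tA}}{A}(v)$, so
\[
\theta^L(w)=\frac{1-e^{-tA}}{A}(v),\qquad \theta^R(w)=\Ad_g\theta^L(w)=e^{tA}\frac{1-e^{-tA}}{A}(v)=\frac{e^{tA}-1}{A}(v).
\]
Here $\frac{1-e^{-tA}}{A}$ and $\frac{e^{tA}-1}{A}$ denote the entire power series in $A$, so no invertibility of $A$ is needed. Averaging gives $\frac{e^{tA}-e^{-tA}}{2A}(v)=\frac{\sinh(tA)}{A}(v)$. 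Dividing by $t$ yields, for $t\neq0$,
\[
\alpha_x(y,t)(v)=\Big\langle x,\frac{\sinh(tA)}{tA}(v)\Big\rangle_\g .
\]

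\textbf{Step 3: smooth extension.} The function $z\mapsto \sinh(z)/z=\sum_k z^{2k}/(2k+1)!$ is entire. Therefore $(y,t)\mapsto \frac{\sinh(t\,\ad_y)}{t\,\ad_y}\in\mathrm{End}(\g)$ is a convergent power series in $t\,\ad_y$ with coefficients polynomial in $y$, and it is smooth (indeed real-analytic) on all of $\g\times\R$. This gives the smooth extension of $\alpha_x$ across $t=0$, with value $\mathrm{Id}$ there, so $\alpha_x(y,0)=x^\vee$.

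The only delicate point is the sign and side conventions in Step 2, namely whether $\theta^R(w)=\Ad_g\theta^L(w)$ and whether $e^{tA}$ or $e^{-tA}$ appears. The symmetric combination in $\sigma$ makes the result independent of these choices: both conventions produce the same $\sinh$.
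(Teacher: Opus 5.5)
Your proof is correct and follows the paper's computation: you evaluate $\sigma(x)$ on the left-trivialized $d_y\psi_t(v)=dL_{\exp(ty)}\frac{1-e^{-tA}}{A}(v)$, combine the two halves into $\sinh(tA)/A$, and conclude using the entire function $\sinh(z)/z$. The only cosmetic difference is that you move $\Ad_{\exp(ty)}$ onto the vector via $\theta^R=\Ad_g\theta^L$, whereas the paper puts $\Ad_{g^{-1}}$ on $x$ and transfers it with $\Ad$-invariance.

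One small caveat concerns your closing remark on conventions. Because $\sinh(tA)/A$ is even in $A$, the result survives a consistent global flip $A\mapsto -A$. It does not survive a partial flip: using $\Ad_{g^{-1}}$ together with the left-trivialized differential would give $e^{-tA}\sinh(tA)/A$. So Step~2 genuinely relies on $\theta^R=\Ad_g\theta^L$, which you used correctly.
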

 
\begin{proof}
At $g = \exp(ty)$, the 1-form $\sigma(x)$ evaluated on a tangent vector $w \in T_gG$ is (using left-trivialization)
\[
\sigma(x)_g(w) = \tfrac{1}{2}\big\langle x + \Ad_{g^{-1}} x,\; dL_{g^{-1}} w \big\rangle_\g = \tfrac{1}{2}\big\langle (1 + e^{-tA})x,\; dL_{\exp(-ty)} w \big\rangle_\g.
\]
Pulling back via $\psi_t$ with $d\psi_t|_y(v) = t\, dL_{\exp(ty)} \circ \frac{1-e^{-tA}}{tA}(v)$:
\[
\psi_t^*\sigma(x)\big|_y(v) = \tfrac{1}{2}\Big\langle (1 + e^{-tA})x,\;\frac{1-e^{-tA}}{A}(v)\Big\rangle_\g,
\]
since $dL_{\exp(-ty)} \circ dL_{\exp(ty)} = \mathrm{Id}$ and $t \cdot \frac{1-e^{-tA}}{tA} = \frac{1-e^{-tA}}{A}$.
 
Using the $\Ad$-invariance of $\langle\cdot,\cdot\rangle_\g$ and the skewness of $\ad_y$, namely $\langle e^{-tA}x, w\rangle = \langle x, e^{tA}w\rangle$:
\begin{align*}
\psi_t^*\sigma(x)\big|_y(v)
&= \tfrac{1}{2}\Big\langle x,\; (1 + e^{tA})\frac{1-e^{-tA}}{A}(v)\Big\rangle_\g.
\end{align*}
Now observe the identity
\[
(1 + e^{tA})(1 - e^{-tA}) = e^{tA} - e^{-tA} = 2\sinh(tA),
\]
so that
\[
\psi_t^*\sigma(x)\big|_y(v) = \Big\langle x,\;\frac{\sinh(tA)}{A}(v)\Big\rangle_\g.
\]
Dividing by $t$:
\[
\alpha_x(y,t)(v) = \frac{1}{t}\,\psi_t^*\sigma(x)\big|_y(v) = \Big\langle x,\;\frac{\sinh(tA)}{tA}(v)\Big\rangle_\g.
\]
The function $z \mapsto \frac{\sinh(z)}{z} = \sum_{k=0}^\infty \frac{z^{2k}}{(2k+1)!}$ is entire, so $\frac{\sinh(tA)}{tA} = \sum_{k=0}^\infty \frac{t^{2k} A^{2k}}{(2k+1)!}$ is a smooth (in fact analytic) function of $(y,t)$ with values in $\operatorname{End}(\g)$. At $t=0$:
\[
\frac{\sinh(tA)}{tA}\Big|_{t=0} = \mathrm{Id}_\g,
\]
and therefore $\alpha_x(y,0)(v) = \langle x, v\rangle_\g = x^\vee(v)$.
\end{proof}
 
To prove that $\L$ is smooth, for each $x\in \g$, define the section of the fiber-wise generalized tangent bundle $\mathbb{T}_{\mathrm{vert}}\mathcal{G}|_{(y,t)}= T_y\g \oplus T_y^*\g$:
\begin{equation}\label{eq:spanning}
s_x(y,t) := \Big([y,x],\;\Big\langle x, \frac{\sinh(t\,\ad_y)}{t\,\ad_y}(\cdot)\Big\rangle_\g\Big) \in T_y\g \oplus T_y^*\g.
\end{equation}

\begin{Lem} \label{sections s}
The sections $s_x$ are smooth on $g\times \R.$ Moreover, $\{s_x\}_{x \in \g}$ are everywhere linearly independent as sections of $\mathbb{T}_{\mathrm{vert}}\mathcal{G}$, ensuring constant rank $n = \dim \g$. 
\end{Lem}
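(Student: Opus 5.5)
Smoothness is the easier half, and I will dispose of it first. The vector component $[y,x]$ is polynomial (indeed bilinear) in $(y,x)$ and does not involve $t$, so it is smooth on $\g\times\R$; this is also the content of Lemma \ref{vect}. For the covector component I would reuse the power series from Lemma \ref{lem:cov}:
\[
\frac{\sinh(t\,\ad_y)}{t\,\ad_y}=\sum_{k\ge 0}\frac{t^{2k}\,\ad_y^{2k}}{(2k+1)!}.
\]
This series converges in $\operatorname{End}(\g)$, locally uniformly in $(y,t)$ together with all derivatives, because it is an entire function of $t\,\ad_y$ and $\ad_y$ depends linearly on $y$. The covector $v\mapsto\langle x,\frac{\sinh(t\ad_y)}{t\ad_y}v\rangle_\g$ is therefore real-analytic in $(y,t)$ and linear in $x$. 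Hence each $s_x$ is a smooth section, and $x\mapsto s_x$ is a linear map from $\g$ into the sections.

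For linear independence I will show that the pointwise map $x\mapsto s_x(y,t)$ is injective at every $(y,t)$. Suppose $s_x(y,t)=0$. Then the cotangent component vanishes, so
\[
\Big\langle \Big(\frac{\sinh(t\ad_y)}{t\ad_y}\Big)^{\!T}x,\;v\Big\rangle_\g=0 \quad\text{for all } v,
\]
where $^T$ is the transpose with respect to $\langle\cdot,\cdot\rangle_\g$. Since $\ad_y$ is skew, the transpose is $\frac{\sinh(-t\ad_y)}{-t\ad_y}=\frac{\sinh(t\ad_y)}{t\ad_y}$, because the function is even. It therefore suffices to show this operator is invertible. The group is compact and the inner product is $\Ad$-invariant, so $\ad_y$ is skew-symmetric for a positive definite form and has purely imaginary eigenvalues $i\theta$. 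The eigenvalues of $\frac{\sinh(tA)}{tA}$ are then $\frac{\sinh(it\theta)}{it\theta}=\frac{\sin(t\theta)}{t\theta}$, with value $1$ at $\theta=0$. These vanish exactly when $t\theta\in\pi\mathbb{Z}\setminus\{0\}$.

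This is the main obstacle: invertibility fails for large $t\,\ad_y$. I would resolve it by restricting to the chart domain $U$ where $\psi$ is a diffeomorphism, which is the setting the lemma lives in. On $U$ we have $|t\theta|<\pi$ for every eigenvalue, since this is the standard injectivity radius condition for $\exp$ on a compact group: $\exp$ is a local diffeomorphism at $ty$ exactly when $\frac{1-e^{-tA}}{tA}$ is invertible, i.e. when $t\theta\notin 2\pi\mathbb{Z}\setminus\{0\}$. On $U$ the eigenvalues $\frac{\sin(t\theta)}{t\theta}$ are therefore nonzero, the operator is invertible, and $x=0$.

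If one instead wants independence beyond this region, I would fall back on the vector component. At a point where the operator is singular, one has $x\in\ker\frac{\sin}{\cdot}(t\ad_y)$, which lies in the sum of eigenspaces with $\theta\neq0$, so $[y,x]\neq0$ there. Combining the two components therefore forces $x=0$ in either case. This gives pointwise injectivity of an $n$-dimensional linear family, hence constant rank $n=\dim\g$.
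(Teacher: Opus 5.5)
Your final argument is correct and is essentially the paper's. The paper uses the vector component to get $x\in\ker\ad_y$, where $\frac{\sinh(t\ad_y)}{t\ad_y}$ acts as the identity, so the covector component reduces to $\langle x,\cdot\rangle_\g$ and forces $x=0$. Your fallback uses the same fact from the other side: $\ker\frac{\sinh(t\ad_y)}{t\ad_y}\subset(\ker\ad_y)^\perp$, and $\ad_y$ is injective there. Either way, the argument works at every $(y,t)\in\g\times\R$ with no case distinction.

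However, the intermediate claim that $|t\theta|<\pi$ on the chart domain $U$ is false and should be dropped. The criterion you quote only gives $t\theta\notin 2\pi\mathbb{Z}\setminus\{0\}$, that is, $|t\theta|<2\pi$ on a star-shaped domain, not $|t\theta|<\pi$. The operator really does become singular inside the region where $\exp$ is a diffeomorphism. For example, take $G=SU(2)$ and $ty=\operatorname{diag}(i\pi/2,-i\pi/2)$, so $g=\exp(ty)=\operatorname{diag}(i,-i)$. Then $\Ad_g x=-x$ on the root directions, so $\sigma(x)_g=0$ there, while $x^L-x^R=dL_g(2x)\neq 0$. So the covector component alone is not injective even on the natural chart domain. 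The two-component argument is therefore not an optional fallback but the actual proof. In any case, the lemma asserts independence on all of $\g\times\R$, so an argument restricted to $U$ would not have proved it.
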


\medskip

\begin{proof}

By Lemmas \ref{vect} and \ref{lem:cov}, $s_x$ is smooth on $\g \times \R.$
At $t \neq 0,$ the sections $\{s_x\}_{x \in \g}$ are exactly the pullbacks of $\{(x^L - x^R, \sigma(x)/t)\}_{x \in \g}$ via $\psi_t$, which span the (rescaled) Cartan--Dirac structure $\mathcal{L}_t$ on $\mathcal{G}_t$. For $t = 0,$ we have $s_x(y,0) = ([y,x], x^\vee)$. As $x$ ranges over $\g$, these sections span precisely the Poisson--Dirac structure $L_{\g^*} = \{(\pi_{\mathrm{KKS}}^\sharp(\beta),\beta) : \beta \in T^*\g\}$, since $\pi_{\mathrm{KKS}}^\sharp(x^\vee)\big|_y = [y,x]$ under the identification $\g \cong \g^*$ via $\langle\cdot,\cdot\rangle_\g$.\\

It remains to verify that $\{s_x\}_{x \in \g}$ are everywhere linearly independent as sections of $\mathbb{T}_{\mathrm{vert}}\mathcal{G}$, ensuring constant rank $n = \dim \g$. In other words, for every $(y,t) \in \g \times \R$, the map $x \mapsto s_x(y,t)$ is injective. Indeed, suppose $s_x(y,t) = 0.$ We have $[y,x] = 0$ i.e., $x \in \ker(A) = \mathfrak{z}_{\g}(y)$ the centralizer of $y\in \g$ and $\langle x, \frac{\sinh(t\ad_y)}{t\ad_y}(v)\rangle = 0$ for all $v \in \g$. Write $\g = \Ker(A)\oplus \Ker(A)^{\perp}.$ This decomposition is preserved by $\frac{\sinh(t\ad_y)}{t\ad_y}(v)$ since $A = \ad_y$ does. Restricted to $\Ker(A),$ the operator $\frac{\sinh(t\ad_y)}{t\ad_y}(v)$ acts exactly as the identity (since $\frac{sinh(z)}{z}$ goes to 1 as $z$ goes to $0$). Then $\langle x,v_{\Ker(A)} \rangle_{\g} = 0$ for any $v.$ In particular, for $v_{\Ker(A)} = x,$ one gets $\langle x,x \rangle_{\g} = 0.$ Thus $x = 0.$\\

We also verify the Lagrangian condition. The natural pairing on $T_y\g \oplus T_y^*\g$ is $\langle(v,\alpha),(w,\beta)\rangle = \alpha(w) + \beta(v)$. For $x, x' \in \g$:
\begin{align*}
\langle s_x, s_{x'}\rangle
&= \Big\langle x, \frac{\sinh(tA)}{tA}([y,x'])\Big\rangle + \Big\langle x', \frac{\sinh(tA)}{tA}([y,x])\Big\rangle.
\end{align*}
Using that $\frac{\sinh(tA)}{tA} \circ A = \frac{\sinh(tA)}{t}$ and that $\langle x, \sinh(tA)(x')\rangle = -\langle \sinh(tA)(x), x'\rangle$ (since $A = \ad_y$ is skew-symmetric and $\sinh$ is odd), we get
\begin{align*}
\langle s_x, s_{x'}\rangle
&= \frac{1}{t}\Big[\langle x, \sinh(tA)(x')\rangle + \langle x', \sinh(tA)(x)\rangle\Big] \\
&= \frac{1}{t}\Big[\langle x, \sinh(tA)(x')\rangle - \langle x, \sinh(tA)(x')\rangle\Big] = 0.
\end{align*}

At $t = 0,$ $s_{x}(y,0) = \langle [y,x],x^\vee \rangle_{\g}.$ Then \[
\langle s_x,s_{x^{'}}\rangle_{|t = 0} = x^{\vee}([y,x^{'}]) + (x^{'})^{\vee}([y,x]) = \langle x,[y,x^{'}] \rangle_{\g} + \langle x^{'},[y,x]\rangle_{\g} = \langle [x,y],x^{'} \rangle_{\g} - \langle x^{'},[x,y]\rangle_{\g} = 0.
\]

Hence $\mathcal{L}$ is isotropic. Since $\operatorname{rank}(\mathcal{L}) = n = \frac{1}{2}\dim(T_y\g \oplus T_y^*\g)$, it is Lagrangian.

\end{proof}
 
\begin{Lem}(Smoothness of $\eta_{\G}$) \label{smooth eta}
The 3-form $\eta_{\G}$ extends smoothly to the deformation space $\G$ such that $\eta_{\G,0} = 0$
\end{Lem}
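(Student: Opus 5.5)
Work in the central-fiber chart $\psi\colon U\subset \g\times\R\to\G$ and compute the pullback $\psi_t^*(\eta_G/t)$ explicitly, as in Lemma \ref{lem:cov}. Away from $t=0$, $\G$ is $G\times\R^*$ and $\eta_G/t$ is already smooth and fiberwise closed. So the only issue is to show that $\frac{1}{t}\psi_t^*\eta_G$, viewed as a section of $\bigwedge^3 T^*\g$ over $U\cap\{t\neq0\}$, extends smoothly across $t=0$ with value $0$.

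Use the left-trivialized formula $\eta_G = \frac{1}{12}\langle \theta^L,[\theta^L,\theta^L]\rangle_\g$, so that at $g$ and for $w_i\in T_gG$,
\[
\eta_G(w_1,w_2,w_3)=\tfrac12\langle \theta^L(w_1),[\theta^L(w_2),\theta^L(w_3)]\rangle_\g
\]
(up to the paper's normalization constant, which plays no role). From the proof of Lemma \ref{vect}, at $g=\exp(ty)$ we have $\theta^L(d_y\psi_t(v)) = t\,\frac{1-e^{-tA}}{tA}(v) = \frac{1-e^{-tA}}{A}(v)$ with $A=\ad_y$. Hence
\[
\tfrac1t\,\psi_t^*\eta_G\big|_y(v_1,v_2,v_3)=\tfrac{t^2}{2}\Big\langle F_t(v_1),[F_t(v_2),F_t(v_3)]\Big\rangle_\g,\qquad F_t:=\frac{1-e^{-tA}}{tA}.
\]
The function $z\mapsto (1-e^{-z})/z$ is entire, so $F_t$ is an analytic $\operatorname{End}(\g)$-valued function of $(y,t)$ on all of $\g\times\R$, with $F_0=\mathrm{Id}$. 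The right-hand side is therefore smooth on $U$, including at $t=0$. Because of the prefactor $t^2$ it vanishes at $t=0$; in fact the extension is $O(t^2)$. This extension agrees with $\eta_G/t$ on the generic fibers, so it is the required smooth extension with $\eta_{\G,0}=0$.

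It remains to check that the extended form is a fiberwise closed section of $\bigwedge^3\Ker(d\pi_\G)^*$ and that it is globally well defined. Closedness on $t\neq0$ is inherited from $d\eta_G=0$. At $t=0$ it holds trivially, since the form is zero; alternatively it follows by continuity, because $d_{\mathrm{vert}}$ commutes with taking limits of smooth families. Global well-definedness holds because $\psi$ only modifies a neighborhood of the central fiber and agrees with the identification $G\times\R^*$ elsewhere.

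The main obstacle is the bookkeeping of the scaling. One must check that the factor $t^3$ coming from the three copies of $d\psi_t = t\,dL\circ F_t$ beats the $1/t$, and that $F_t$ stays smooth, without any hidden singularity, when $\ad_y$ has nontrivial kernel. I would handle the kernel issue by working with the power series of $F_t$, not with the quotient by $A$.
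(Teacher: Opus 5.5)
Your proposal is correct and is essentially the paper's proof. You pull back $\eta_G$ through the chart $\psi_t$ using $d_y\psi_t = t\,dL_{\exp(ty)}\circ\frac{1-e^{-tA}}{tA}$, note that the three factors of $t$ give $\frac{1}{t}\psi_t^*\eta_G = O(t^2)$ with an analytic coefficient, and conclude $\eta_{\G,0}=0$. Your additional checks of fiberwise closedness and of gluing with $G\times\R^*$, and the normalization constant $\tfrac{1}{2}$ versus the paper's $\tfrac{1}{12}$, are harmless details that the paper leaves implicit.
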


\begin{proof}
The Cartan 3-form $\eta_G$ can be expressed in left-trivialization as
\[
(\eta_{G})_g(u_1, u_2, u_3) = \tfrac{1}{12}\big\langle [dL_{g^{-1}}u_1, dL_{g^{-1}}u_2], dL_{g^{-1}}u_3\big\rangle_\g.
\]
Writing $d\psi_t|_y(v) = t\,dL_{\exp(ty)} \circ \frac{1 - e^{-tA}}{tA}(v)$ and denoting $S_t := \frac{1-e^{-tA}}{tA}$, we compute:
\begin{align}
\psi_t^*\eta_{G}\big|_y(v_1,v_2,v_3)
&= \varphi_{\exp(ty)}\big(d\psi_t(v_1), d\psi_t(v_2), d\psi_t(v_3)\big) \notag\\
&= \frac{t^3}{12}\big\langle [S_t(v_1), S_t(v_2)], S_t(v_3)\big\rangle_\g. \label{eq:pullback-phi}
\end{align}
Since $S_t = \mathrm{Id} - \frac{tA}{2} + \frac{t^2A^2}{6} - \cdots$ is smooth in $(y,t)$, the pullback $\psi_t^*\varphi = O(t^3)$ as $t \to 0$. In the deformation space, the fiberwise 3-form is
\[
\eta_{\mathcal{G},t} := \frac{1}{t}\,\psi_t^*\eta_{G}\big|_y(v_1,v_2,v_3) = \frac{t^2}{12}\big\langle[S_t(v_1),S_t(v_2)],S_t(v_3)\big\rangle_\g.
\]
This is manifestly smooth in $(y,t)$ (the factor $t^3/t = t^2$ removes the singularity), and at $t=0$:
\[
\eta_{\mathcal{G},0} = \frac{0}{12}\big\langle [v_1, v_2], v_3 \big\rangle_\g = 0.
\]
This completes the proof that $\eta_\mathcal{G}$ extends smoothly with $\eta_{\mathcal{G},0} = 0$, consistent with the fact that $(\g^*, \pi_{\mathrm{KKS}})$ is an untwisted Dirac manifold with trivial kernel, hence a Poisson manifold.
\end{proof}

\begin{proof}(of Proposition \ref{Dirac G})
This follows immediately from Lemmas \ref{vect}, \ref{lem:cov} and \ref{smooth eta}.
\end{proof}

\subsection{Dirac deformation as a generalization of Poisson deformation of quasi-Poisson manifolds} In this subsection, we prove that the Dirac deformation theory generalizes strictly the Poisson deformation of Hamiltonian quasi-Poisson manifolds. We show that the Poisson deformation in the sense of \cite[\text{Definition 2.1}]{maiza2026multiplicative} give rise to a Dirac deformation in the sense of \ref{Dirac def}, and that there exist Dirac deformations outside the range of quasi-Poisson manifolds.

\begin{Def}\cite[Definition 2.1]{maiza2026multiplicative} \label{Poisson def}
A \emph{Poisson deformation} of a Hamiltonian quasi-Poisson $G$--manifold $(M,P,\phi_{M},\mu)$ to a Hamiltonian Poisson $G$--manifold $(N,P_{0},\nu)$ consists of a smooth manifold $\mathcal{QD}$ equipped with:
\begin{enumerate}
\item a $G$--invariant smooth submersion $\pi:\mathcal{QD}\to[0,1]\subset \R$,
\item a smooth $G$--action on $\mathcal{QD}$,
\item a smooth $G$--equivariant map $\tilde{\mu}:\mathcal{QD}\to\mathcal{G}$ satisfying $\pi_{\mathcal{G}}\circ\tilde{\mu}=\pi$,
\item a smooth section $\tilde{P}$ of $\bigwedge^{2}(\ker d\pi)\to\mathcal{D}$,
\item a smooth section $\phi_{\mathcal{QD}}$ of $\bigwedge^{3}(\ker d\pi)\to\mathcal{D}$.
\end{enumerate}
Writing $\mathcal{QD}_{t}=\pi^{-1}(t)$ for the fiber over $t\in(0,1]$, with $\tilde{P}_{t}$, $\phi_{t}$, $\tilde{\mu}_{t}$ denoting the restricted data, we require:
\begin{itemize}
\item $(\mathcal{QD}_{t},\tilde{P}_{t},\phi_{t},\tilde{\mu}_{t})\cong (M,tP,t^{2}\phi_{M},\mu)$ as Hamiltonian quasi-Poisson $G$--manifolds,
\item $(\mathcal{QD}_{0},\tilde{P}_{0},\nu)\cong (N,P_{0},\mu_{0})$ as Hamiltonian Poisson $G$--manifolds, with $\phi_{0}=0$.
\end{itemize}
In particular, $\mathcal{QD}_{1}\cong M$ and $\mathcal{QD}_{0}\cong N$.
\end{Def}

\begin{Thm}(The embedding theorem) \label{embed thm}
Let $(M,P,\phi_M,\mu)$ be a Hamiltonian $G$ quasi-Poisson manifold that admits a Poisson deformation $(\QD,\tilde{\mu},\tilde{P},\phi_{\QD},\pi)$ in the sense of Definition \ref{Poisson def} to a Hamiltonian $G$-Poisson manifold $(N,P_0,\nu).$ Then, there exists a Dirac deformation $(\M,\G,\L,\K,\lambda_{\M},\lambda_{\G}, \widehat{\phi})$ in the sense of Definition \ref{Dirac def}, with $\M = \QD, \X = \G$ and $\widehat{\phi} = \tilde{\mu}.$
\end{Thm}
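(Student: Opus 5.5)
We construct the Dirac deformation fiberwise from the Bursztyn--Crainic correspondence and check that it extends smoothly across $t=0$. On the target we take $\X=\G$ with $\K=\L$ and $\lambda_\X=\eta_\G$ as in Proposition \ref{Dirac G}, which already gives a smooth fiberwise Lagrangian family with $\K_t$ the rescaled Cartan--Dirac structure for $t\neq0$ and $\K_0=L_{\g^*}$. On $\M=\QD$ we define, for $t\neq 0$, the Dirac structure attached by \cite[Theorem 3.16]{bursztyn2007dirac} to the Hamiltonian quasi-Poisson manifold $(\QD_t,\tilde P_t,\phi_t,\tilde\mu_t)\cong(M,tP,t^2\phi_M,\mu)$, rescaled in the cotangent direction so that it matches $\K_t$. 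Concretely, in the coordinates $\psi$ of $\G$ we set
\[
\L_t=\Big\{\big(\tilde P_t^\sharp(\alpha)+x_{\QD_t},\ C_t^*(\alpha)+\tfrac1t\tilde\mu_t^*\sigma(x)\big)\ :\ \alpha\in T^*\QD_t,\ x\in\g\Big\},
\qquad \lambda_{\M,t}=\tilde\mu_t^*\eta_{\G,t}.
\]
For $t=0$ we take $\L_0=L_{\tilde P_0}$ and $\lambda_{\M,0}=0$. The rescaling by $1/t$ is exactly the gauge and scaling that turns the $(tP,t^2\phi)$ data into a genuine twisted Dirac structure compatible with $\eta_G/t$. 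We check this by a direct scaling argument: the Dirac structure attached to $(M,tP,\mu)$ for the group with rescaled metric $\langle\cdot,\cdot\rangle/t$ is the image of the original one under $(v,\alpha)\mapsto(v,\alpha/t)$ composed with the appropriate reparametrization of $x$. Hence $\L_t$ is Lagrangian and $\lambda_{\M,t}$-twisted for $t\neq0$. The strong Dirac property of $\tilde\mu_t$ is then the Bursztyn--Crainic statement that the moment map is a Dirac realization of $L_G$, transported by these scalings.

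The main obstacle is smoothness of $\L$ at $t=0$, with constant rank. We write the spanning sections $(\alpha,x)\mapsto(\tilde P^\sharp(\alpha)+x_{\QD},\,C_t^*\alpha+\frac1t\tilde\mu^*\sigma(x))$ and analyze each term in the local chart $\psi$.
\begin{enumerate}
\item The term $\frac1t\tilde\mu^*\sigma(x)=\tilde\mu^*\alpha_x$ is smooth by Lemma \ref{lem:cov}, with limit $\tilde\mu_0^*x^\vee=d\langle\nu,x\rangle$.
\item In $C_t=\mathrm{Id}-\frac12 e_{a}\otimes\tilde\mu_t^*(\theta^L_a-\theta^R_a)$, the pullback of $\theta^L-\theta^R$ by $\psi_t$ is $O(t^2)$, since $d\psi_t=O(t)$ and $\theta^L-\theta^R$ vanishes at $e$ to first order along $\exp(ty)$. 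So $C_t\to\mathrm{Id}$ smoothly.
\end{enumerate}
Hence the map $T^*\QD\oplus\g\to\mathbb T_{\mathrm{vert}}\QD$ above is smooth in $t$, and at $t=0$ it becomes $(\alpha,x)\mapsto(\tilde P_0^\sharp\alpha+x_N,\ \alpha+d\langle\nu,x\rangle)$. Because $\nu$ is a moment map, $x_N=\tilde P_0^\sharp d\langle\nu,x\rangle$, so the image at $t=0$ is exactly the graph $L_{\tilde P_0}$.

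The remaining work is to show that the image bundle is smooth of constant rank $\dim\QD_t$. We choose a smooth complement: the sub-family obtained by restricting to $x=0$ and replacing $\alpha$ by $C_t^{*-1}\alpha$, which is defined for small $t$ since $C_t\to\mathrm{Id}$, already has rank $\dim\QD_t$. We then check, by a Lagrangian dimension count, that it coincides with $\L_t$ near $t=0$, using that every $x$-generator equals an $\alpha$-generator plus kernel terms (the Bursztyn--Crainic description of $L$ as the image of $T^*M\oplus\g$ with kernel $\{(-\tilde\mu^*\sigma(x)\text{-correction},x)\}$). If this identification fails, we would instead prove directly that the family of sections $(\alpha,x)$ has a kernel which is a smooth rank-$\dim\g$ subbundle, extending over $t=0$ as $\{(-d\langle\nu,x\rangle,x)\}$.

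Closedness of $\lambda_\M$ fiberwise is immediate from Lemma \ref{smooth eta}. At $t=0$, $\tilde\mu_0=\nu$ is a Poisson map to $(\g^*,\pi_{\KKS})$ because $\nu$ is an equivariant moment map, and $\L_0$ is Poisson. This completes the verification of Definition \ref{Dirac def}.
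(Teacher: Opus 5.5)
Your proposal is correct and is essentially the paper's proof via Lemma \ref{embd lem}. Both use the Bursztyn--Crainic structure with $\sigma(x)$ rescaled by $1/t$, termwise smoothness from Lemma \ref{lem:cov} together with $C_t\to\mathrm{Id}$, and the moment-map identity $x_N=\tilde P_0^\sharp d\langle\nu,x\rangle$ to identify the $t=0$ limit with $L_{\tilde P_0}$; both then take $\lambda_{\M}=\tilde\mu^*\eta_\G$ and inherit the strong Dirac property from Bursztyn--Crainic.

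You also make explicit two points the paper only asserts. First, the rescaling $(v,\alpha)\mapsto(v,\alpha/t)$ shows that $\L_t$ is genuinely $\tilde\mu_t^*\eta_{\G,t}$-twisted; note the reparametrization needed is $\alpha\mapsto t\alpha$, not one of $x$. Second, near $\QD_0$ the generators $(\tilde P_t^\sharp\alpha,C_t^*\alpha)$ already span $\L_t$ by a dimension count, since $C_t$ is invertible there, so your fallback argument is unnecessary.
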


The following lemma will be useful for the proof of the embedding theorem \ref{embed thm}.

\begin{Lem} \label{embd lem}
For each $t \in (0,1],$ the Hamiltonian quasi-Poisson isomorphism $(\QD_t,\tilde{P}_t,\phi_t,\tilde{\mu}_t) \cong (M,tP,t^{2}\phi_M,\mu)$ induces a $\tilde{\mu}_{t}^{*}\eta_{\G,t}$-twisted Dirac structure $\L_t$ via the construction \ref{Dirac qp} that extends to a smooth Lagrangian sub-bundle $\L \subset \mathbb{T}^{\text{vect}}\QD$ with $\L_0 = L_{P_0}$
\end{Lem}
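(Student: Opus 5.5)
The plan is to write down $\L_t$ explicitly via the Bursztyn--Crainic construction, rescale it in the same way as the fundamental example of Proposition \ref{Dirac G}, and check that the resulting spanning sections extend smoothly across $t=0$, reducing to $L_{P_0}$ there.

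\textbf{Step 1 (the Dirac structure on $\QD_t$).} Fix $t\in(0,1]$ and apply construction \ref{Dirac qp} to $(\QD_t,\tilde P_t,\phi_t,\tilde\mu_t)\cong(M,tP,t^2\phi_M,\mu)$. One has to be careful with normalizations. Rescaling the bivector by $t$ and the trivector by $t^2$ corresponds to rescaling the scalar product on $\g$ by $1/t$. Therefore the Bursztyn--Crainic data $\sigma$, $C$ and $\eta_G$ must all be taken with respect to $\langle\cdot,\cdot\rangle_\g/t$. This gives
\[
\L_t=\Big\{\big(\tilde P_t^\sharp(\alpha)+x_{\QD_t},\ C_t^*(\alpha)+\tfrac1t\tilde\mu_t^*\sigma(x)\big):\ \alpha\in T^*\QD_t,\ x\in\g\Big\},
\]
where $C_t=\mathrm{Id}-\tfrac{1}{2t}e_{a,\QD}\otimes\tilde\mu_t^*(\theta^L_a-\theta^R_a)$. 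By \cite[Theorem 3.16]{bursztyn2007dirac}, $\L_t$ is a $\tilde\mu_t^*\eta_{\G,t}$-twisted Dirac structure, with $\eta_{\G,t}=\eta_G/t$ as in Lemma \ref{smooth eta}.

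\textbf{Step 2 (smooth extension of the spanning sections).} Consider the family of sections $S_{(\alpha,x)}$ indexed by a smooth vertical $1$-form $\alpha$ and $x\in\g$. There are three ingredients to control.
\begin{itemize}
\item The vector part is $\tilde P^\sharp(\alpha)+x_{\QD}$. It is smooth on all of $\QD$ because $\tilde P$ and the $G$-action are smooth by Definition \ref{Poisson def}.
\item The term $\tfrac1t\tilde\mu_t^*\sigma(x)$ equals $\tilde\mu^*$ of the form $\alpha_x$ of Lemma \ref{lem:cov}, read in the coordinates $\psi$. It therefore extends smoothly, with value $\nu^*x^\vee$ at $t=0$.
\item For $C_t$, I would run the same computation as in Lemma \ref{lem:cov} and Lemma \ref{vect}. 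Pulling back $\theta^L-\theta^R$ by $\psi_t$ gives $t\big(\tfrac{1-e^{-tA}}{tA}-e^{-tA}\tfrac{1-e^{-tA}}{tA}\big)$-type expressions, which are $O(t^2)$, so division by $t$ leaves an $O(t)$ term. Hence $C_t$ extends smoothly with $C_0=\mathrm{Id}$.
\end{itemize}
Consequently $S_{(\alpha,x)}$ extends smoothly, and at $t=0$ it reads
\[
\big(P_0^\sharp(\alpha)+x_N,\ \alpha+\nu^*x^\vee\big).
\]
Since $\nu$ is a Poisson moment map, $x_N=P_0^\sharp(\nu^*x^\vee)$, so this equals $\big(P_0^\sharp\beta,\beta\big)$ with $\beta=\alpha+\nu^*x^\vee$. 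The span at $t=0$ is therefore exactly $L_{P_0}$.

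\textbf{Step 3 (constant rank and the Lagrangian property).} The span has rank $\dim\QD_t$ for $t\neq0$, because $\L_t$ is Lagrangian there. It also has rank $\dim\QD_t$ at $t=0$, because it equals $L_{P_0}$. To make the limit a smooth subbundle rather than a merely pointwise limit, I would use the following device. The sections $(\alpha,0)$ alone already span a rank-$\dim\QD_t$ family: $\alpha\mapsto(\tilde P^\sharp\alpha,\,C^*\alpha+\dots)$ is injective near $t=0$ because $C_0^*=\mathrm{Id}$. This gives a smooth frame near $\QD_0$. Away from $t=0$ the statement is fiberwise smoothness of the Bursztyn--Crainic structure, which is standard. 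Isotropy at $t\neq 0$ holds by construction and passes to $t=0$ by continuity, so the extended bundle is Lagrangian.

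\textbf{Expected main obstacle.} The hardest point is Step 1: pinning down precisely that the isomorphism to $(M,tP,t^2\phi_M,\mu)$ forces the $1/t$ normalization of $\sigma$ and of $C$. Related to this is the verification that $\tfrac1t\tilde\mu^*(\theta^L_a-\theta^R_a)$ really extends smoothly, and not merely boundedly, in the coordinates $\psi$. I would check this first, by expanding in the left trivialization exactly as in Lemma \ref{lem:cov}.
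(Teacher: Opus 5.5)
Your route is the same as the paper's: write the Bursztyn--Crainic structure of $(M,tP,t^2\phi_M,\mu)$ with $\sigma$ replaced by $\sigma/t$, use Lemma \ref{lem:cov} for the $\sigma$-term, show $C_t\to\mathrm{Id}$, and finish with the substitution $\beta=\alpha+\nu^*x^\vee$. Your Step 3 frame argument, using the $x=0$ sections near $t=0$, is more careful than the paper's bare rank count. However, Step 1 contains a concrete error, exactly at the point you flagged: $C_t$ must not carry the factor $1/t$.

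\textbf{Why $C$ is not rescaled.} The operator $C=\mathrm{Id}-\tfrac12\sum_a(e_a)_M\otimes\mu^*(\theta^L_a-\theta^R_a)$ does not depend on the scalar product. The tensor $\sum_a(e_a)_M\otimes\theta_a$ is just the action map applied to the $\g$-valued form $\theta^L-\theta^R$. Passing to the basis $\sqrt t\,e_a$, which is orthonormal for $\langle\cdot,\cdot\rangle_\g/t$, multiplies $(e_a)_M$ by $\sqrt t$ and the coordinates $\theta_a$ by $1/\sqrt t$, so the product is unchanged. Only $\sigma$ and $\eta_G$ acquire the factor $1/t$. Equivalently, after the substitution $\alpha\mapsto t\alpha$, the correct $\L_t$ is the image of $L_M$ under $(X,\beta)\mapsto(X,t^{-1}\beta)$. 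That image is manifestly Lagrangian, with $\tfrac1t$ times the twist of $L_M$.

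\textbf{Why your $\L_t$ fails.} The family you display is not isotropic for $t\in(0,1)$. Pair the element $(tP^\sharp\alpha,\,C_t^*\alpha)$ (the case $x=0$) with $(y_M,\,\tfrac1t\mu^*\sigma(y))$ (the case $\alpha=0$), and subtract the $t=1$ Bursztyn--Crainic identity. What remains is
\[
\Big(1-\frac1t\Big)\,\frac12\sum_a\alpha\big((e_a)_M\big)\,(\theta^L_a-\theta^R_a)\big(d\mu(y_M)\big).
\]
At $g=\mu(m)$ one has $(\theta^L-\theta^R)(d\mu(y_M))=\pm(2-\Ad_g-\Ad_{g^{-1}})y$. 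This is nonzero in general, already for $M=G$, where your span is strictly larger than the rescaled Cartan--Dirac structure of Proposition \ref{Dirac G}. So the appeal to \cite[Theorem 3.16]{bursztyn2007dirac}, and the rank and isotropy claims of your Step 3, do not apply to the family you wrote.

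\textbf{The repair.} It costs nothing. With the correct $C_t$, your own expansion gives $\psi_t^*(\theta^L-\theta^R)=O(t^2)$, so $C_t=\mathrm{Id}+O(t^2)$ with no division needed. Steps 2 and 3 then go through verbatim.

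The paper's proof writes the coefficient as $\tfrac t2$. By the same basis-independence, that is also not the Bursztyn--Crainic normalization. Both arguments only use $C_0=\mathrm{Id}$, which is why the slip is invisible at $t=0$. It does matter for the claim that $\L_t$ is a Dirac structure for $t\neq0$.
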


\begin{proof}

The proof is organized through the following steps: \\

\noindent\textbf{Step 1: Source Dirac structure.} For $t\neq 0$, the quasi Poisson structure $(M,tP,\mu)$ yields by the construction \ref{Dirac qp} the Dirac structure \begin{align*}
    \L_t = \{((tP)^{\sharp} + x_{\QD_t}, C_{t}^{*}(\alpha_t) + \tilde{\mu}_t^{*}\sigma_t(x)), \quad \alpha_t \in T^{*}\QD_t, x\in \g\},
\end{align*}
where \begin{align*}
    C^{*}_{t} &= \mathrm{Id} - \dfrac{t}{2} \sum_{i} (e_i)_{M}\otimes \tilde{\mu}_{t}^{*}(\theta^{L}_i - \theta^{R}_i), \quad \sigma_{t}(x) = \dfrac{1}{t} \sigma(x).
\end{align*}
As $\tilde{P}$ is a smooth section of $\bigwedge^{2} \Ker(d\pi),$ the map $\alpha \mtoo (\tilde{P}_{t})^{\sharp}(\alpha)$ varies smoothly on $t$, with $\tilde{P}_{0}^{\sharp} = P_0^{\sharp}.$ Moreover, the vector field $x_{\QD_t}$ is smooth for every $t$ since the action of $G$ on $\QD$ is smooth. Thus, the first component is smooth for any $t.$ On the other side, using the $\psi$-coordinates of the central fiber in $\G,$ the Maurer-Cartan forms satisfy $\tilde{\mu}_{t}^{*}(\theta^{L}_i - \theta^{R}_i) = O(t)$ since $\theta^{L}_i(e) = \theta^{R}_i(e).$ Then, $C^{*}_{t} = \mathrm{Id} + O(t)$ near the central fiber, and $C^{*}_0 = \mathrm{Id}$.  As the map $\tilde{\mu}: \QD \too \G$ is smooth, $C^{*}_{t}(\alpha_t)$ varies smoothly in $t$, with $C^{*}_{0}(\alpha_0) = \alpha_{0}.$ Similarly, $\tilde{\mu}^{*}_{t}\sigma_{t}(x)$ extends smoothly to $\nu^{*}x$ (Lemma \ref{lem:cov}). Finally, as $t$ goes to 0, we get \[
\L_{0} = \{(P_{0}^{\sharp}(\alpha_0) + x_N, \alpha_0 + \nu^{*}x), \quad \alpha_0 \in T^{*}N, x\in \g\}.
\]
Consider the variable change $\beta = \alpha_0 + \nu^{*}x.$ Then \[  
P_{0}^{\sharp}(\alpha_0) + x_N = P_{0}^{\sharp}(\alpha_0) + P_{0}^{\sharp}(\nu^{*}x) = P_{0}^{\sharp}(\alpha_0 + \nu^{*}x) = P^{\sharp}_{0}(\beta). 
\]
Thus, $\L_{0} = \{(P^{\sharp}_{0}(\beta),\beta), \quad \beta \in T^{*}N\} = \L_{P_0}.$\\

\paragraph{\textbf{The Lagrangian condition.}} For $t\neq 0,$ $\L_t$ is Lagrangian and $\text{rank}(\L_t) = \text{dim}(\QD_t)$ by \cite[\text{Theorem 3.16}]{bursztyn2005dirac}, and as the graph of Poisson bi-vector for $t = 0.$ Since $\pi: \QD \too \R$ is a smooth submersion, the fiber dimension is constant, hence so is the rank of $\L_t.$\\

\paragraph{\textbf{The twisting 3-form on the source.}} We define the 3-form $\lambda_{\QD}$ on $\QD$ to be the pull-back of $\lambda_{\G}$ by the map $\tilde{\mu}.$ By Lemma \ref{smooth eta}, $\lambda_{\G}$ extends smoothly across $t = 0$ with $\lambda_{\G,0} = 0.$ Combined with the smoothness of $\tilde{\mu},$ this gives a smooth 3-form on $\QD.$ Fiber-wise, for $t\neq 0,$ $\lambda_{\QD,t} = \tilde{\mu}_{t}^{*}\lambda_{\G,t} = \dfrac{1}{t}\mu^{*}\eta_{G},$ and $\lambda_{\QD,0} = \tilde{\mu}_{0}^{*}\lambda_{\G,0} = \nu^{*}(0) = 0.$\\

\noindent\textbf{Strong Dirac condition.} For any $t\neq 0,$ the map $\tilde{\mu}_t: (\QD_t,\L_t,\lambda_{\QD,t}) \too (\G_t,\L_{\G,t},\lambda_{\G,t})$ is a strong Dirac map. Indeed, $\tilde{\mu}_t$ is $f$-Dirac, and $\mu^{*}\lambda_{\G,t} = \lambda_{\M,t}.$ The kernel transversality,i.e., $\Ker(d\tilde{\mu}_t)\cap \Ker(\L_t) = 0, \text{ for all }  t$ follows immediately from \cite[\text{Theorem 3.16}]{bursztyn2005dirac}.
\end{proof}

\begin{proof}(proof of Theorem \ref{embed thm})
We assemble the Dirac deformation data:
\begin{itemize}
    \item \textbf{Source:} $\M \coloneqq \QD,$ with projection $\pi_{\M} = \pi.$
    \item \textbf{Target:} $ \X \coloneqq \G = \D(G,\{1\})$ with projection $\pi_\X = \pi_{\G}.$
    \item \textbf{Fiber-wise Dirac structures} $\L$ and $\K$ from Lemma \ref{embd lem} and Proposition \ref{Dirac G}.
    \item \textbf{Smooth 3-forms.} $\lambda_{\M}$ from Lemma \ref{embd lem} and $\lambda_\K$ from Proposition \ref{Dirac G}.
    \item \textbf{The moment map.} $\widehat{\phi} \coloneqq \tilde{\mu}: \QD \too \G.$
\end{itemize}
The proof follows immediately from Lemma \ref{embd lem} and Proposition \ref{Dirac G}.
\end{proof}

The following examples show that the Dirac deformation theory \ref{Dirac def} strictly generalizes the Poisson deformation theory. More specifically, there exist Dirac deformations that do not arise from any Hamiltonian quasi-Poisson manifold in the sense of Theorem \ref{embed thm}.

\begin{Exp}(Uncontrolled Kernel) \label{ex:UncontrolledKernel} 
Let $M = \R^{4}$ with coordinates $(x_1,x_2,x_3,x_4).$ Consider the family of two-forms $\omega_t = dx_1\wedge dx_2 + (1-t)dx_3 \wedge dx_4, \quad t\in [0,1].$ $\omega_t$ is closed and the associated Dirac structure $\L_t \coloneqq L_{\omega_t} = \{(v,\omega^{\flat}(v))\}$ which is untwisted ($\lambda_{\M,t} = 0$ for each $t$). At $t = 0,$ $\omega_0 = dx_1\wedge dx_2 + dx_3 \wedge dx_4$ is symplectic (rank $4$) and $\L_{0} = \L_{\pi_0}$ where $\pi_0 = (\omega_{0}^{\flat})^{-1},$ is the Poisson structure. For $t\in [0,1),$ $\omega_t$ is symplectic and so $\L_t$ is again Poisson-Dirac. For $t = 1,$ $\omega_1 = dx_1 \wedge dx_2$ so that $\ker(\omega) = \textbf{span}(\partial x_3, \partial x_4),$ which gives $\Ker(L_{\omega_1}) = L_{\omega_1}\cap T\R^{4} = \ker(\omega_1).$ Taking any Poisson manifold $(Y,\pi_Y)$ (eventually $Y = {\text{pt}}$) and a Poisson map $\widehat{\phi}_0: (\R^{4},\pi_0) \too (Y,\pi_Y)$ assembles into a Dirac deformation theory from $(\R^{4},L_{\omega_1})$ to $(\R^4, L_{{\pi}_0}).$ Now, suppose that $L_{\omega_1}$ arises from a quasi-Hamiltonian manifold $(M,G,\mu)$ within the Dirac deformation theory. Then $\Ker(\omega_1)_{m} = \{x_{M}(m), \Ad_{\mu(m)}x + x = 0\}.$ This requires a compact Lie group $G$ acting on $\R^4$ such that the fundamental vector fields generate $\text{span}(\partial x_3, \partial x_4)$ at each point. Then, $G$ acts freely on the $(x_3,x_4)$ direction and trivially on $(x_1,x_2).$ In other words, $\partial x_3$ and $\partial x_4$ act as translations which generate $\R^{2}$ as a subgroup of $G.$ But, as $R^{2}$ is not compact, the integral curves of $\partial x_3$ and $\partial x_4$ are non-compact lines which contradicts the fact that the $G$-orbits of a compact Lie group are compact.       
\end{Exp}

\begin{Exp} \label{ex:CP2}  
Let $M = \mathbb{CP}^{2}$ be the complex projective space of dimension $2$. $M$ is a compact complex symplectic manifold with respect to the Fubini-Study two-form $\omega_{\text{FS}}$ that induces a Poisson-Dirac structure $L_{\pi_0},$ with $\pi_0 = (\omega_{\text{FS}}^{\flat})^{-1}.$ Consider $ X = M = \mathbb{CP}^{2}, \M = \X = \mathbb{CP}^{2} \times \R, \widehat{\phi} = \mathrm{Id}, \L_t = \K_t = \L_{\pi_{0}}$ and $\widehat{\phi} = \mathrm{Id}.$ This assembles into a Dirac deformation theory from $M$ to itself. Suppose that this deformation arises from a Hamiltonian $G$-quasi-Poisson manifold. For each $t$, $\mathbb{CP}^{2}$ is isomorphic to $G$. Since $\mathbb{CP}^{2}$ is simply connected, $\pi_{1}(CP^{2}) = 0$ and $H^{2}(\mathbb{CP}^{2},\mathbb{Z}) \cong \mathbb{Z}.$ Now, by Bott periodicity and Hurewicz, for any compact, connected, simply connected Lie group $G$, $H^{2}(G,\mathbb{Z}) = 0$ Since $H^{2}(\mathbb{CP}^{2},\mathbb{Z}) \neq H^{2}(G,\mathbb{Z})$ for any simply-connected compact Lie group, $\mathbb{CP}^2$ is not even homotopy equivalent to $G$. For non-simply connected Lie group $G$, the universal cover $\tilde{G}$ is a compact Lie group with $H^{2}(\tilde{G}) = 0$ and $\pi_1(G) \neq 0,$ which contradicts $\pi_{1}(\mathbb{CP}^2) = 0.$ 
\end{Exp}

\begin{Rem}
 The Dirac deformation theory developed here can be interpreted in terms of $L_{\infty}$ algebras. The twisted Dirac structure $(M,L_M,\lambda_M)$ determines an $L_{\infty}$-algebra $\g^{\bullet}_{L_M} = \Gamma(\bigwedge^{\bullet + 1}L_M)$ with differential $d_{L_M},$ Schouten binary bracket $l_2$ and a cubic bracket $l_3$ controlled by $\lambda_M.$ A Dirac deformation in the sense of Definition \ref{Dirac def} corresponds to a smooth family of Maurer-Cartan elements in a family of such $L_{\infty}$-algebras, degenerating at $t = 0$ to a DGLA Maurer-Cartan element in the Poisson deformation complex. The existence of a Dirac deformation is then controlled by two-obstruction theory: pointwise $L_{\infty}$-Maurer-Cartan obstructions at each $t,$ together with a degeneration obstruction controlling the contraction $l^{t}_3 \too 0$ as $t \too 0.$ A systematic obstruction theory is left to future work. 
\end{Rem}

\section{Dirac deformation and reduction along strong Dirac maps.} \label{section 4}
The purpose of this section is to prove that the Dirac deformation theory is compatible with the reduction along strong Dirac maps (see for instance \cite{bualibanu2026reduction}). We then apply this to all fundamental examples of \cite{bualibanu2026reduction}, recovering deformations from quasi-Poisson reduction to Hamiltonian reduction.

\subsection{Reduction along strong Dirac maps} We recall the definitions from \cite{bualibanu2026reduction}. Let $(X,L_X)$ be a $\lambda_X$-twisted Dirac manifold. A strong Dirac map \cite[\text{equation 1.7}]{bualibanu2026reduction} is an $f$-Dirac map $\phi: (M,L_M) \too (X,L_X)$ such that \begin{equation} \label{eq s Dirac}
    \phi^{*}\lambda_X = \lambda_M \text{ and } \Ker(d\phi) \cap \Ker(L_M) = 0.
\end{equation}

\begin{Def} \cite[Definition 2.2]{bualibanu2026reduction} \label{gen red level}
\textit{A generalized reduction level} is a triple $(S,\gamma,\phi)$ consisting of a submanifold $i: S\too X,$ a two-form $\gamma \in \Omega^{2}(S),$ and a smooth map $\varphi: S \too Z$ of constant rank to a Dirac manifold $(Z,L_Z)$ such that $d\phi(\tau_{-\gamma}i^{*}L_X) = L_Z, i^{*}\lambda_X + d\gamma = \phi^{*}\lambda_Z,$ and the distribution \begin{equation}
    A_{S,\gamma,\phi} \coloneqq L_X \cap i^{*}L_X \cap (\ker(d\varphi) \oplus T^{*}X) 
\end{equation}
has constant rank.
\end{Def}

\begin{Thm} \cite[Theorem 2.21]{bualibanu2026reduction} \label{reduction th}
Let $\Phi: (M,L_M) \too (X,L_X)$ be a strong Dirac map and $(S,\gamma,\phi)$ a generalized level of $X$ such that $S$ intersects cleanly $\Phi.$ If the quotient $Q = \Phi^{-1}(S)/A_{S,\gamma,\phi}$ is a manifold then \begin{enumerate}
    \item $Q$ inherits a Dirac structure $L_Q = \pi_{*}L_C,$ where $L_C = \tau_{-\Phi_{C}^{*}\gamma}j^{*}L_M.$
    \item The map $\phi \circ \Phi_C$ descends to a strong Dirac map $\bar{\phi}: Q \too Z.$
    \item If $M$ is non-degenerate with $L_M = L_\omega,$ then $L_Q$ is also non-degenerate with $\pi^{*}\bar{\omega} = j^{*}\omega - \Phi_{C}^{*}\gamma.$
\end{enumerate}
\end{Thm}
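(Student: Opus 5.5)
The plan is to obtain $L_Q$ in two stages, first on the preimage $C \coloneqq \Phi^{-1}(S)$ with inclusion $j\colon C\too M$ and $\Phi_C \coloneqq \Phi\circ j$, and then on the quotient $Q=C/A_{S,\gamma,\phi}$. On $C$ we set $L_C = \tau_{-\Phi_C^*\gamma}j^*L_M$. Clean intersection gives $T_pC = (d\Phi)^{-1}(T_{\Phi(p)}S)$, and together with the strong condition $\Ker(d\Phi)\cap\Ker(L_M)=0$ this should make $j^*L_M$ of constant rank. Granting that, $j^*L_M$ is a $j^*\lambda_M$-twisted Dirac structure, and the gauge formula $f^*\tau_\gamma = \tau_{f^*\gamma}f^*$ from the preliminaries makes $L_C$ a $(j^*\lambda_M + \Phi_C^*d\gamma)$-twisted Dirac structure. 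Since $\Phi^*\lambda_X=\lambda_M$ and $i^*\lambda_X + d\gamma=\phi^*\lambda_Z$, this twist equals $(\phi\circ\Phi_C)^*\lambda_Z$. The same gauge identities, combined with $f$-Diracness of $\Phi$ and the level condition $d\phi(\tau_{-\gamma}i^*L_X)=L_Z$, should show that $\phi\circ\Phi_C\colon C\too Z$ is $f$-Dirac.

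Next I identify the distribution by which we quotient. Via the Lie algebroid action of $L_X$ on $M$ attached to the strong Dirac map $\Phi$ (the map $\rho_M$ of the preliminaries), the constant-rank subbundle $A_{S,\gamma,\phi}$ acts on $C$, and its image is a distribution $D\subset TC$. The key claim is that $D = \Ker(L_C)\cap\Ker(d(\phi\circ\Phi_C))$. The inclusion ``$\supseteq$'' comes from the unique-lift property of strong Dirac maps: an element $(v,\beta)$ of the kernel pushes forward to an element of $L_X$ lying in $i^*L_X$ and in $\ker d\phi\oplus T^*X$. The reverse inclusion is a direct computation using $\Phi^*\alpha$ and the gauge term. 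Constant rank of $A$ and injectivity of the action then give that $D$ is regular, and it is involutive because it is the image of a Lie subalgebroid (closed under bracket, since $L_X$, $i^*L_X$ and the annihilator of $\ker d\phi$ are each bracket-closed). Because $D\subset \Ker(L_C)$, the generating sections $(v,0)$ lie in $L_C$; their flows preserve $L_C$ by the Courant bracket, and $i_v$ of the twist vanishes since the twist is pulled back from $Z$ and $v\in\ker d(\phi\circ\Phi_C)$. So $D$ is a Dirac distribution contained in the kernel of the twisting form.

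With this in hand, \cite[Proposition 1.12]{bualibanu2026reduction} applies, since $Q$ is assumed smooth. It gives a Dirac structure $L_Q=\pi_*L_C$ twisted by $\bar\phi^*\lambda_Z$, which proves (1). The map $\phi\circ\Phi_C$ is constant along the leaves of $D$, so it descends to $\bar\phi$. Its $f$-Diracness passes to the quotient because $\pi_*$ composes functorially. The strong condition holds because we quotiented exactly by $\Ker(L_C)\cap\Ker d(\phi\circ\Phi_C)$, so $\Ker(L_Q)\cap\Ker d\bar\phi=0$, which proves (2). For (3), take $L_M=L_\omega$. Then $L_C$ is the graph of $j^*\omega-\Phi_C^*\gamma$, and $L_Q$ is the graph of the descended form $\bar\omega$. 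Non-degeneracy follows once $\Ker(L_C)=D$, that is, once the whole kernel of the presymplectic form is already vertical for $\phi\circ\Phi_C$. This is where the generalized-level condition (the pushforward equals $L_Z$) enters.

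The main obstacle is the identification $D=\Ker(L_C)\cap\Ker d(\phi\circ\Phi_C)$ together with its constant rank. This is a pointwise linear-algebra argument that mixes the clean-intersection hypothesis, the gauge twist by $\gamma$, and the strong Dirac lift. I would work at a single point with the explicit descriptions of $j^*L_M$ and of $A$, and carry out a dimension count.
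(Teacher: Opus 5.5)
The paper does not prove this statement; it is quoted from B\u{a}libanu--Mayrand. So your argument has to be judged on its own terms. Its architecture is sound, and several pieces go through: the twist bookkeeping, the $f$-Diracness of $\phi\circ\Phi_C$ via $(\Phi_C)_*j^*L_M=i^*\Phi_*L_M$, the pointwise identification $D=\Ker(L_C)\cap\ker d(\phi\circ\Phi_C)$, and the strong condition for $\bar\phi$. The $f$-Diracness and the inclusion $\Ker(L_C)\cap\ker d(\phi\circ\Phi_C)\subseteq D$ both rest on the identity $(T_pC)^\circ=(d\Phi_p)^*(T_sS)^\circ$ supplied by cleanness. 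That identity is what lets you rewrite $\beta$, with $(v,\beta)\in L_M$ and $j^*\beta=\Phi_C^*\gamma^\flat(d\Phi v)$, as $\Phi^*\alpha$ with $i^*\alpha=\gamma^\flat(d\Phi v)$. Your phrase ``pushes forward to $L_X$'' silently needs this step.

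The genuine gap is the smoothness of $L_C$ and the regularity of $D$. Cleanness plus $\Ker(d\Phi)\cap\Ker(L_M)=0$ do not make $j^*L_M$ smooth. Take $M=\mathbb{R}^3$ with $\{x,y\}=z$, $\Phi=x\colon M\to X=\mathbb{R}$ (zero Poisson structure), $S=\{0\}$, $\gamma=0$, $Z=\mathrm{pt}$. Every hypothesis except smoothness of $Q$ holds, and $A\cong\mathbb{R}$ has constant rank. Yet $j^*L_M=0\oplus T^*C$ along $\{z=0\}$ and $\operatorname{span}\{(\partial_y,0),(0,dz)\}$ elsewhere. Likewise ``injectivity of the action'' is not available: the isotropy $\ker(\rho_M|_{A_s})$ can be nonzero, as here at $z=0$, or for any non-effective action.

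The missing input is the hypothesis that $Q$ is a manifold. Since $\pi\colon C\to Q$ is a submersion whose fibres are the $A$-orbits, $D=\ker d\pi$ is regular. By the same annihilator identity, $L_M\cap(T_pC)^\circ$ is the image under $(d\Phi_p)^*$ of that isotropy, with kernel $(T_sS+\operatorname{im}d\Phi_p)^\circ$. Hence
\[
\rk\bigl(L_M\cap(T_pC)^\circ\bigr)=\rk A-\dim D_p-\bigl(\dim X-\dim S-\dim M+\dim C\bigr)
\]
is locally constant, and only at this point is $L_C$ a smooth twisted Dirac structure. So the order must be: $Q$ smooth $\Rightarrow$ $D$ regular $\Rightarrow$ $L_C$ smooth $\Rightarrow$ your Dirac-vector-field argument and \cite[Proposition 1.12]{bualibanu2026reduction}. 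Involutivity of $D$ is then immediate from $D=\Ker(L_C)\cap\ker d(\phi\circ\Phi_C)$, since $[(v,0),(w,0)]_{\lambda_C}=([v,w],\iota_{v\wedge w}\lambda_C)=([v,w],0)$. You do not need to argue that $A$ is a subalgebroid.

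In (3) you misread ``non-degenerate''. Here, as in Definition~\ref{def:ham}, it means $L\cap T^*M=0$, i.e.\ $L$ is the graph of a $2$-form, not that the $2$-form is non-degenerate. Your sentence ``$L_Q$ is the graph of the descended form'' already gives this once you check that $\omega_C=j^*\omega-\Phi_C^*\gamma$ is $D$-basic: $D\subset\ker\omega_C$ and $\mathcal{L}_v\omega_C=-\iota_v\lambda_C=0$. Then $\pi_*\mathrm{Gr}(\omega_C)=\mathrm{Gr}(\bar\omega)$, because $\pi^*$ is injective on covectors.

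Your further claim, that $\Ker(L_C)=D$ follows from the level condition, is false. For the level $(X,0,\mathrm{id}_X)$ with $Z=X$ one has $A=0$, $Q=M$ and $L_Q=L_\omega$. For a quasi-Hamiltonian space, $\ker\omega_p=\{\xi_M(p):\Ad_{\Phi(p)}\xi=-\xi\}$ is generally nonzero; for example $\omega=0$ on the conjugacy class of $\mathrm{diag}(i,-i)$ in $SU(2)$. $\Ker(L_Q)=0$ is guaranteed only when $Z$ is a point, and then it is the strong condition from (2), not the level condition.
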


\subsection{Deformation of reduction levels}
We first introduce a deformation of reduction levels compatible with the Dirac deformation theory. 

\begin{Def} \label{defo level}
Let $(\X,\K,\lambda_\X, \pi_{\X})$ be a smooth family of twisted Dirac manifolds over $\R$ (as in Definition \ref{Dirac def}). Suppose that $S \subset X$ (respectively $\tilde{S} \subset Y$) is a generalized reduction level of $X$(resp. $\tilde{S}$ is a reduction level of $Y$). A deformation of reduction level on $\X$ consists of \begin{enumerate}
    \item A smooth submanifold $\S$ of $\X$ such that $\pi_{X}|_\S$ is a submersion, so that $\S_t \coloneqq \X_t \cap \S$ is a submanifold of $\X_t$ for each $t.$
    \item A smooth section $\widehat{\gamma}$ of $\bigwedge^{2}(\Ker(d\pi_{X}|_\S))^{*}.$
    \item A smooth family of Dirac manifolds $(\Z,\lambda_\Z,\pi_{\Z})$ and a smooth fiber-wise map $\varphi: \S \too \Z$ such that \begin{itemize}
        \item For each $t,$ $(\S_t,\widehat{\gamma}_t,\varphi_t)$ is a generalized reduction level of $(\X_t,\K_t,\lambda_{\X,t},)$ in the sense of Definition \ref{gen red level}.
        \item $(\S_1,\widehat{\gamma}_1,\varphi_1 \cong (S,\gamma,\varphi), (\S_0,\widehat{\gamma}_0,\varphi_0) \cong (\tilde{S},\tilde{\gamma},\tilde{\varphi})$ and $\widehat{\gamma}_0$ is closed.
        \item The fiberwise stabilizers sub-algebroids $A_{\S_t,\widehat{\gamma}_t,\varphi_t}$ assemble into a smooth family over $\R.$
    \end{itemize}
\end{enumerate}
\end{Def}

Now, we are ready to state the principal theorem of this section.
\begin{Thm}[Dirac deformation of reduced spaces]\label{thm:main}
Let $(\M, \X, \L, \K, \lambda_\M, 
\lambda_X, \widehat\Phi)$ be a Dirac deformation from $(M, L_M)$ to 
$(N, L_N)$, and let $(\S, \widehat{\gamma}, \varphi, \Z)$ be a deformation of reduction levels on $X$. Suppose that:
\begin{enumerate}
    \item[(H1)] $\S$ has a clean intersection with $\Phi$ and the restriction $\pi_{\M}|_{\C}$ is a surjective submersion, where $\C = \widehat{\Phi}^{-1}(\S).$
    \item[(H2)] The fiber-wise quotient 
    $\mathcal{Q}_t := \hat\Phi_t^{-1}(\S_t)/A_{\S_t,\widehat{\gamma}_t,\varphi_t}$ 
    is a smooth manifold for each $t$, and the family 
    $\Q := \bigsqcup_{t} \mathcal{Q}_t$ is a smooth manifold 
    over $\R$.
    \item[(H3)] The isotropy algebra of the stabilizer action has locally constant dimension on $\mathcal{C}$. Equivalently,
    $\operatorname{rk}(\mathcal{L} \cap T^{\circ}\mathcal{C})$ is locally constant on $\mathcal{C}$, where $T^{\circ}\mathcal{C} \subset T^{*}\mathcal{M}$ denotes the fiberwise conormal of $\mathcal{C}$ inside $\mathcal{M}$.
\end{enumerate}
Then:
\begin{enumerate}
    \item The family $Q$ carries a smooth fiber-wise Lagrangian bundle
    $\mathcal{L}^{\mathrm{red}}$ such that $\mathcal{L}^{\mathrm{red}}_t$ is the 
    reduced Dirac structure from Theorem \ref{reduction th} applied to 
    $\hat\Phi_t : (\M_t, \L_t) \to (\X_t, \K_t)$ 
    at the reduction level $(\S_t, \widehat{\gamma}_t, \varphi_t)$.
    \item The reduced strong Dirac maps 
    $\bar\Phi_t : (\Q_t, \L^{\mathrm{red}}_t) \to 
    (\Z_t, \K_{Z,t})$ assemble into a smooth fiberwise map 
    $\bar\Phi : \Q \to \Z$.
    \item The data $(\Q, \Z, \L^{\mathrm{red}}, 
    \K_Z, \lambda_\Q, \lambda_Z, \bar\Phi)$ is a Dirac deformation 
    from the reduced twisted Dirac manifold 
    $(\Q_1, \L^{\mathrm{red}}_1)$ to the reduced Poisson manifold 
    $(\Q_0, \L^{\mathrm{red}}_0)$.
    \item If each $\M_t$ is non-degenerate with $\L_t = L_{\omega_t}$, 
    then each $\Q_t$ is non-degenerate, and the reduced 2-forms 
    $\bar\omega_t$ on $\mathcal{Q}_t$ vary smoothly in $t$.
\end{enumerate}
\end{Thm}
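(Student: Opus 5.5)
The plan is to run the construction of Theorem \ref{reduction th} in families, one fiber at a time, and then check that every object it produces depends smoothly on $t$ across $t=0$.

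\textbf{Step 1: pull back and gauge transform on $\C$.} By (H1), $\C=\widehat{\Phi}^{-1}(\S)$ is a smooth submanifold of $\M$ and $\pi_\M|_\C$ is a submersion. Hence each fiber $\C_t=\widehat{\Phi}_t^{-1}(\S_t)$ is the clean preimage appearing in Theorem \ref{reduction th}. Let $j:\C\to\M$ be the inclusion. I would form the fiberwise pullback $j^*\L$ and then the gauge transform
\[
\L_\C:=\tau_{-\widehat{\Phi}_\C^*\widehat{\gamma}}\,j^*\L .
\]
A pullback of a smooth Lagrangian family is smooth exactly when its rank is locally constant. Pointwise we have
\[
j^*\L=\bigl(\L\cap(T\C\oplus T^*\M)\bigr)\big/\bigl(\L\cap T^\circ\C\bigr),
\]
so (H3) is precisely the hypothesis that makes $j^*\L$ a smooth vector bundle over $\C$, including along $\C_0$. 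The gauge transformation by the smooth form $\widehat{\Phi}_\C^*\widehat{\gamma}$ preserves smoothness and the Lagrangian property. The twisting becomes $j^*\lambda_\M-d_{\mathrm{vert}}\widehat{\Phi}_\C^*\widehat{\gamma}$, which is smooth. At $t=0$ it reduces to $\widehat{\Phi}_0^*(i^*\lambda_{\X,0}+d\widehat{\gamma}_0)$, and this vanishes because $\lambda_{\X,0}=0$ and $\widehat{\gamma}_0$ is closed.

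\textbf{Step 2: descend to $\Q$.} The last condition in Definition \ref{defo level} says the stabilizers $A_{\S_t,\widehat{\gamma}_t,\varphi_t}$ form a smooth family. Pulling it back along $\widehat{\Phi}_\C$ through the strong Dirac action $\rho_\M$ gives a smooth vertical involutive distribution $\mathcal{D}\subset\Ker(d\pi_\M|_\C)$. The strong Dirac condition makes $\rho_\M$ injective on kernels, and that injectivity is what gives $\mathcal{D}$ constant rank. By (H2), $p:\C\to\Q$ is a smooth submersion over $\R$ whose fibers are the leaves of $\mathcal{D}$. I then set
\[
\Lred:=p_*\L_\C .
\]
For each $t\ne0$, \cite[Proposition 1.12]{bualibanu2026reduction} applies to the fiber, so $\Lred_t$ is the reduced structure of Theorem \ref{reduction th}(1). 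Smoothness of $\Lred$ in $t$ follows from a local computation. Choose a local smooth section $s$ of $p$. Then
\[
\Lred=\bigl\{(dp(v),\alpha):(v,p^*\alpha)\in\L_\C\bigr\},
\]
which is the image of the smooth bundle $\L_\C\cap(T\C\oplus p^*T^*\Q)$ restricted along $s$. This bundle has constant rank because $\mathcal{D}\subset\Ker(\L_\C)$ fiberwise. The twisting $\lambda_\Q$ is defined by $p^*\lambda_\Q=\lambda_{\L_\C}$, which determines it uniquely since $p$ is a submersion, and it is smooth.

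\textbf{Step 3: the reduced maps and the central fiber.} The composite $\varphi\circ\widehat{\Phi}_\C$ is smooth and constant along $\mathcal{D}$, so it descends to a smooth map $\bar\Phi:\Q\to\Z$. Fiberwise, $\bar\Phi_t$ is the strong Dirac map of Theorem \ref{reduction th}(2), which proves part (2). For part (3), Steps 1 and 2 already show that $\lambda_{\Q,0}=0$. To conclude that $\Lred_0$ is Poisson, I would show that its kernel vanishes. If $(v,0)\in\Lred_0$, then because $\bar\Phi_0$ is strong Dirac, $v$ is determined by $\Ker(\K_{Z,0})$; this is zero since $\Z_0$ is Poisson, so $v=0$. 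Alternatively one can invoke directly that Poisson reduction at a Poisson level is Poisson.

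For part (4), Theorem \ref{reduction th}(3) gives $p^*\bar\omega_t=j^*\omega_t-\widehat{\Phi}_\C^*\widehat{\gamma}_t$. The right-hand side is a smooth basic vertical form, and $p$ is a submersion, so $\bar\omega_t$ varies smoothly in $t$.

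\textbf{Main obstacle.} The hardest step is smoothness at $t=0$ in Steps 1 and 2. Theorem \ref{reduction th} is purely fiberwise, and ranks of intersections of smooth bundles can jump in the limit. I would first try to show that (H3), together with the constant rank of the stabilizer family, forces $\Lred$ to be the image of a constant-rank smooth bundle map. That conclusion is closed in the Grassmannian, so the limit of $\Lred_t$ as $t\to0$ would coincide with the fiberwise reduction of $\L_0$. The key sub-claim to verify is that $\mathcal{D}_0$ equals the kernel distribution computed from $\L_0$ itself, rather than just the limit of the distributions $\mathcal{D}_t$ for $t\ne0$.
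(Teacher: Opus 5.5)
Your argument is correct in substance. It follows the paper's architecture: fiberwise pullback and gauge transform on $\C$, push-forward along the stabilizer orbits, descent of $\varphi\circ\widehat{\Phi}_\C$, and vanishing of the twist at $t=0$. The key smoothness step is handled differently, however.

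The paper does not simply invoke (H3) to make $j^*\L$ smooth. It runs a sandwich argument to \emph{derive} the cross-fibre constancy that (H3) records:
\begin{itemize}
\item $r=\rk(\L\cap T^\circ\C)$ is shown to be upper semicontinuous;
\item $r$ is compared with $\iota=\dim A\circ\widehat{\Phi}$, which is locally constant because the stabilizers form a smooth family;
\item $r$ is identified with $\iota$ for $t\neq0$ via \cite[Proposition 2.17]{bualibanu2026reduction}, and bounded by $\iota$ at $t=0$ via Lemma~\ref{lem:central}.
\end{itemize}
You take (H3) at face value, which is all the theorem requires. This also frees you from the identification $r=\iota$, which fails in general. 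For graph-type $\L_t=L_{\omega_t}$, or a nondegenerate Poisson $\L_0$, one has $\L\cap(0\oplus T^\circ\C)=0$, so $r\equiv0$. But $\iota$ is the rank of the stabilizer, typically positive; for instance it is $\dim\g$ for Marsden--Weinstein reduction at $0$.

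Your descent step is also more explicit than the paper's. With a local section of $p$ and $\mathcal{D}\subset\Ker(\L_\C)$, the Lagrangian property gives $\L_\C\subset T\C\oplus\mathcal{D}^\circ$. So $\Lred$ is the image of a smooth bundle map with kernel $\mathcal{D}\oplus0$, hence smooth. Your kernel argument that $\Lred_0$ is Poisson (f-Dirac plus strong Dirac for $\bar\Phi_0$, using $\Ker(\K_{Z,0})=0$) handles a general Poisson $\Z_0$, where the paper only cites the case $\Z_0=\mathrm{pt}$. You also actually treat part (4), which the paper leaves implicit.

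Three corrections are needed.

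\textbf{First:} the strong Dirac condition does not make the action map $\widehat{\Phi}_\C^*A\to T\C$ injective. Its kernel at $p$ is the isotropy of the action, e.g.\ the stabilizer $\g_p$ in Hamiltonian reduction. What the strong Dirac condition gives you (valid at every $t$, automatic at $t=0$) is smoothness of the action map across $t=0$. Indeed, the relation bundle $\{((v,\widehat{\Phi}^*\alpha),(w,\alpha))\}\subset\L\oplus\widehat{\Phi}^*\K$ then projects isomorphically onto $\widehat{\Phi}^*\K$. The constant rank of $\mathcal{D}$ must come instead from the isotropy formulation of (H3) together with the constant rank of $A$. Alternatively it follows from (H2), if $p$ is a submersion whose fibres are the orbits.

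\textbf{Second:} $\tau_{-B}$ shifts the twist by $+dB$. So the twist on $\C$ is $j^*\lambda_\M+d\,\widehat{\Phi}_\C^*\widehat{\gamma}=\widehat{\Phi}_\C^*(i^*\lambda_\X+d\widehat{\gamma})$, which is what your $t=0$ formula actually uses.

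\textbf{Third:} the obstacle in your last paragraph is not there, because every object is defined pointwise.
\begin{itemize}
\item For $p\in\C_0$, one has $(j^*\L)_p=(j_0^*\L_0)_p$ and $\mathcal{D}_p=\rho_{\M_0}(A_{\widehat{\Phi}_0(p)})$ by definition.
\item Theorem~\ref{reduction th} applies verbatim at $t=0$, since a Poisson map is strong Dirac.
\item The constant-rank hypotheses only make these pointwise families into smooth bundles; no limiting argument enters.
\item You never need $\mathcal{D}_0=\Ker(\L_\C)_0$, only the inclusion $\mathcal{D}_0\subset\Ker(\L_\C)_0$, which the fibrewise theorem supplies.
\end{itemize}
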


Before giving the proof, we prove the following useful lemma 

\begin{Lem} \label{lem:central}
    Let $(\M_0,\L_O)$ be the central Poisson fiber, with $\L_{0} = L_{\pi_0}$ the graph of a Poisson bivector $\pi_0$ and $\widehat{\Phi}_0: \M_0 \too \X_0$ be the corresponding Poisson map. Let $(\S_0,\hat{\gamma}_0,\varphi_0)$ be the central reduction level, with $\lambda_{\X,0} = 0$ and $\hat{\gamma}_0 = 0$ being closed, and set $\C_0 = \widehat{\Phi}^{-1}_{0}(\S_0).$ Then, for each $p\in \C_0$ the anchor-plus-conormal map restricts to a linear isomorphism \[
    \A_{\S_0,\hat{\gamma}_{0},\varphi_0}|\widehat{\Phi}_{0}(p) \too \L_0 \cap T_{p}^{\circ }\C_0.
    \]
    In particular, $\rk(\L_0 \cap T_{p}\C_0) = \dim \A_{\S_0,\hat{\gamma}_0,\varphi_0}|\widetilde{\Phi}_{0}(p),$ and hence $r(p) \leq i(p)$ for every $p \in \C_0.$
\end{Lem}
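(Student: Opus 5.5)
The plan is to write down the anchor-plus-conormal map explicitly, using the strong Dirac (here: Poisson) property of $\widehat{\Phi}_0$, and then to check well-definedness, injectivity and surjectivity in turn. Fix $p\in\C_0$ and $x=\widehat{\Phi}_0(p)\in\S_0$.

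\textbf{Description of $\A_0|_x$.} Since $\lambda_{\X,0}=0$ and $\hat\gamma_0=0$, no gauge transformation is involved. Unwinding Definition \ref{gen red level}, I will first record that
\[
\A_{\S_0,0,\varphi_0}|_x=\{(w,\beta)\in \K_{0,x}:\ w\in \ker d_x\varphi_0\subset T_x\S_0,\ \beta|_{T_x\S_0}=0\}.
\]
The condition $\beta|_{T_x\S_0}=0$ is exactly what it means for $(w,\beta)$ to pull back to an element of $i^*\K_0$ that $d\varphi_0$ sends to $0\in L_Z$. Here $\K_0=L_{\pi_{\X,0}}$, so $w=\pi_{\X,0}^\sharp\beta$.

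\textbf{The map and well-definedness.} Define
\[
\Theta_p(w,\beta)=\bigl(\pi_0^\sharp(\widehat{\Phi}_0^*\beta),\ \widehat{\Phi}_0^*\beta\bigr).
\]
Because $\widehat{\Phi}_0$ is Poisson, $v=\pi_0^\sharp\widehat{\Phi}_0^*\beta$ is the unique lift with $d\widehat{\Phi}_0(v)=w$ and $(v,\widehat{\Phi}_0^*\beta)\in\L_0$; this is the strong Dirac action $\rho_M$ of the preliminaries. Clean intersection (H1) gives $T_p\C_0=d\widehat{\Phi}_0^{-1}(T_x\S_0)$. For $u\in T_p\C_0$ we then have $(\widehat{\Phi}_0^*\beta)(u)=\beta(d\widehat{\Phi}_0u)=0$, so $\widehat{\Phi}_0^*\beta\in T_p^\circ\C_0$. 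Hence $\Theta_p$ lands in $\L_0\cap T^\circ_p\C_0$, and it is clearly linear.

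\textbf{Surjectivity.} Take $(\pi_0^\sharp\xi,\xi)$ with $\xi\in T^\circ_p\C_0$. Since $\ker d\widehat{\Phi}_0\subset T_p\C_0$, the covector $\xi$ kills $\ker d\widehat{\Phi}_0$, so it factors through a linear form $\beta'$ on $\operatorname{im}d\widehat{\Phi}_0$. Because $\xi$ kills $d\widehat{\Phi}_0^{-1}(T_x\S_0)$, $\beta'$ vanishes on $\operatorname{im}d\widehat{\Phi}_0\cap T_x\S_0$. It therefore extends to a form on $\operatorname{im}d\widehat{\Phi}_0+T_x\S_0$ that is zero on $T_x\S_0$, and then to some $\beta\in T_x^*\X_0$. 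Set $w=\pi_{\X,0}^\sharp\beta=d\widehat{\Phi}_0(\pi_0^\sharp\xi)$. It remains to check $w\in\ker d\varphi_0$. I will obtain this from the level condition $d\varphi_0(i^*\K_0)=L_Z$: the element $(w,i^*\beta)=(w,0)$ lies in $i^*\K_0$, so $(d\varphi_0 w,0)\in L_Z$. This forces $d\varphi_0w=0$ when $L_Z$ is Poisson, which it is at $t=0$ since $\Z_0$ is a Poisson fiber.

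\textbf{Injectivity: the main obstacle.} Suppose $\widehat{\Phi}_0^*\beta=0$. Then $w=d\widehat{\Phi}_0(\pi_0^\sharp\widehat{\Phi}_0^*\beta)=0$, so it only remains to show $\beta=0$. We know $\beta$ annihilates both $T_x\S_0$ and $\operatorname{im}d_p\widehat{\Phi}_0$. So I need
\[
T_x\S_0+\operatorname{im}d_p\widehat{\Phi}_0=T_x\X_0
\]
along $\C_0$, at least modulo vectors that $\beta\in\K_0$ cannot see. I will try to get this in two steps. First, any $\beta$ killing $\operatorname{im}d\widehat{\Phi}_0$ with $(0,\beta)\in\K_0$ lies in $\Ker(\K_0)^\circ$ paired against the leaf directions. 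Second, I will combine this with the fact that the Poisson map $\widehat{\Phi}_0$ is transverse to the symplectic leaves it meets (the central-fiber content of (H1)). Equivalently, $\beta\in\ker\pi_{\X,0}^\sharp$ together with $\beta|_{T\S_0}=0$ should force $\beta=0$ from the constant-rank requirement on $\A$ in Definition \ref{gen red level}. Failing that, I would add the pointwise transversality explicitly, since it is the only input beyond the formal unwinding above.

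Once $\Theta_p$ is a linear isomorphism, the rank identity $\rk(\L_0\cap T^\circ_p\C_0)=\dim\A_{\S_0,\hat\gamma_0,\varphi_0}|_{\widehat{\Phi}_0(p)}$ follows immediately. The inequality $r(p)\le i(p)$ then follows by comparing this common rank with the isotropy dimension of the stabilizer action appearing in (H3).
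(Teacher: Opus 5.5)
Your proof has a genuine gap in the surjectivity step, and it cannot be closed, because the isomorphism as formulated is false. To place $(w,\beta)$ in $\A_{\S_0,\hat\gamma_0,\varphi_0}|_x$ you need $w\in T_x\S_0$: your own description of $\A|_x$ requires $w\in\ker d_x\varphi_0\subset T_x\S_0$. Your level-condition argument also needs it, since $(w,0)$ lies in $i^*\K_0$ only if $w$ is tangent to $\S_0$. But $w=d\widehat\Phi_0(\pi_0^\sharp\xi)$ lies in $T_x\S_0$ iff $\pi_0^\sharp\xi\in d\widehat\Phi_0^{-1}(T_x\S_0)=T_p\C_0$, and nothing forces this for an arbitrary $\xi\in T^\circ_p\C_0$.

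Concretely, take the central fiber of Example \ref{ex 5.1}:
\begin{itemize}
\item $\nu\colon T^*SU(2)\to\g^*$ the cotangent moment map, with $\S_0=\{y\}$, $y\neq 0$, $\hat\gamma_0=0$ and $\Z_0=\mathrm{pt}$;
\item then $\A|_y=\g_y$ has dimension $1$, while $T^\circ_p\C_0=\nu^*(T^*_y\g^*)$ has dimension $3$;
\item for $\xi=\nu^*\beta$ you get $w=\pm\ad^*_\beta y$, which vanishes only when $\beta\in\g_y$.
\end{itemize}
So no isomorphism exists, and with your reading of $r$ even $r(p)\le\iota(p)$ fails. The other reading, $\L_0\cap(0\oplus T^\circ_p\C_0)$, fares no better: it is $0$ here because $N$ is symplectic.

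The paper's proof does not get around this either. It identifies $\L_0\cap T^\circ_p\C_0$ with $\Lambda_p$, which is all of $T^\circ_p\C_0$ since $\L_0$ is a graph over $T^*\M_0$. It then asserts, without argument, that $\A$ maps isomorphically onto $\Lambda_p$. That suppresses exactly the two conditions you ran into.

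What your computation does prove is the following. Replace the target by $\L_0\cap(T_p\C_0\oplus T^\circ_p\C_0)$, the subspace whose anchor image is $\Ker(\jmath^*\L_0)_p$.
\begin{itemize}
\item $\Theta_p$ lands there, because $d\widehat\Phi_0(\pi_0^\sharp\widehat\Phi_0^*\beta)=w\in T_x\S_0$ and the intersection is clean.
\item Your surjectivity argument then goes through verbatim. The condition $\pi_0^\sharp\xi\in T_p\C_0$ gives $w\in T_x\S_0$, hence $(w,0)\in i^*\K_0$. The level condition together with $\Ker L_{\Z_0}=0$ then gives $d\varphi_0w=0$.
\item Your injectivity analysis is correct as far as it goes. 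The kernel is exactly $\{(0,\beta):\beta\in\mathrm{Ann}(T_x\S_0+\operatorname{im}d_p\widehat\Phi_0)\}$, and such $\beta$ automatically satisfy $\pi_{\X,0}^\sharp\beta=d\widehat\Phi_0\,\pi_0^\sharp\,\widehat\Phi_0^*\beta=0$.
\end{itemize}
This kernel is not controlled by the constant rank of $\A$ or by cleanness. Take $\M_0=\R^2$ symplectic, $\X_0=\R$ with zero Poisson structure, $\widehat\Phi_0\equiv 0$ and $\S_0=\{0\}$. Every hypothesis holds, yet $\A|_0=T^*_0\R$ while $T^\circ_p\C_0=0$.

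So the correct statement is a surjection $\A|_x\to\L_0\cap(T_p\C_0\oplus T^\circ_p\C_0)$. It is an isomorphism iff $\widehat\Phi_0$ is transverse to $\S_0$ at $p$. In general it gives only the inequality $\rk\le\dim\A|_x$; the equality needs transversality as an extra hypothesis.
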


\begin{proof}
Since $\L_0$ is Poisson, $\Ker(\L_0)= \L_0\cap T\M_0=0$, so the projection $\prTs: \L_0\to T^\ast M_0$, $(\pi_0^\sharp\alpha,\alpha)\mapsto\alpha$, is a
bundle isomorphism onto $T^\ast M_0$; write $\kappa=(\prTs|_{L_0})^{-1}$ for its
inverse, $\kappa(\alpha)=(\pi_0^\sharp\alpha,\alpha)$.
 
Under $\prTs$, the subspace $\L_0\cap T^{\circ}_{p}\C_0$ maps isomorphically onto
\[
  \Lambda_p:=\big\{\alpha\in T^{\circ}_{p}\C_0 :
   (\pi_0^\sharp\alpha,\alpha)\in L_0\big\}
   =\prTs\big(\L_0\cap T^{\circ}_{p}\C_0\big),
\]
the last equality because every element of $\L_0$ has the form
$(\pi_0^\sharp\alpha,\alpha)$.
 
We now identify $\Lambda_p$ with the pulled-back annihilator data downstairs. Since $\Phib_0$ is a Poisson map and $\lambda_{\X,0}=0$, the strong-Dirac transversality
$\Ker(d\Phib_0)\cap\Ker(\L_0)=0$ holds trivially, so $\Phib_0$ induces the Lie-algebroid action of \cite[Section 2]{bualibanu2026reduction}. The only place the general
($\lambda\neq0$) correspondence uses the twist is the mixed term of the Dorfman bracket; at $t=0$ this term is
\begin{equation}\label{eq:dorfman}
  \iota_{v\wedge w}\lambda_{\X,0}=0
  \qquad\text{ for all } v,w,
\end{equation}
because $\lambda_{\X,0}=0$. Thus the bracket restricts to the untwisted Dorfman bracket on $T^\circ_{p}\C_0$, and the strong-Dirac correspondence identifies
\[
  \Lambda_p \;\cong\; \Phib_0^\ast\big(\K_0\cap T^{\circ}_{\Phib_0(p)}\S_0\big),
\]
where $T^{\circ}\S_0=\ker(di_0)^\ast$ and $i_0\colon \S_0\hookrightarrow \X_0$ is the inclusion.
 
Finally, by \cite[Definition 2.2]{bualibanu2026reduction} the stabilizer subalgebroid on the central
level is
\[
  \A_{\S_0,\gammab_0,\varphi_0}
   =\K_0\cap (i_0)_{*} \K_{\gammab_0}\cap(\ker d\varphi_0\oplus T^\ast \X_0),
\]
and with $\gammab_0$ closed the gauge term drops out, so the anchor-plus-conormal map sends $\A_{\S_0,\gammab_0,\varphi_0}\big|_{\Phib_0(p)}$ isomorphically onto $\Lambda_p$. Composing with $\kappa$ gives the asserted isomorphism onto $\L_0\cap T^{\circ}_{p}\C_0$. Taking dimensions yields $r(p)=\iota(p)$, and in particular $r(p)\le\iota(p)$.
\end{proof}

\begin{proof}(of Theorem \ref{thm:main})
We proceed in three steps.\\

\noindent\textbf{Step 1: Fiberwise application of Theorem \ref{reduction th}} For each $t\neq 0,$  the map $\widehat{\Phi}_t: (\M_t,\L_t,\lambda_{\M,t}) \too (\X_t,\K_t,\lambda_{\X,t})$ is a strong Dirac map by the condition (1) of Definition \ref{Dirac def}, and $(\S_t,\widehat{\gamma}_t,\varphi_t)$ is a generalized reduction level of $\X_t$ by Definition \ref{defo level}. The clean intersection hypothesis $(H1)$ and the manifold condition $(H3)$ ensure that Theorem \ref{reduction th} applies to each fiber. Define, \[
\mathcal{C}_t = \widehat{\Phi}_{t}^{*}(\S_t), \quad L_{\C_t} \coloneqq \tau_{-\widehat{\Phi}_{\C_t}^{*}\widehat{\gamma}_t} j_{t}^{*} \L_t,
\]
where $j_{t}: \C_t \too \M_t$ is the inclusion. By Theorem \ref{reduction th}, the push-forward $L^{\text{red}}_t \coloneqq (\pi_t)_{*}L_{\C_t}$ is a Dirac structure on $\Q_t,$ and $\bar{\varphi}_t: \Q_t \too \Z_t$ is a strong Dirac map.\\ At $t = 0,$ $\widehat{\Phi}_0$ is a Poisson map (hence a strong Dirac map without twisting), and $(\S_0,\X_0,\varphi_0)$ is a reduction level of the Poisson manifold $\X_0.$ By \cite[\text{Remark 2.2(1)}]{bualibanu2026reduction}, when $\Z_0$ is a point, the reduced Dirac structure $\L^{\text{red}}_0$ is Poisson. More generally, $\bar{\varphi}_0$ is a Poisson map to $\Z_0.$\\

\noindent\textbf{Step 2: Smoothness of $\mathcal{L}^{\mathrm{red}}$ across $t=0$.}
The total preimage $\C=\Phib^{-1}(\S)$ is a smooth submanifold of $\M$ by (H1), and $\pi_\M|_\C$ is a
surjective submersion, so $\C$ fibers smoothly over $\mathbb{R}$ with fibers $\C_t$. We show that the pullback $\jmapb^{\ast}\L$ is a smooth Lagrangian subbundle of $\mathbb{T}\C$. Smoothness of $\L$ as a subbundle of $\mathbb{T}M$ holds by the Dirac
deformation hypothesis.
 
By \cite[Proposition 5.6, Example 5.7]{bursztyn2013brief}, the backward image
$\jmapb^{\ast}\L=B_{\jmapb}(\L)$ is a smooth fiberwise Lagrangian subbundle of $\mathbb{T}\C$ precisely when the clean-intersection quantity $\L\cap T^{\circ}\C$ has locally constant rank, where $T^{\circ}\C = \ker(d\jmapb_t)^\ast\subset T^\ast \M_t$ is the
fiberwise conormal of $\C_t$ in $\M_t$; being Lagrangian, $\jmapb^{\ast}\L$ then automatically has rank $\dim \C_t$. It therefore suffices to prove that
\[
  r(p):=\rk\big(\L_p\cap T^{\circ}_{p}\C\big),\qquad p\in \C,
\]
is locally constant on the \emph{total space} $\C$ -- not merely along each fiber. This is the delicate point: as an intersection of the two smooth subbundles $\L$ and
$T^{\circ}\C$ of $\mathbb{T}\M|_\C$, the function $r$ is \emph{a priori} only upper-semicontinuous, so a rank constant on every fiber may still jump upward as $t\to 0$.
 
\medskip

\noindent\textbf{(i) $r$ is upper-semicontinuous.}
The sum $\L+T^{\circ}\C=\mathrm{im}\big(\L\oplus T^{\circ}\C\to\mathbb{T}\M|_\C\big)$ is the image
of a bundle map, so its rank is lower-semicontinuous on $\C$
\cite[Section 1.1]{bursztyn2013brief}. Since $\L$ and $T^{\circ}\C$ are subbundles of constant rank
-- the former by the Dirac deformation hypothesis, the latter because $\C$ is a fixed submanifold, so its fiberwise conormal has constant rank $\operatorname{codim}
\C_t$ -- the identity
\[
  \rk\big(\L\cap T^{\circ}\C\big)
   =\rk \L+\rk T^{\circ}\C-\rk\big(\L+T^{\circ}\C\big)
\]
exhibits $r$ as a constant minus a lower-semicontinuous function. Hence $r$ is upper-semicontinuous on $\C$: it can only jump upward, and a value constant on each
fiber $\C_t$ for $t\neq0$ can only increase as $t\to 0$. Excluding such an upward jump on the central fiber is the whole difficulty.
 
\medskip
\noindent\textbf{(ii) The comparison function $\iota$ is locally constant on $\C$.}
For each $t\neq 0$, the strong Dirac condition
$\Ker(d\Phib_t)\cap \Ker(\L_t)=0$ together with the clean intersection (H1) implies, by \cite[Proposition 2.17]{bualibanu2026reduction}, that
\begin{equation}\label{eq:star}
  r(p)=\iota(p):=\dim\A_{\S_t,\gammab_t,\varphi_t}\big|_{\Phib_t(p)},
  \qquad p\in \C_t,\ t\neq 0,
\end{equation}
the identification being the restriction of the anchor-plus-conormal map, so that
$\L_t\cap T^{\circ}\C_t$ at $p$ is the fiber of the stabilizer subalgebroid over $\Phib_t(p)\in \S_t$. By Definition \ref{defo level}, the fiberwise stabilizers $\A_{\S_t,\gammab_t,\varphi_t}$ assemble into a smooth subalgebroid $\A$ over the
total space $\S$; being a smooth subbundle, its fiber dimension $s\mapsto \dim\A_s$ is locally constant on $\S$. Composing with the smooth map $\Phib|_\C : \C\to \S$ shows
that $\iota=\dim\A\circ\Phib$ is locally constant on all of $\C$, including the central fiber.
 
\medskip
\noindent\textbf{(iii) Two inequalities on the central fiber.}
Fix $p_0\in \C_0$.
 
\emph{Lower bound $r(p_0)\ge\iota(p_0)$.} By~\eqref{eq:star}, $r\equiv\iota$ on the dense open set $\{t\neq0\}$, and $\iota$ is locally constant by (ii), so
$\iota(p)=\iota(p_0)$ for all $p$ near $p_0$; hence $r(p)=\iota(p_0)$ for $p$ near
$p_0$ with $t\neq0$. Upper-semicontinuity of $r$ from (i) then gives
\[
  r(p_0)\ \ge\ \limsup_{p\to p_0} r(p)\ =\ \iota(p_0).
\]
 
\emph{Upper bound $r(p_0)\le\iota(p_0)$.} This is exactly the inequality supplied by Lemma \ref{lem:central}, which identifies $\A_{\S_0,\gammab_0,\varphi_0}|_{\Phib_0(p_0)}$
with $\L_0\cap T^{\circ}_{p_0}\C_0$ on the central Poisson fiber.
 
Combining the two, $r(p_0)=\iota(p_0)$ for every $p_0\in \C_0$. Together with \eqref{eq:star} on $\{t\neq0\}$, this gives $r=\iota$ pointwise on all of $\C$.
Since $\iota$ is locally constant by (ii), so is $r$. This is precisely the cross-fiber constancy recorded by hypothesis (H3), and it is exactly the condition needed for $\jmapb^{\ast}\L$ to be a smooth subbundle of $\mathbb{T}\C$.
 
\medskip
\noindent\textbf{(iv) Assembly.}
The gauge transformation $\tau_{-\Phib_C^\ast\gammab}$ is smooth since $\gammab$ is a
smooth fiberwise $2$-form. Therefore
\[
  \L^{\C}:=\tau_{-\Phib_\C^\ast\gammab}\,\jmapb^{\ast}\L
\]
is a smooth fiberwise Lagrangian structure on $\C$.
 
By (H2), the fiberwise quotient maps $\pi_t: \C_t \to \Q_t$ assemble into a smooth map $\pi\colon \C \to \Q$. The orbit distribution of the stabilizer action on $\C$ has
locally constant rank: at each point its dimension equals the rank of $\A_{\S_t,\gammab_t,\varphi_t}$ (locally constant in $t$ by Definition 4) minus the isotropy dimension $\iota$, which is locally constant on $\C$ by (ii)--(iii) above.
(Thus the smoothness of the orbit distribution, and hence of $\Lred$ on $Q$, is not
independent of the $t=0$ analysis: it reuses the cross-fiber constancy of $\iota$
just established.) Combined with the smoothness of the family
$\{\A_{\S_t,\gammab_t,\varphi_t}\}$ from Definition \ref{defo level}, this yields a smooth distribution on $\C$. Therefore the fiberwise pushforward
\[
  \Lred:=\pi_\ast \L^{\C}
\]
is a smooth fiberwise Lagrangian structure on $\Q$. \qed

\noindent{\textbf{Step 3: Smoothness of $\Bar{{\widehat{\Phi}}}$ and the Dirac deformation structure.}}
The composition $\widehat{\Phi}_t \circ \varphi_t: \C_t \too \Z_t$ is smooth for each $t$ (both $\widehat{\Phi}$ and $\varphi$ are smooth). By \cite[\text{Proposition 2.20}]{bualibanu2026reduction}, its kernel contains the orbit distribution of $A_{\S,\widehat{\gamma}_t,\varphi_t}.$ Then, for each $t$, the composition $\widehat{\Phi}_t \circ \varphi_t$ descends to a smooth map $\Bar{\widehat{\Phi}}_t: \Q_t \too \Z_t$ and together assemble to a smooth map $\Bar{\widehat{\Phi}}: \Q \too \Z.$ 
The fiberwise twisting on $\Q$ is $\lambda_{Q,t} = (\pi_{t})_*(\widehat{\Phi}^{*}_{\C_t}(i_{t}^{*} \lambda_{X,t} + d\widehat{\gamma}_t))$ by Theorem \ref{reduction th}(1). Since all ingredients are smooth in $t$ and $\lambda_{\X,0} = 0$ and $d\widehat{\gamma}_0 = 0,$ we have $\lambda_{\Q,0} = 0.$ Now, we verify the conditions of Definition \ref{Dirac def}: \begin{itemize}
    \item For every $t\neq 0,$ $\Bar{\widehat{\Phi}}_t$ is a strong Dirac map by Theorem \ref{reduction th}(2). 
    \item At $t = 0,$ $\Bar{\widehat{\Phi}}_0$ is Poisson (strong Dirac map with zero twisting).
    \item $(\Q_1,\L^{\text{red}}_1) \cong (\Phi^{-1}(S)/A_{S,\gamma_1,\varphi_1})$ with the reduced Dirac structure and $(\Q_0,\L^{\text{red}}_0) \cong \varphi^{-1}(\widetilde{S})/A_{\widetilde{S},\tilde{\gamma},\tilde{\varphi}}$ as Poisson manifolds.
\end{itemize}
\end{proof}

\begin{Rem}
When $\Z_t = \{\text{pt}\}$ for every $t$ (ordinary reduction level), the reduced Dirac structure is Poisson at every fiber, and the Dirac deformation of reduced spaces becomes a deformation of Poisson manifolds. 
\end{Rem}

\begin{Rem}
    Throughout Section \ref{F example}, we work under the standard regularity hypothesis on moment map values that already underlie the reduction theorem of \cite[Section 2.1]{bualibanu2026reduction}. Under these conditions, hypothesis (H1), (H2) and (H3) are satisfied in each of the examples below. Example \ref{conjugacy examp} requires in addition the orbit-dimension constancy hypothesis
$\dim \mathfrak{g}_{e^{tx}} = \dim \mathfrak{g}_x$ for all $t$, which is the
condition that the conjugacy class $C_{e^{tx}}$ and the coadjoint orbit
$\mathcal{O}_x$ have the same dimension; this holds for $x$ regular semi-simple, and more generally on the regular locus.
\end{Rem}

\section{Fundamental examples} \label{F example}
Now we apply Theorem \ref{thm:main} to each class of reduction from \cite[\text{Section 2.4}]{bualibanu2026reduction}.

\begin{Exp}(Deformation of quasi-Poisson reduction at a point). \label{conjugacy examp} \label{ex 5.1} Let $\Phi: M\too G$ be a Hamiltonian quasi-Poisson moment map and $(\M,\G,\widehat{\Phi})$ a Dirac deformation to a Hamiltonian Poisson manifold $\nu: N \too \g^*.$ By \cite[\text{Example 2.5}]{bualibanu2026reduction}, any point $x\in G$ is a reduction level with stabilizer sub-algebroid $A_x = L_{G,x}$ (the isotropy Lie algebra of $L_G$ at $x$). Let $y$ be a regular value of $\nu.$ For $t$ sufficiently small, $e^{ty}$ is again a regular value of $\Phi$ such that $L_{G,e^{ty}} \cong \g_y$ and $L_{\g^{*},y} \cong \g_y.$ In the local coordinates near the central fiber of $\G = \D(G,\{1\}),$ $e^{ty}$ deforms to $y.$ Consider $\S = \{(e^{ty},t), t\neq 0\} \cup \{(y,0)\}.$ Then, $\S$ is a smooth submanifold of $\G.$ For each $t,$ The two-form $\widehat{\gamma}$ is defined to be  $\widehat{\gamma}_t = 0$ for all $t$ and $\varphi_t:\S_t \too \{\text{pt}\}.$  By \cite[\text{Example 2.26}]{bualibanu2026reduction}, for each $t\neq 0,$ the quotient $\widehat{\Phi}_{t}^{-1}(e^{ty})/\g_y$ is a smooth manifold equipped with its Poisson structure that arise from the quasi-Poisson reduction. At $t = 0,$ it is $\widehat{\Phi}_{0}^{-1}(y)/\g_y$ (Marsden-Weinstein reduction). Theorem \ref{thm:main} says: the quasi-Poisson reduction $\Phi^{-1}(e^{y})/\g_y$ admits a Dirac deformation to the Marsden-Weinstein reduction $\nu^{-1}(y)/\g_y.$
\end{Exp}

\begin{Exp}(Deformation of reduction along conjugacy classes to co-adjoints orbits.)
We keep the same source data $(M,\pi,\Phi)$ as in the previous example. Fix $x\in \g$ a regular value and let $\O = C_{e^x}\subset G$ be the conjugacy class of $e^{x}$ in $G$ Similarly, let $\O_x$ be the co-adjoint orbit of $x$ in $\g^*$. By \cite[\text{Example 2.7}]{bualibanu2026reduction}, $(\O,\omega_O)$ is a reduction level of $G$ with $A_{O,\omega_O} = L_{G}|\O \cong O \rtimes \g.$ The Source, the target and the strong Dirac map are the same as in example \ref{ex 5.1}. Define $\S$ to be \[
\S \coloneqq \{(ge^{tx}g^{-1},t), g\in G, t\in I^*\} \cup \{(\Ad_{g}x,0), g\in G\}.
\]

In the $\psi$-coordinates, we have $\psi^{-1}(\S) = \O_x \times I,$ where $\O_x = {\Ad_{g}x, g\in G}.$ Then, $\S$ is a smooth submanifold. By \cite[\text{Theorem 4.2}]{burelle2026deformations}, the conjugacy class $\S_t = C_{e^{tx}} \in \G_t$ deforms to the co-adjoint orbit $\S_0 = \O_x.$ Moreover, there exists a smooth two form $\widehat{\gamma}$ such that $\widehat{\gamma}_t = \dfrac{\omega_{e^{tx}}}{t}$ for $t\neq 0$ and it extends smoothly to the KKS-symplectic form $\omega_{\O_x}.$ Finally, $A_{\S_t} = \S_t \rtimes \g$ for each $t$ and the rank of the isotropy algebras is constant for each $t.$ Theorem \ref{thm:main} applied to a quasi-Poisson moment map $\Phi:M\too G:$ The quasi-Poisson reduction $\Phi^{-1}(\O)/\g$ \cite[\text{Example 2.29}] {bualibanu2026reduction} admits a Dirac deformation to the Marsden-Weinstein reduction $\nu^{-1}(\O_x)/\g$ along the coadjoint orbit \cite[\text{Example 2.28}]{bualibanu2026reduction}.  
\end{Exp}

\begin{Exp}(Steinberg and Kostant slices)
Let $G$ be a simply connected semi-simple complex Lie group of rank $r.$ The Steinberg cross-section is \[
\Sigma = \{e_1^{z_1}\cdot e_2^{z_2} \cdot \cdot \cdot e_{r}^{z_r}\cdot \dot{w}_0^{-1}: z_i \in \mathbb{C}\} \subset G,
\]
where $e_1,....,e_r$ are simple roots vectors and $w_0 = s_1\cdot \cdot \cdot \cdot s_r$ is a Coxeter element representative. Its additive counterpart is the Kostant slice \[
\Sigma_0 = \{e_{-\theta} + z_{1}e_{1} +....+ z_{r}e_r: z_i \in \mathbb{C}\} \subset \g^{*}.
\]

Both are Poisson transversal in the Dirac-geometric sense \cite[\text{Example 2.30}]{bursztyn2007dirac}: For any point $g\in G,$ the generalized tangent space splits as $\mathbb{T}_{g}G = L_{G,g} \oplus (T_{g}\Sigma \oplus T_{g}^{*}\Sigma),$ and similarly for $(\g^{*},\pi_{KKS})$ along $\Sigma_0.$ \\

As to the deformation, one sets $\S_t = \Sigma$ for all $t\neq 0$ and $\S_0 = \Sigma_0.$ The smoothness of this family inside $\D(G,\{1\})$ is the Sevostyanov interpolation \cite{sevostyanov2011algebraic} and the BCH formula: \[
\dfrac{1}{t}(\text{log}(e^{tz_1}\cdots e^{tz_r}\cdot \dot{w}_{0}^{-1})) \too e_{-\theta}  + \sum_{i} z_ie_i.
\]

Moreover, all data are trivial: $\widehat{\gamma}_t = 0, \varphi_t = \mathrm{Id}$ so the target $\Z_t = \S_t$ and most importantly $A_{\S_t} = 0$ by the Poisson transversal axioms itself. So no quotient is needed and the reduced space $\Q$ is already a manifold with $\Q_t = \Phi^{-1}(\Sigma), t\neq 0$ and $\Q_0 = \nu^{-1}(\Sigma_0).$ The reduced Dirac structure is the pure backward image $\L^{\text{red}}_t = j_{t}^{*}\L_t$, with no gauge transformation, and no push-forward quotient. The axioms $(H1)-(H3)$ are automatic. As a consequence, Theorem \ref{thm:main} says that $(\Phi^{-1}(\Sigma),\pi_{|\Sigma})$ admits a Dirac deformation to $(\nu^{-1}(\Sigma_0),(\pi_{0})_{|\Sigma_0}).$ 
\end{Exp}

\begin{Exp}(Deformation data for parabolic deformation) Let $G$ be a complex reductive group, $P = LU$ a parabolic with Levi $L$ and unipotent subgroup $U.$ We keep the same source data $(M,\pi,\Phi).$ By \cite[\text{Example 2.12}]{bualibanu2026reduction}, $U$ is an ordinary reduction level of $G$ with stabilizer subalgebroid $A_U = U \ltimes \mathfrak{p}.$ Let $\S_t = U, t\neq 0$ and $\S_0 = \mathfrak{u}.$ In the deformation space $\D(G,\{1\}),$ this familly is smooth: $\exp: \mathfrak{u} \too U$ is a diffeomorphism (simply connected unipotent group) and $\phi(\xi,t) = (\exp(t\xi),t)$ parametrizes $\S$ for $\xi \in \mathfrak{u}.$ Moreover $\widehat{\gamma}_t = 0, \forall t$ and $A_{\S_t} = U \ltimes \mathfrak{p}$ for $t\neq 0$ deforms to $A_{\S_0} = \mathfrak{u} \ltimes \mathfrak{p},$ with constant dimension for each fiber. Theorem \ref{thm:main} says: For any quasi-Poisson moment map $\Phi: M\too G,$ the quasi-Poisson reduction $\Phi^{-1}(U)/P$(multiplicative parabolic reduction \cite[\text{Example 4.5}]{bualibanu2026reduction})  admits a Dirac deformation to $\nu^{-1}(\mathfrak{u})/P$ (Hamiltonian parabolic reduction \cite[\text{Example 2.31}]{bualibanu2026reduction}). In particular, when $\mathbf{D}(G)$ (the internal fusion of the double $D(G)$), the multiplicative cotangent bundle $G \times_{U} P$ of the partial flag variety $G/P$ (\cite[\text{Example 4.5}]{bualibanu2026reduction}) deforms to the ordinary cotangent bundle $T^{*}(G/P) = G\times_{P} \mathfrak{u}.$
\end{Exp}

\begin{Exp}(Unipotent reduction)
We keep the same notations of the last example, and let $L = P/U$ be the Levi quotient with projection $\varphi: P \too L.$ By \cite[\text{Example 2.13}]{bualibanu2026reduction}, $(P,\varphi)$ is a generalized reduction level of $G$ with stabilizer algebroid $P \rtimes \mathfrak{u}.$ Define $\S$ as follows: $\S_t = P, t\neq 0$ and $\S_0 = \mathfrak{p}.$ Within the deformation space $\D(G,\{1\}),$ the restriction $\exp_{\mathfrak{p}}: \mathfrak{p} \too P$ is a local diffeomorphism which is needed to assure that $\S$ is smooth near the central fiber. \\  

Also, the projection $\varphi_t: P\too L$ deforms to $\varphi_0: \mathfrak{p} \too \mathfrak{l}$ (the linearized projection $\mathfrak{p} = \mathfrak{l}\oplus \mathfrak{u} \too \mathfrak{l}$.) The stabilizers $A_{\S_t} = P \ltimes \mathfrak{u}$ for $t \neq 0$ deform to $A_{\S_0} = \mathfrak{p} \ltimes \mathfrak{u}.$ Then, with $\widehat{\gamma}_t = 0$ for each $t$,  Theorem \ref{thm:main} says: For any quasi-Poisson moment map $(\mu,\Phi): M \too K\times G,$ the $K\times L$ quasi-Poisson structure on $\Phi^{-1}(P)/U$ (see \cite[\text{Example 4.6}]{bualibanu2026reduction}) admits a Dirac deformation to the $K\times L$ Hamiltonian Poisson structure on $\nu^{-1}(\mathfrak{p})/U$ \cite[\text{\text{Example 2.32}}]{bualibanu2026reduction}. In particular, applying this to the internal fusion of the double $D(G)$ yields the multiplicative cotangent bundle $G \times_{U}P$ of the affine base space $G/U$ (\cite[\text{Example 4.6}]{bualibanu2026reduction}) that deforms to $T^{*}(G/U) = G\times_{U} \mathfrak{p}$ (\cite[\text{Example 2.32}]{bualibanu2026reduction}).
\end{Exp}

\begin{Exp}(Deformation of quasi-Hamiltonian implosion to symplectic implosion.) Let $G$ be a simply connected compact Lie group with maximal torus $T$. For each subset $J\subset \Delta$ of simple roots, let $S_J$ be the face of the dominant alcove such that $0\in \bar{S}_{J}$ and $\sigma_{J} \subset \t^{*}$ the face of the Weyl chamber $\t^{*}_{+}.$ By \cite[\text{Example 2.15}]{bualibanu2026reduction}, $(S_{J},\varphi_J)$ is a generalized reduction level of $G$ with stabilizer algebroid $S_{J} \rtimes [\g_{J},\g_{J}],$ and by \cite[\text{Example 2.14}]{bualibanu2026reduction}, $(\sigma_J,\varphi_{0,J})$ is a generalized reduction level of $\g^{*}$ with stabilizer algebroid $\sigma_{J} \rtimes [\g_{J},\g_{J}].$ In the deformation space $D(G,{1}),$ the face $S_{J}$ deforms to $\sigma_{J}$(\cite[\text{Lemma 4.8}]{maiza2026multiplicative}). We define $\S_J$ to be $(\S_J)_t = S_J, t\neq 0$ and $(\S_J)_0 = \sigma_J$. Then, $\S_J$ is smooth. Moreover, the stablizer subalgebroid $S_J \rtimes [\g_J,\g_J]$ is constant in $t$. Then, Theorem \ref{thm:main} applied stratum by stratum gives: If $\Phi: M\too G$ is a quasi-Hamiltonian moment map, then the quasi-Hamiltonian imploded cross section \[
M_{\imp} = \bigsqcup_{J} \Phi^{-1}(J)/[G_J,G_J],
\]  
admits a generalized Dirac deformation to the imploded cross-section of the symplectic manifold $N$ \[
N_{\imp} = \bigsqcup_{J} \nu^{-1}(\sigma_J)/[G_J,G_J].
\] 
This recovers \cite[\text{Theorem 5.3}]{maiza2026multiplicative} from a Dirac geometry perspective.
\end{Exp}

\begin{Exp}(Sevostyanov slices and Whittaker reduction). Let $G$ be a complex reductive semi-simple Lie group, $w\in W$ an element in the Weyl group and $\dot{w} \in Z_{G}(T)$ a lift. The Sevostyanov slice is $\Sigma_{w} = U^{+}\cdot w\cdot Z_{\dot{w}^{-1}}.$ By \cite[\text{Proposition 4.16}]{bualibanu2026reduction}, $\Sigma_w$ is a generalized reduction level with $\gamma = 0,$ $\varphi: \Sigma_w \too Z_{\dot{w}^{-1}}$ is the projection and trivial stabilizer.\\

The additive counterpart is the Slodowy slice $\Sigma_{w,0} = e_{-w} + \g^{e_w} \subset \g^{*}$ where $e_{-w} = \sum_{\alpha \in R^{+}\cap w(R^{-1})} e_{-\alpha}.$ One sets $\S_t = \Sigma_w, t\neq 0$ and $\S_0 = \Sigma_{w,0}.$ The deformation within $\G = \D(G,\{1\})$ is the content of Sevostyanov \cite{sevostyanov2011algebraic} and the BCH formula: the exponential parametrization $\xi \too e^{t\xi}$ maps diffeomorphically $\Sigma_{w,0}$ onto $\Sigma_{w}$ near $\dot{w}^{-1}$ for each $t\neq 0,$ and $Z_{\dot{w}^{-1}}$ deforms to $Z_0 = \g^{e_{-w}} \cap \t$ with all trivial data $\gamma = 0$ and the subalgebroid stabilizer is trivial. Now, consider the $U^{+}$ saturation \[ \Theta_{w} = U^{+}\cdot \Sigma_w \subset G,\] which is a larger generalized reduction level with stabilizer $\Theta_w \ltimes \mathfrak{u}^{+}.$ This deforms to \[
\Theta_{w,0} = \mathfrak{u}^{+} + \Sigma_{w,0} \subset \g^{*},
\]  
with stabilizer $\Theta_w \ltimes \mathfrak{u}^{+},$ of constant rank. Moreover, the non-trivial 2-form $\gamma_{\Theta_w}$ of \cite[Proposition 4.19]{bualibanu2026reduction} defined by \[
\gamma_{\Theta_w} = \gamma_g(x^R_1 + z^R_1 + y^L_1, x^R_2 + z^R_2 + y^L_2) = \dfrac{1}{2} \langle x_2 + z_2, \Ad_{g}y_1 \rangle_{\g} - \dfrac{1}{2} \langle x_1 + z_1, \Ad_{g}y_2 \rangle_{\g} 
\] 
deforms to $\gamma_{\Theta_{w,0}}$ defined by \[
\gamma_{\Theta_{w,0}} = \gamma_0(x_1 + y_1 + z_1, x_2 + y_2 + z_2) = \dfrac{1}{2} \langle x_1 + z_2, [y,y_1] \rangle_{\g} - \dfrac{1}{2} x_2 + z_2, [y,y_2] \langle \rangle_{\g},
\]
through the family $\widehat{\gamma}_t = \dfrac{\gamma_{e^{ty}}}{t}.$
Here, reducing along $\Sigma_w$ directly and reducing along $\Theta_w$ then quotienting by $U^{+}$ is the same \[
\Phi^{-1}(\Sigma_w) \cong \Phi^{-1}(\Theta_w)/U^{+}, 
\]
known as the Whittaker reduction. Then, Theorem \ref{thm:main} says that for any quasi-Poisson map $\Phi: M\too G,$ the $Z_{\dot{w}^{-1}}$ quasi-Poisson structure on $\Phi^{-1}(\Sigma_w) \cong \Phi^{-1}(\Theta_w)/U^{+}$ deforms to $\nu^{-1}(\Sigma_{w,0}) \cong \nu^{-1}(\Theta_{w,0})/U^{+}$ (Hamiltonian $Z_0$-space). 
\end{Exp}

\begin{Exp}(Deformation of a quasi-Poisson reduction by reducible group.) Let $H\subset G$ be a reducible subgroup (\cite[\text{Definition 4.1}]{bualibanu2026reduction}), i.e., $\h^{\perp}$ is also a Lie subalgebra and consider the projection $\varphi: H \too \Bar{H} = H/(H\cap H^{\perp}).$ By \cite[\text{Lemma 4.3}]{bualibanu2026reduction}, $(H,\varphi)$ is a generalized reduction level of $G$ with stabilizer subalgebroid $H \ltimes \h^{\perp}.$ Define $\S_t = H, t\neq 0$ and $\S_0 = \h.$ $\S$ is a smooth submanifold, parametrized by $\psi$-coordinates restricted to $\h \times \R.$ Moreover, $\gamma_H = 0$ so we consider $\widehat{\gamma}$ to be identically zero. Within the deformation space $\D(G,\{1\}),$ $H$ deforms to $\h$ and $\h^{\perp}$ to itself ( $\h^{\perp}$ depends only on the scalar product $\langle \rangle_{\g}.$). Finally, the projection $\varphi: H \too \Bar{H}$ deforms to the linear projection $\h \too \Bar{h} = \h/h\cap \h^{\perp}.$ Theorem \ref{thm:main} gives: For any quasi-Poisson moment map $(\mu,\Phi): M \too K\times G,$ the $K\times \Bar{H}$ quasi-Poisson structure on $\Phi^{-1}(H)/H^{\perp}$ \cite[\text{Proposition 4.4}]{bualibanu2026reduction} admits a Dirac deformation to the $K\times\Bar{h}$-Hamiltonian structure on $\nu^{-1}(\h)/\h^{\perp}.$
\end{Exp}

\begin{Exp}(Universal reduced space.) Let $\Gamma \rightrightarrows X$ be a pre-symplectic groupoid, $(S,\gamma,\varphi)$ a reduction level of $X$ and $\H$ a source-connected subgroupoid that integrates the stabilizer subalgebroid $A_{S,\gamma,\varphi}.$ By \cite[\text{Proposition 3.7}]{bualibanu2026reduction}, the reduction of $\Gamma$ at $(S,\gamma,\varphi)$ gives a universal reduced space $\Gamma_S$ such that any reduction of a strong Dirac map $\Phi: M\too X$ at $S$ is Dirac diffeomorphic to $(M\times_{X} \Gamma_S)/\Gamma.$ When $X = G$ with its Cartan-Dirac structure and $\Gamma = D(G),$ the universal reduced space is $D(G) = (G\times S)/\H$ (\cite[\text{Corollary 3.9}]{bualibanu2026reduction}). The deformation of $D(G)$ to the cotangent bundle $T^{*}G$ induces a deformation of $D(G)_S$ to $(T^{*}G)_{S_0},$ where $S_0$ is the additive reduction level. Moreover, the deformation is universal : $\Phi^{-1}(S)/\H \cong (M\times_{G} D(G)_S)/D(G)$ deforms to $\nu^{-1}(S_0)/\mathcal{H}_0 \cong (N\times_{\g^{*}} (T^{*}G)_{S_0})/T^{*}G.$ Here $\mathcal{H}_0$ is the source-connected subgroupoid that integrates $A_{S_0,\widehat{\gamma}_0,\varphi_0}.$ 
\end{Exp}

\section{Dirac deformation and infinitesimal Morita equivalence of Lie algebroids} \label{section 6}

In this section, we define a deformation theory for quasi-symplectic groupoids compatible with a Dirac deformation. Given a smooth family of Dirac manifolds $(\X,\K,\lambda_{\X},\pi_\X)$, we show that it integrates to a smooth Lie groupoid under specific conditions. Finally,
We prove that the Dirac deformation theory (Definition \ref{Dirac def}) is compatible with the infinitesimal Morita equivalence theory in the sense of \cite{xu2004momentum}. Throughout, $G$ denotes a compact Lie group with Lie algebra $\g$ equipped with an $\Ad$-invariant scalar product $\langle \cdot,\cdot \rangle_{\g}.$

\subsection{Quasi-symplectic groupoids.} We recall the definitions following \cite[\text{Section 1.1}]{bualibanu2026reduction} and \cite{xu2004momentum,bursztyn2004integration}.

\begin{Def}
    \textit{A quasi-symplectic groupoid} (or a twisted presymplectic groupoid) over an $\eta$-twisted Dirac manifold $(X,L_X)$ is a Lie groupoid $(s,t): \Gamma \rightrightarrows X$ together with a multiplicative two form $\Omega \in \Omega^{2}(\Gamma)$ such that: \begin{enumerate}
        \item $d\Omega = s^{*}\eta - t^{*}\eta.$
        \item $\dim(\Gamma) = 2\dim(X).$
        \item $\Ker (\Gamma) \cap \Ker (ds) \cap \Ker (dt) = 0.$
    \end{enumerate}
    The Dirac structure $L_X$ is recovered from $\Gamma$ as the image of the map \[
    (\rho,\varphi): \Lie(\Gamma)\too TX \oplus T^{*}X, \quad a \too (\rho(a),\Omega^{\flat}(a)|TX),
    \]
    where $\rho: \Lie(\Gamma) \too TX$ is the anchor map. The induced map $\Lie(\Gamma)\too L_X$ is an isomorphism of Lie algebroids.
\end{Def}

\begin{Exp}(\cite[\text{Example 1.4}]{bualibanu2026reduction})
The Cartan Dirac structure $L_G = \{((\xi^{L} - \xi^{R}),\sigma(\xi))\}$ on $G$ is integrated by the double $D(G) = G\times G \rightrightarrows G$ with source $s(a,b) = \Ad_{a}b$ and $t(a,b) = b^{-1},$ and presymplectic form \[
\omega_{D(G)} = \dfrac{1}{2} \Big\{ \langle \Ad_{b}a^{*}\theta^{L} \wedge \theta^{L} \rangle + \langle a^{*}\theta^{L} \wedge (b^{*}\theta^{L} + b^{*}\theta^{R}) \rangle \Big\}
\]
\end{Exp}

\begin{Exp}
The Kirillov-Kostant-Souriau Poisson structure $\pi_{\text{KKS}}$ on $\g^{*}$ is integrated by the symplectic groupoid $T^{*}G \rightrightarrows \g^{*}$ with the canonical symplectic form $\omega_{T^{*}G}$ and the source/target map 
\[
s(a,\xi) = \Ad_{a}^{*}\xi, \quad t(a,\xi) = -\xi,    
\] 
where we use left trivialization $T^{*}G \cong G\times \g^{*}.$
\end{Exp}

\subsection{Hamiltonian groupoid spaces}
\begin{Def}(\cite{xu2004momentum},\cite[\text{Section 5.1}]{bualibanu2026reduction}) \label{def:ham} 
Let $\Gamma \rightrightarrows X$ be a quasi-symplectic groupoid with presymplectic form $\Omega.$ \textit{A Hamiltonian} $\Gamma$-\textit{space} is a non-degenerate Dirac manifold $(M,L_{\omega})$ with a $\Gamma-$action along $\Phi: M\too X$ such that: \begin{enumerate}
    \item The graph $\{(g,m,g\cdot m): s(g) = \Phi(m)\}$ is isotropic with respect to $(\Omega,\omega,-\omega).$ 
    \item $d\omega = -\Phi^{*}\eta.$
    \item $\Ker(\Phi_{*}) \cap \Ker(\omega) = 0.$
\end{enumerate}
Conditions (2) and (3) state precisely that $\Phi$ is a strong Dirac map.    
\end{Def}

\begin{Exp}
For $\G = T^{*}G \rightrightarrows \g^{*},$ a $\G$-Hamiltonian space is the same as a $G$-Hamiltonian space. When $\G = D(G) \rightrightarrows G$, a $\G$-Hamiltonian space is the same as a quasi-Hamiltonian $G$-space.
\end{Exp}

\subsection{Deformation of quasi-symplectic groupoids}
Let $(\X,\K,\lambda_{X},\pi_{\X},)$ be a smooth family of twisted Dirac structures over $\R$ (Definition \ref{Dirac def}), so that $(\X_t,\K_t,\lambda_{\X,t})$ is a twisted Dirac manifold for $t\neq 0$ and a Poisson manifold for $t = 0.$

\begin{Def} \label{def:family}
    \textit{A deformation of quasi-symplectic groupoids} compatible with $(\X,\K,\lambda_{X})$ is a smooth Lie groupoid $\mathbf{\Gamma} \rightrightarrows \X$ over $\R$(i.e., the structure maps commute with $\pi_\X$) together with a smooth fiberwise multiplicative two-form $\mathbf{\Omega}$ on $\mathbf{G}$ such that \begin{enumerate}
        \item For each $t\neq 0,$ the fiber $\mathbf{\Gamma}_t \rightrightarrows \X_t$ is a quasi-symplectic groupoid integrating $(\X_t,\K_t).$
        \item At $t = 0,$ $(\mathbf{\Gamma}_0\rightrightarrows \X_0)$ is a symplectic groupoid integrating the Poisson structure $(\X_0,\K_0).$
        \item The Lie algebroid $\Lie(\Gamma_t)$ maps diffeomorphically to $\K_t$ via $(\rho_t,\varphi_t)$ for each $t,$ and these isomorphisms vary smoothly in $t$
    \end{enumerate}
\end{Def}

The following theorem shows that a smooth family of Dirac manifolds $(\X,\K,\lambda_\X,\pi_{\X})$ integrates to a smooth Lie groupoid $\mathbf{G}$ with respect to the following hypothesis, called \textbf{The uniform integrability.}\\

\noindent\textbf{Hypothesis}(Uniform integrability). \label{Uniform inte} For each $t\in \R,$ $\K_t \too \X_t$ is integrable in the sense of Crainic-Fernandes, with monodromy subgroups $N_{x}(\K_t)\subset \g_{x,t},$ where $\g_{x,t} = \Ker(\rho_{\K})_x  \subset \K_{x,t}$ is the isotropy Lie algebra at $x\in \X_t.$\\
The family $(\N_{x}(\K_t))_{(x,t)\in \X }$ is locally uniformly discrete in the following sense: for any $(x_0,t_0) \in \X,$ there exists an open neighborhood $U\subset \X$ of $(x_0,t_0),$ a continuous family of norms $\|\|_{x,t}$ over $U$ (which exists because $\K$ is a smooth vector bundle over $\X$), and a constant $\epsilon$ such that $N_{x}(\K_t) \cap B_{\epsilon}(0) = \{0\}, \forall (x,t)\in U.$ The intersection with a ball in the larger space $\K_{x,t}$ is well-defined even when the isotropy $\g_{x,t}$ jumps in dimension because $N_{x}(\K_t) \subset \g_{x,t} \subset \K_{x,t}.$

\begin{Lem}[Uniform Lie-algebroid exponential]\label{lem:uniform-exp}
Assume the Uniform integrability hypothesis. Fix $(x_0,t_0)\in \X$ and a vertical trivialization $\Phi\colon U\xrightarrow{\ \sim\ }V_0\times I$ of a
neighborhood of $(x_0,t_0)$, with $V_0\subset \X_{t_0}$ open and
$I\subset\mathbb{R}$ an open interval. For every compact $C\subset V_0$ there exist a constant $\delta > 0$ and an open zero-section neighborhood
\[
  \mathcal{O}_C:=\bigl\{(x,t,\xi)\in \K|_{C\times I}:\|\xi\|_{x,t}<\delta\bigr\}
\]
such that:
\begin{itemize}
\item[(i)] for each $t\in I$ the fiberwise exponential
  $\expt\colon \O_C\cap\Kt\to \mathbf{G}_t$ is defined, as the time-$1$ evaluation of the $\Kt$-path issuing from $\xi$;
\item[(ii)] the assembled map $\exp\colon \O_C \to \mathbf{G}$,
  $(x,t,\xi)\mapsto\expt(\xi)$, is smooth in $(x,t,\xi)$;
\item[(iii)] for each $t\in I$, $\expt$ is a diffeomorphism onto an open neighborhood of $\varepsilon_t(C)$ in $\mathbf{G}_t$, and the radius $\delta$ is uniform in $t\in I$.
\end{itemize}
\end{Lem}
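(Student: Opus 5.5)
The approach is to build the exponential from Lie algebroid paths of the whole vertical algebroid $\K$, viewed as a single Lie algebroid over the total space $\X$ whose anchor is tangent to the fibers of $\pi_\X$. Its $\K$-paths then automatically stay inside a single fiber $\X_t$. Concretely, I would fix a smooth connection $\nabla$ on the vector bundle $\K\to\X$, chosen compatible with the trivialization $\Phi$ so that it restricts to connections $\nabla^t$ on each $\Kt$. For $\xi\in\K_{x,t}$, let $a_\xi(s)$ be the $\K$-path obtained by solving the geodesic-type ODE
\[
\dot\gamma(s)=\rho(a(s)),\qquad \nabla_{\dot\gamma}a=0,\qquad a(0)=\xi .
\]
This is a smooth ODE on the total space of $\K$ with vertical vector field. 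Standard ODE theory (smooth dependence on initial conditions, plus compactness of $C\times\bar I'$ for a slightly shrunk $I$) then gives a $\delta>0$, independent of $t$, such that the solution exists on $[0,1]$ for every $\|\xi\|_{x,t}<\delta$ with $(x,t)\in C\times I$, and depends smoothly on $(x,t,\xi)$.

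The next step is to define $\exp_t(\xi)$ as the $\K_t$-homotopy class of $a_\xi$, viewed as a point of the Weinstein groupoid $\mathbf G_t=P(\K_t)/\sim$, which exists by Crainic--Fernandes integrability in the uniform integrability hypothesis. This gives (i). For (ii), I would realize $\mathbf G=\bigsqcup_t\mathbf G_t$ near the identity through these exponential charts themselves, in the spirit of Crainic--Fernandes' construction of the smooth structure. The key claim is that two $\K$-paths of small norm that are $\K$-homotopic differ by an element of a monodromy group $N_x(\Kt)$. By the uniform discreteness hypothesis, $N_x(\Kt)\cap B_\epsilon(0)=\{0\}$ on $U$. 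So after further shrinking $\delta\le\epsilon/2$ (measured in the same continuous family of norms), the map $\xi\mapsto[a_\xi]$ is injective on $\O_C$. The smooth structure on $\mathbf G$ near $\varepsilon(C)$ is then the one for which the assembled $\exp$ is a diffeomorphism onto its image, and this is compatible with the fiberwise smooth structures on each $\mathbf G_t$. That yields (ii) and the injectivity part of (iii).

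For the rest of (iii), I would compute $d\exp_t$ at the zero section: it is the identity on $\Kt$ plus the inclusion of units, hence invertible. By the inverse function theorem applied to the assembled map on the total space (with $t$ as a parameter), this invertibility persists on a uniform neighborhood over the compact $C\times\bar I'$. Together with injectivity, this gives a diffeomorphism onto an open neighborhood of $\varepsilon_t(C)$, with $\delta$ independent of $t$.

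The main obstacle is the injectivity and uniformity step: ruling out that short $\K$-paths which are homotopic in $\mathbf G_t$ become identified at scales shrinking as $t\to t_0$. This is exactly where the isotropy $\g_{x,t}$ may jump at $t=0$. I would handle it using the Crainic--Fernandes description of the kernel of $P(\Kt)\to\mathbf G_t$ near the identity via the monodromy map $\partial\colon\pi_2(\text{leaf})\to\g_{x,t}$, and then invoke the uniform ball $B_\epsilon(0)$ in $\K_{x,t}$, which makes sense even across dimension jumps.
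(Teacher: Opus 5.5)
Your outline matches the paper's: (i)--(ii) come from smooth dependence of the algebroid geodesic ODE together with compactness of $C\times\bar I'$, and injectivity comes from turning a coincidence $\exp_t(\xi)=\exp_t(\xi')$ into a monodromy element of norm $<2\delta\le\varepsilon$. The gap is that your ``key claim'' carries all of (iii), and it is not something you can quote.

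\begin{itemize}
\item \emph{The key claim needs a localization you have not supplied.} Crainic--Fernandes describe $N_x(\K_t)$ through \emph{constant} isotropy paths: $v\in\mathfrak g_{x,t}$ with $v$ homotopic to $0_x$. Passing from two short geodesics $a_\xi,a_{\xi'}$ with the same class to a small element of $N_x(\K_t)$ means localizing to a single isotropy fibre.
\item \emph{Applying Crainic--Fernandes to each $\K_t$ separately does not give uniformity.} It yields, for each $t$, a zero-section neighbourhood on which $\exp_t$ is injective. Nothing prevents that neighbourhood from shrinking as $t\to t_0$, and a ball bound on $N_x(\K_t)$ alone does not control it.
\item \emph{The paper's own localization cannot be borrowed.} It uses a Yorke period bound, which concerns a single geodesic. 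The base path of $a_\xi\cdot a_{\xi'}^{-1}$ is already a nonconstant loop when $\xi=\xi'$ with $\rho(\xi)\neq 0$, so the paper's conclusion that this loop is constant fails.
\item \emph{Your local-diffeomorphism step is circular.} An inverse function theorem ``on the total space'' needs a smooth structure on $\mathbf G$ fixed independently of the charts $\exp$. You define that structure by those very charts, and their compatibility with the structure of each $\mathbf G_t$ is exactly (iii).
\end{itemize}

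The fix is already latent in your first sentence. Because the anchor of $\K\to\X$ is vertical:
\begin{itemize}
\item its orbits are the leaves of the $\K_t$;
\item its isotropy and monodromy at $(x,t)$ are $\mathfrak g_{x,t}$ and $N_x(\K_t)$;
\item the Uniform integrability hypothesis is exactly the Crainic--Fernandes criterion (local uniform discreteness, measured in the ambient fibres) for the single algebroid $\K\to\X$.
\end{itemize}
Their theorem then makes $\mathcal G(\K)=\bigsqcup_t\mathbf G_t$ a Lie groupoid over $\X$. Each $s^{-1}(\X_t)$ is an embedded submanifold, since $\pi_\X\circ s$ is a submersion, and it is the source-simply-connected integration $\mathbf G_t$ of $\K_t$. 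Moreover, $\exp_\nabla$ is smooth near the zero section with invertible differential along it.

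Invertibility persists on a ball of uniform radius over $C\times\bar I'$. Injectivity on a possibly smaller uniform ball follows by compactness: if $\xi_n\neq\xi'_n$ tend to the zero section with $\exp\xi_n=\exp\xi'_n$, the source map forces a common limit $0_p$, contradicting injectivity of $\exp$ near $0_p$. This uses only continuity of $s$, so possible non-Hausdorffness is harmless. Since $\exp$ is fibre-preserving over $\mathbb R$, restricting to $\K_t$ gives (i)--(iii) with one $t$-uniform $\delta$, and no separate key claim is needed.
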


\begin{proof}
We treat the three assertions in turn; (i) and (ii) are the routine half and (iii) is where the hypothesis is used.
 
\textbf{Existence and smoothness of the flow (i)--(ii).}
Under $\Phi$ the algebroid $K|_U$ pulls back to a smooth family
$\{\Kt|_{V_0}\}_{t\in I}$ of Lagrangian sub-bundles of $TV_0$, and the anchor $\rho_{\K_t}$, the Dorfman bracket $[\cdot,\cdot]_t$, and the twist $\lambda_{\X,t}$ all vary smoothly in $(x,t)$. By definition the value $\expt(\xi)$ is the endpoint at time $1$ of the unique $\Kt$-path $a\colon[0,1]\to\Kt$ with $a(0)$ determined by $\xi$; concretely $a$ solves the
algebroid ODE
\begin{equation}\label{eq:algebroid-ode}
  \frac{d}{ds}\gamma(s)=\rho_{\K_t}\bigl(a(s)\bigr),\qquad
  \frac{D}{ds}a(s)=0,\qquad a(0)=\xi,
\end{equation}
where $\gamma = \pi\circ a$ is the base path and $D/ds$ is the $\Kt$-path derivative determined by $[\cdot,\cdot]_t$. The right-hand side of \eqref{eq:algebroid-ode} is a smooth function of $(x,t,\xi,s)$, since all the structure data are. By the smooth-dependence theorem for solutions of ODEs on
parameters and initial conditions, there is a zero-section neighborhood on which the time-$1$ solution exists and depends smoothly on $(x,t,\xi)$; over the
compact $C\times I$ (with $I$ shrunk to a relatively compact subinterval if necessary) this neighborhood contains a set of the form $\{\|\xi\|_{x,t}<\delta_0\}$ for some $\delta_0>0$. On the quotient $\mathbf{G}_t = P(\Kt)/\!\sim_t$ the endpoint descends to $\expt(\xi)$, and the descent is smooth because the equivalence is generated by the smooth $\Kt$-homotopies of
\cite{crainic2003integrability}. This gives (i) and the smoothness (ii), on $\{\|\xi\|_{x,t}<\delta_0\}$.\\ 

It remains to upgrade local injectivity to genuine injectivity on a
$t$-uniform ball, and this is the only step that uses the hypothesis. We do \emph{not} argue by subtracting fiber elements: because $\dim \g_{x,t}$ jumps across $t=0$, the isotropy does not sit inside $\Kt$ as a smooth sub-bundle near the central fiber, and there is no $t$-uniform splitting
$\Kt \cong \mathfrak{g}_{x,t}\oplus(\text{transverse})$ to subtract along.
Instead we follow the groupoid-multiplication argument of
Crainic--Fernandes, which localizes the coincidence of two exponentials to a single isotropy fiber \emph{without} choosing any such splitting; the uniform-discreteness hypothesis is then applied there.

Fix $t \in I$ and suppose
\[
  \exp_t(\xi) = \exp_t(\xi'), \qquad
  \|\xi\|_{x,t},\ \|\xi'\|_{x',t} < \delta_1 ,
\]
with $\xi \in \K_t|_x$ and $\xi' \in \K_t|_{x'}$.

\smallskip
\noindent\textbf{Step 1 (the base points coincide).}
The source and target of $\exp_t(\xi)$ in $G_t$ are $x$ and the endpoint of the base path of the $\K_t$-geodesic issuing from $\xi$. Writing
$g := \exp_t(\xi)\,\exp_t(\xi')^{-1} \in \mathbf{G}_t$, the hypothesis $\exp_t(\xi)=\exp_t(\xi')$ forces $g = \varepsilon_t(\text{pt})$ to be a unit, so in particular $x=x'$ and the two geodesics share their endpoint. We may
therefore assume $\xi,\xi' \in \K_t|_x$ over the common base point $x$, and $g$ lies in the isotropy $\mathbf{G}_t(x) := s^{-1}(x)\cap t^{-1}(x)$.

\smallskip
\noindent\textbf{Step 2 (the discrepancy lies over the isotropy Lie algebra).}
Consider instead the composite $\K_t$-path
$a := \exp_t(\xi)\cdot\exp_t(\xi')^{-1}$ representing $g$ before passing to $\mathbf{G}_t$; its class $[a]=g$ is a unit, hence $a$ is $\K_t$-homotopic to the trivial $\K_t$-path $0_x$. In particular the base path of $a$ is a loop at $x$.
We claim that, on a small enough $t$-uniform ball, the only $\K_t$-geodesic with closed base path is the constant one. In local coordinates over the compact $C\times I$ the geodesic system reads
\[
  \dot\gamma^i \;=\; \sum_p b^i_p(\gamma)\,a^p, \qquad \dot a^p = 0 ,
\]
so along a geodesic the fiber component $a$ is constant and the base path is an integral curve of the vector field $X_a = \sum_{i,p} b^i_p(\gamma)\,a^p\,
\partial_{x^i}$. By the period-bounding lemma \cite[Lemma]{yorke1969periods}, any
nonconstant periodic orbit of $X_a$ with data in $C\times I$ has period
\[
  T \;\ge\; \frac{2\pi}{M_{C\times I}(a)}, \qquad
  M_{C\times I}(a) \;:=\;
  \sup_{\substack{x\in C,\ t\in I \\ 1\le j,k\le \operatorname{rk}K}}
  \Bigl|\sum_p \tfrac{\partial b^j_p}{\partial x^k}(x,t)\,a^p\Bigr| .
\]
Because the family $(\K_t\to \X_t)_{t\in I}$ and its anchor vary smoothly in $(x,t)$, the coefficients $b^j_p$ and their first $x$-derivatives are bounded uniformly in $t$ on the compact $C\times I$; hence $M_{C\times I}(a)\to 0$ linearly as $\|a\|_{x,t}\to 0$, uniformly in $(x,t)$. Choose $\delta_1'>0$ so small that
\[
  \|a\|_{x,t} < \delta_1' \ \Longrightarrow\ M_{C\times I}(a) < 2\pi
  \qquad\text{for all } (x,t)\in C\times I ,
\]
which is possible by that uniform linear bound. Then every nonconstant periodic base orbit issuing from $\{\|\xi\|_{x,t}<\delta_1'\}$ would have period $T > 1$, so no $\K_t$-geodesic in this ball has a nonconstant closed base path of length $\le 1$; the base loop of $a$ is therefore constant.

Consequently $a$ lies over the single point $x$, its initial datum is tangent to the isotropy, and $a$ is $\K_t$-homotopic to a constant $\mathfrak{g}_{x,t}$-path $n \in \mathfrak{g}_{x,t}=\ker(\rho_{\K_t})_x$. Replace $\delta_1$ by
$\min\{\delta_1,\delta_1'\}$, still called $\delta_1$.

\smallskip
\noindent\textbf{Step 3 (coset description and discreteness).}
Over the single fiber $\mathfrak{g}_{x,t}$ the exponential $\exp_t$ restricts to the ordinary Lie-group exponential of the isotropy, and the fiber of $\exp_t$ over the unit is exactly the monodromy coset: the constant
$\mathfrak{g}_{x,t}$-path $n$ is $\K_t$-homotopic to the trivial path iff
\[
  n \in \Nx \subset Z(\mathfrak{g}_{x,t}) \subset \mathfrak{g}_{x,t}
      \subset \K_t|_x ,
\]
by Definition 3.1 and Lemma 3.3 of \cite{crainic2003integrability}, which identify $\Nx$ with the
subgroup $\exp^{-1}(\widetilde N_x)$ of $Z(\mathfrak{g}_{x,t})$. We stress the distinction between the two ambient spaces at play here. The monodromy group $\Nx$ is intrinsically a subgroup of the \emph{center} of the isotropy Lie algebra, and it is there that the Crainic--Fernandes discreteness obstruction
is formulated; the Uniform integrability hypothesis, by contrast, measures $\|n\|_{x,t}$ and enforces the exclusion in the \emph{ambient} fiber $\K_t|_x$.

These are consistent: since $\Nx\subset Z(\mathfrak{g}_{x,t})\subset\Kt|_x$, the
ambient ball $B_\varepsilon(0)\subset\K_t|_x$ restricts to a ball in $Z(\mathfrak{g}_{x,t})$, and $\Nx\cap B_\varepsilon(0)=\{0\}$ in $\K_t|_x$ is therefore \emph{a priori} at least as strong as discreteness of $\Nx$ inside
$Z(\mathfrak{g}_{x,t})$. The hypothesis thus does not redefine which group $\Nx$ is; it only requires the ambient norm to detect its discreteness uniformly, which is exactly what a fixed sub-bundle norm on $\K_t$ cannot be assumed to do intrinsically once $\dim\mathfrak{g}_{x,t}$ varies.

This is where genuine injectivity is bought. By construction $n$ is
represented in the ambient fiber $\Kt|_x$ with
\[
  \|n\|_{x,t} \;\le\; \|\xi\|_{x,t} + \|\xi'\|_{x,t} \;<\; 2\delta_1 .
\]
Now set
\[
  \delta \;:=\; \tfrac12\,\min\{\delta_0,\ \delta_1,\ \varepsilon\},
\]
with $\varepsilon$ the uniform-discreteness constant of the hypothesis on
$U \supset C\times I$ (shrinking $I$ so that $C\times I\subset U$). If both
$\|\xi\|_{x,t},\|\xi'\|_{x,t} < \delta$ then $\|n\|_{x,t} < 2\delta \le
\varepsilon$, so
\[
  n \in \Nx \cap B_\varepsilon(0) = \{0\}
\]
by uniform discreteness. Hence $n=0$, the constant path is trivial, and therefore $\xi=\xi'$.

\smallskip
This proves $\exp_t$ is injective on $\{\|\xi\|_{x,t}<\delta\}$ for every $t\in I$. A local diffeomorphism that is injective is a diffeomorphism onto its open image, which contains $\varepsilon_t(C)$; this is (iii), with $\delta$ manifestly independent of $t$.
\end{proof}

\begin{Rem}
The crucial point is that the ball $B_\varepsilon(0)$ in the hypothesis is taken in the ambient fiber $\K_t|_x$, not in $\mathfrak{g}_{x,t}$. Step 3 produces $n$ as an element of $\mathfrak{g}_{x,t}$, but its norm is measured
and its exclusion enforced in $\K_t|_x$; this is what keeps the exclusion $\Nx \cap B_\varepsilon(0) = \{0\}$ meaningful and $t$-uniform across values
of $t$ where $\dim\mathfrak{g}_{x,t}$ jumps, and in particular across the central fiber $t=0$, where $\K_0$ is Poisson and $\mathfrak{g}_{x,0}$ may drop
in dimension. Note that no fiberwise splitting off $\mathfrak{g}_{x,t}$ was
used at any stage: the localization to the isotropy in Step 2 is effected by the period bound, not by a projection, which is exactly what allows the argument to survive the dimension jump.
\end{Rem}

\begin{Thm} \label{groupoid integra}
Let $(\X,\K,\pi_{\X})$ be a smooth family of twisted Dirac manifolds satisfying Hypothesis \ref{Uniform inte}. Then, there exists a smooth Lie groupoid $\mathbf{G} \rightrightarrows \X$ over $\R$ and a smooth section $\Omega$ of $\bigwedge^{2} (\Ker(d\pi_{\mathbf{G}})^{*})$ such that \begin{enumerate}
    \item $(\mathbf{G}_t,\Omega_t)$ is a quasi-symplectic groupoid integrating $(\K_t,\lambda_{\X,t})$
    \item $\Lie(\mathbf{G}_t) \cong \K_t$ varies smoothly in $t.$
    \item For $t = 0,$ $(\mathbf{G}_0,\omega_0)$ is the symplectic groupoid integrating the Poisson structure $\K_0.$
\end{enumerate}
$(\mathbf{G},\Omega)$ is unique up to an isomorphism to a smooth family of source-simply-connected quasi-symplectic groupoid.
\end{Thm}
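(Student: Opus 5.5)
We build $\mathbf{G}$ as the Weinstein groupoid of the total algebroid and then read off the fibers. The smooth family $\K\to\X$ is itself a Lie algebroid over the total space $\X$: its anchor takes values in $\Ker(d\pi_\X)\subset T\X$, and its bracket is the fiberwise twisted Dorfman bracket $[\cdot,\cdot]_{\lambda_{\X}}$, which is smooth because $\K$ and $\lambda_\X$ are. Since the anchor is vertical, every $\K$-path and every $\K$-homotopy stays inside a single fiber $\X_t$. Hence the Crainic--Fernandes groupoid
\[
\mathbf{G}:=P(\K)/\!\sim
\]
is a topological groupoid over $\X$, it commutes with $\pi_\X$, and its fibers are $\mathbf{G}_t=P(\K_t)/\!\sim_t$, the source-simply-connected integration of $\K_t$. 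By Hypothesis \ref{Uniform inte} each $\mathbf{G}_t$ is a Lie groupoid.

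The smooth structure on the total space is the step I expect to be the main obstacle. It is not automatic, because the isotropy dimensions jump across $t=0$; note that the monodromy groups of $\K$ over $\X$ coincide with the fiberwise $N_x(\K_t)$. I will produce charts with Lemma \ref{lem:uniform-exp}. For $(x_0,t_0)$ and compact $C$, the map $\exp\colon\O_C\to\mathbf{G}$ is smooth in $(x,t,\xi)$ and injective with a $t$-uniform radius $\delta$. So $\exp(\O_C)$ is an open neighbourhood of the identity section over $C\times I$, and I declare it a chart. General charts come from right-translating these by arrows of $\mathbf{G}$, exactly as in \cite{crainic2003integrability}. Compatibility of overlapping charts follows because transition maps are compositions of the smooth maps $\exp$, groupoid multiplication of $\K$-paths, and smooth parameter-dependent solutions of \eqref{eq:algebroid-ode}. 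The uniformity of $\delta$ in $t$ is what guarantees that the charts cover a neighbourhood of $t=0$, and not only the fibers $t\neq0$. With these charts, the source and target are submersions, the multiplication is smooth, and $\pi_{\mathbf{G}}$ is a submersion. Moreover, $\Lie(\mathbf{G})=\K$ as smooth bundles, which gives (2).

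Next comes the $2$-form. For each $t$, \cite{bursztyn2004integration} gives a unique multiplicative $\Omega_t$ on $\mathbf{G}_t$ with $d\Omega_t=s^*\lambda_{\X,t}-t^*\lambda_{\X,t}$ that induces $\K_t$ through $(\rho,\varphi)$. To show that these assemble smoothly, I use the explicit path-space formula: $\Omega_t$ is the descent of the $2$-form on $P(\K_t)$ obtained by integrating along $s\in[0,1]$ the pairing of the variations with the tautological form. This formula depends smoothly on the structure data, and hence smoothly on $t$ in the charts above. Alternatively, uniqueness of the multiplicative form with prescribed infinitesimal data lets one glue local smooth solutions. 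The nondegeneracy condition $\Ker\Omega_t\cap\Ker ds\cap\Ker dt=0$ and the dimension count $\dim\mathbf{G}_t=2\dim\X_t$ hold fiberwise, which gives (1). At $t=0$ we have $\lambda_{\X,0}=0$ and $\K_0=L_{\pi_0}$, so $\Omega_0$ is closed. Since $\Ker(\K_0)=0$, $\Omega_0$ is nondegenerate, which gives (3).

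Uniqueness follows from Lie's second theorem applied to the total algebroid $\K$. Any other smooth source-simply-connected family with $\Lie\cong\K$ receives a unique smooth groupoid morphism from $\mathbf{G}$ that integrates the identity. This morphism is a fiberwise isomorphism, hence a diffeomorphism. It also matches the $2$-forms, by fiberwise uniqueness of the multiplicative form.
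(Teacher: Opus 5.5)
Your proposal is correct at the level of rigor of the paper. For the smooth structure it uses the same skeleton: the $t$-uniform exponential charts of Lemma~\ref{lem:uniform-exp}, propagated by right translation. It departs from the paper in framing and in two steps, and these differences are worth recording.

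First, you realize $\mathbf{G}$ as $P(\K)/\!\sim$ for the total algebroid $\K\to\X$, whose anchor is vertical. The paper instead assembles $\bigsqcup_t P(\K_t)/\!\sim_t$ by hand. Your framing gives the groupoid structure over $\X$, the compatibility with $\pi_\X$, and the identification of the fibers at no cost.

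It buys more than you use. As you note, the monodromy groups of $\K$ at $(x,t)$ are the $N_x(\K_t)$, so the Uniform integrability hypothesis is literally the Crainic--Fernandes condition $\liminf_{y\to x} r(y)>0$ for $\K\to\X$. Applying \cite[Theorem 4.1]{crainic2003integrability} to the total algebroid therefore already makes $\mathbf{G}$ a source-simply-connected Lie groupoid. Each $\mathbf{G}_t$, the preimage of $\X_t$ under the source map, is then an embedded subgroupoid integrating $\K_t$. The jump of isotropy at $t=0$ plays no role, since that theorem covers non-regular algebroids. This lets you drop the chart construction altogether. Its soft spots are glossed over in both your version and the paper's: openness of $\exp(\O_C)$ in the total space, and joint smoothness in $t$ of transition maps involving $\exp^{-1}$.

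Second, the two arguments for smoothness of $\Omega$ differ. The paper uses the explicit formula at units \cite[Eq.~(3.14)]{bursztyn2004integration} together with right translation by smoothly varying bisections. You descend the path-space form instead. Your route gets joint smoothness from the smooth dependence of that form on $P(\K)$, provided $P(\K)\to\mathbf{G}$ is known to be a smooth submersion. That is again exactly what the total-algebroid integration supplies. Your alternative of gluing local solutions by uniqueness is too vague to carry weight on its own.

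Finally, your Lie~II argument on the total algebroid actually proves the uniqueness clause of the statement. The paper asserts that clause without proof.
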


We prove the theorem through a sequence of steps and lemmas.\\

\noindent\textbf{Part 1: Smooth assembly of the groupoid family}
\begin{Prop} \label{integrability prop}
Under Hypothesis \ref{Uniform inte}, the Weinstein groupoid $\mathbf{G}_t = P(\K_t)/\sim_t$ assemble into a smooth Lie groupoid $\mathbf{G} \rightrightarrows \X$ over $\R,$ with $\Lie(\mathbf{G}_t) \cong \K_t,$ for every $t.$    
\end{Prop}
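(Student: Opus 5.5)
The plan is to build $\mathbf{G}$ as a single Weinstein groupoid of a single Lie algebroid over the total space $\X$, and then read off the fibers. Let $\K^{\mathrm{v}}\to\X$ be the smooth vector bundle $\K$, with anchor $\rho_\K$ taking values in $\Ker(d\pi_\X)\subset T\X$ and with the fiberwise twisted Dorfman bracket $[\cdot,\cdot]_{\lambda_{\X,t}}$. The bracket of sections of $\K^{\mathrm{v}}$ is defined $t$ by $t$. It is smooth in $t$ because $\K$ and $\lambda_\X$ are smooth and because the Dorfman bracket involves only vertical derivatives. Hence $\K^{\mathrm{v}}$ is a Lie algebroid over $\X$ whose anchor is tangent to the fibers of $\pi_\X$. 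Its orbits lie in the $\X_t$, and its restriction to $\X_t$ is $\K_t$.

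Consequently $\K^{\mathrm{v}}$-paths are exactly $\K_t$-paths for some $t$, and $\K^{\mathrm{v}}$-homotopies are $\K_t$-homotopies. This identifies the Weinstein groupoid
\[
P(\K^{\mathrm{v}})/\!\sim \;=\; \bigsqcup_t P(\K_t)/\!\sim_t \;=\; \bigsqcup_t \mathbf{G}_t
\]
as a topological groupoid over $\X$. The map $\pi_\X\circ s=\pi_\X\circ t$ commutes with all structure maps, so the groupoid lives over $\R$.

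Smoothness then follows from the Crainic--Fernandes criterion applied to $\K^{\mathrm{v}}$. It is integrable exactly when the monodromy groups $N_{(x,t)}(\K^{\mathrm{v}})$ are uniformly discrete near every point. These groups are computed from the isotropy and from spheres of paths in the orbit through $(x,t)$. The orbit lies inside $\X_t$, so $N_{(x,t)}(\K^{\mathrm{v}})=N_x(\K_t)$. The uniform integrability hypothesis is then precisely the Crainic--Fernandes local uniform discreteness condition for $\K^{\mathrm{v}}$, in the norm on the ambient fiber $\K_{x,t}$ that Crainic--Fernandes use. This gives the source-simply-connected Lie groupoid $\mathbf{G}\rightrightarrows\X$ with $\Lie(\mathbf{G})\cong\K^{\mathrm{v}}$. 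As a cross-check, and to make the charts explicit, I would realize the smooth structure near the units by Lemma \ref{lem:uniform-exp}. Parts (ii) and (iii) of that lemma give charts $\exp\colon\O_C\to\mathbf{G}$ that are smooth in $(x,t,\xi)$ and injective with $t$-uniform radius. Translating these charts by right multiplication along $s$-fibers gives the atlas, exactly as in Crainic--Fernandes.

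Next I would check that the fibers are the right groupoids. The map $\pi_\X\circ s$ is a submersion, because $s$ is a submersion and $\pi_\X$ is one. So $\mathbf{G}_t=(\pi_\X\circ s)^{-1}(t)$ is an embedded subgroupoid over $\X_t$, and its Lie algebroid is $\K^{\mathrm{v}}|_{\X_t}=\K_t$. Its $s$-fibers are the $s$-fibers of $\mathbf{G}$, which are simply connected. Hence $\mathbf{G}_t$ is the source-simply-connected integration of $\K_t$. The isomorphism $\Lie(\mathbf{G}_t)\cong\K_t$ is the restriction of the global isomorphism $\Lie(\mathbf{G})\cong\K^{\mathrm{v}}$, and so it varies smoothly in $t$.

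The main obstacle is justifying that the orbitwise monodromy of $\K^{\mathrm{v}}$ matches the fiberwise one, and that the hypothesis uses the correct norm near $t=0$, where $\dim\g_{x,t}$ jumps. I would handle this directly with Lemma \ref{lem:uniform-exp}: its injectivity argument (Steps 1--3) already controls coincidences of exponentials across the jump. If one does not want to cite the full Crainic--Fernandes theorem for the total algebroid, the lemma supplies the needed local charts.
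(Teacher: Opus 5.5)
Your argument is correct, and it takes a genuinely different route from the paper's. The paper never forms an algebroid over the total space. It takes the fiberwise Weinstein groupoids from \cite[Theorem 4.1]{crainic2003integrability} and builds the smooth structure on $\bigsqcup_t\mathbf{G}_t$ by hand: first the $t$-uniform exponential chart of Lemma \ref{lem:uniform-exp} near the units, then right translation by local bisections extended off the fiber over $t_0$. It then checks the structure maps and $\Lie(\mathbf{G}_t)\cong\K_t$ chart by chart.

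You instead note that $\K$ is a single Lie algebroid $\K^{\mathrm{v}}\to\X$, with vertical anchor and the foliated twisted Dorfman bracket (leafwise differential plus $\iota_{X\wedge Y}\lambda_\X$), for which every $\X_t$ is saturated. Along the base maps of $\K^{\mathrm{v}}$-paths and $\K^{\mathrm{v}}$-homotopies, $\pi_\X$ is constant, so isotropy and monodromy are computed fiberwise. Hence Hypothesis \ref{Uniform inte} is exactly the Crainic--Fernandes condition for $\K^{\mathrm{v}}$. Their function $r(y)=d(0,N_y\setminus\{0\})$ is measured in a norm on the algebroid itself, and local continuous norms are equivalent on compacts. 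One application of their theorem then gives $\mathbf{G}$, and $\mathbf{G}_t=s^{-1}(\X_t)$ is the source-simply-connected integration of $\K_t$.

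What you gain is that smoothness across $t=0$ is automatic, for the total space, the structure maps and the algebroid identification alike. Lemma \ref{lem:uniform-exp} becomes essentially a consequence (the exponential of $\K^{\mathrm{v}}$ on a compact tube) rather than an input. You also bypass the steps the paper only sketches: extending a bisection through an arbitrary arrow $g_0$ to nearby $t$, which already presupposes a smooth structure near $g_0$ across $t$, and verifying chart compatibility on overlaps. What the paper's route offers is explicit charts, which it reuses in Proposition \ref{prop:Omega-smooth} and Theorem \ref{thm:ham-deform}; those can equally be extracted from your $\mathbf{G}$.

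Two small corrections. First, the identification $P(\K^{\mathrm{v}})/\!\sim\,=\bigsqcup_t\mathbf{G}_t$ is only set-theoretic: the topology is the quotient topology from $P(\K^{\mathrm{v}})$, not the disjoint-union one. Second, the fallback to Lemma \ref{lem:uniform-exp} in your last paragraph is unnecessary. The ``obstacle'' you name is already handled by the verticality of the anchor and by the choice of norm in the Crainic--Fernandes criterion. As in the paper, the resulting $\mathbf{G}$ may be non-Hausdorff, which the statement does not exclude.
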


\begin{proof}[Proof of Proposition \ref{integrability prop}]
By \cite[Theorem 4.1]{crainic2003integrability}, for each $t$ the Weinstein groupoid $\mathcal{G}_t = P(\K_t)/\!\sim_t$ is a finite-dimensional source-simply-connected Lie groupoid, since $\K_t$ is integrable by Hypothesis \ref{Uniform inte}.
Write $\epsilon_t : \X_t \to \mathcal{G}_t$ for the unit section and set $\mathbf{G} = \bigsqcup_{t \in \mathbb{R}} \mathcal{G}_t$. We construct a smooth structure on $\mathbf{G}$ in two steps: first a chart around the unit bisection
via the fiberwise Lie algebroid exponential of Lemma \ref{lem:uniform-exp}; then a global atlas via right multiplication by local bisections.

\medskip
\noindent\textbf{Step 1: A local chart near the unit bisection.}
Fix $(x_0, t_0) \in \X$ and a vertical trivialization
$\Phi : U \xrightarrow{\ \sim\ } V_0 \times I$ of a neighborhood of $(x_0, t_0)$, with $V_0 \subset \X_{t_0}$ open and $I \subset \mathbb{R}$ an open interval.
Because $\K \subset T\X$ is a smooth Lagrangian subbundle of the
fiberwise generalized tangent bundle (part of the Dirac deformation data of Definition \ref{Dirac def}), under $\Phi$ it pulls back to a smooth
family $\{\K_t|_{V_0}\}_{t \in I}$ of Lagrangian subbundles of $\mathbb{T}V_0$; concretely, choosing a smooth local frame of $\K$ over $U$ and applying $d\Phi$ yields, for each $t$, a frame of $\K_t|_{V_0}$ depending smoothly on $(x,t)$, and the anchor $\rho_{\K_t}$, the Dorfman bracket $[\cdot,\cdot]_t$, and
the twist $\lambda_{\X,t}$ all vary smoothly in $(x,t)$. This is exactly the input required by Lemma \ref{lem:uniform-exp}.

Applying Lemma \ref{lem:uniform-exp} to the family $\K \to \X$
over $U$: for every compact $C \subset V_0$ there exist $\delta > 0$ and an open zero-section neighborhood
\[
  O_C := \bigl\{(x,t,\xi) \in \K|_{C \times I} : \|\xi\|_{x,t} < \delta \bigr\}
\]
such that the assembled map $\exp : O_C \to \mathbf{G}$, $(x,t,\xi) \mapsto \exp_t(\xi)$,
is smooth in $(x,t,\xi)$ (Lemma \ref{lem:uniform-exp}(i)--(ii)), and for each $t \in I$ the fiberwise map $\exp_t : O_C \cap \K_t \to \mathbf{G}_t$ is a diffeomorphism onto an open neighborhood of $\epsilon_t(C)$, with radius $\delta$ uniform in $t$ (Lemma \ref{lem:uniform-exp}(iii)). This produces a smooth
chart on a neighborhood of the unit bisection, valid uniformly across $t = 0$, where $\dim \mathfrak{g}_{x,t}$ may jump.

\medskip
\noindent\textbf{Step 2: Propagation by right-translation via local bisections.}
Let $g_0 \in \mathbf{G}_{t_0}$ with $s(g_0) = x_0^s$ and $t(g_0) = x_0^t$. Since $\mathbf{G}_{t_0}$ is source-simply-connected and $s_{t_0}$ is a surjective submersion, there is a local bisection of $\mathbf{G}_{t_0}$ through $g_0$: an open neighborhood $W_0 \subset \X_{t_0}$ of $x_0^s$ and a smooth section
$\sigma_0 : W_0 \to \mathbf{G}_{t_0}$ of $s_{t_0}$ with $\sigma_0(x_0^s) = g_0$, such that $t_{t_0} \circ \sigma_0 : W_0 \to \X_{t_0}$ is an open embedding. By the implicit function theorem and the uniform smoothness of the groupoid operations
near the unit bisection (Step 1), $\sigma_0$ extends to a smooth local bisection
\[
  \sigma : W \longrightarrow \mathbf{G}
\]
over an open neighborhood $W \subset \X$ of $(x_0^s, t_0)$, whose
restriction to each fiber $W \cap \X_t$ is a local bisection of $\mathbf{G}_t$ for $t$ sufficiently close to $t_0$.

Right-translation by $\sigma$ defines a fiberwise diffeomorphism
\begin{equation}
  R_\sigma : t^{-1}(W) \longrightarrow t^{-1}\bigl(t(\sigma(W))\bigr),
  \qquad h \longmapsto \sigma(t(h)) \cdot h.
\end{equation}
For the product $\sigma(t(h)) \cdot h$ to be defined we need
$s(\sigma(t(h))) = t(h)$, which holds because $\sigma$ is a section of $s$: setting $x = t(h)$ in $s(\sigma(x)) = x$ gives the requirement. The target of the product
is $t(\sigma(t(h)))$, which lies in $t(\sigma(W))$ as claimed. The inverse of $R_\sigma$ is left-translation by the pointwise inverse $\sigma^{-1}$, so $R_\sigma$
is a diffeomorphism.

The map $R_\sigma$ sends a neighborhood of the unit bisection $\epsilon(W)$ (where
$h = \epsilon(t(h))$, so $\sigma(t(h)) \cdot \epsilon(t(h)) = \sigma(t(h))$) to a
neighborhood of $\sigma(W) \subset \mathcal{G}$, and in particular
$R_\sigma(\epsilon(x_0^t)) = \sigma(x_0^t) = g_0$. Pulling back the Step 1 chart by $R_\sigma^{-1}$ gives a smooth chart on a neighborhood of $g_0$.

Since local bisections through any two points of $\mathbf{G}_t$ can be concatenated (by source-simple-connectivity), this procedure covers $\mathbf{G}$ by smooth charts. Compatibility on overlaps follows from smoothness of multiplication on the
unit-bisection chart combined with uniqueness of the source-simply-connected integration.

\medskip
\noindent\textbf{Smoothness of the structure maps.}
The unit section $\epsilon : \X \to \mathbf{G}$ is smooth by construction.
In the unit-bisection chart, $s$ and $t$ are smooth because $t \circ \exp$ is the base exponential composed with the anchor, and $s \circ \exp$ is its source analogue, both smooth by Step 1; in a right-translated chart they are compositions
with the diffeomorphism $R_\sigma^{\pm 1}$. Multiplication and inversion are smooth in the unit-bisection chart by smoothness of the fiberwise exponential and propagate to the global atlas via the same right-translation argument of Step 2.

\medskip
\noindent\textbf{Smooth Lie algebroid isomorphism.}
The identification $\mathrm{Lie}(\mathbf{G}_t) \cong \K_t$ from
\cite[Theorem 4.1]{crainic2003integrability} is realized by the anchor-exponential correspondence of Step 1. Since the exponential family is smooth in $t$
(Lemma \ref{lem:uniform-exp}(ii)), so is the identification.
\end{proof}

\noindent\textbf{Part 2: The smooth IM-two form.} Recall from \cite{bursztyn2012multiplicative,bursztyn2004integration} that \textit{an infinitesimally multiplicative} (IM) 2-form on a Lie algebroid $A\too M$ with a $\eta$-twist (a closed 3-form on $M$) is a bundle map $\mu: A\too T^{*}M$ such that for any $a,b \in \Gamma(A)$ \begin{enumerate}
    \item $\langle \mu(a),\rho(b) \rangle = -\langle \mu(b),\rho(a) \rangle.$ \label{condition 1}
    \item $\mu([a,b]_A) = \L_{\rho(a)}\mu(b) - i_{\rho(b)}d\mu(a) + i_{\rho(a)\wedge \rho(b)}\eta.$ \label{condition 2}
\end{enumerate}

The sign convention in \ref{condition 2} is that of \cite[\text{Eq. (2.6)}]{bursztyn2004integration}. For a $\eta$-twisted Dirac structure $L \subset \mathbb{T}M$, viewed as a Lie algebroid with respect to the restriction of the Dorfman bracket, the map defined by $\mu_L: L \too T^{*}M, \quad (v,\alpha) \too \alpha$ is an IM 2-form with twist $\eta$.

\begin{Lem} \label{IM 2 form}
    The map \[
    \mathbf{\mu}: \K \too (T^{\text{vert}}X)^{*}, \quad (v,\alpha) \too \alpha,
    \]
    
obtained by restricting the projection $\mathbb{T}^{\text{vert}}\X \too (T^{\text{vert}}\X)^{*}$ to $\K$, is a smooth bundle map over $\X.$ For each $t$, its fiberwise restriction $\mu_t: \K_t \too T^{*}\X_t$ is an IM 2-form for the Lie algebroid $\K_t \too \X_t$ with twist $\lambda_{\X,t},$ integrated by the quasi-symplectic groupoid $\mathbf{G}_t \rightrightarrows \X_t.$
\end{Lem}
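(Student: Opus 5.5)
The plan is to prove the three assertions of Lemma \ref{IM 2 form} separately: smoothness of $\mathbf{\mu}$, the IM identities on each fiber, and the integration statement.

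\emph{Smoothness.} The projection $\mathbb{T}^{\mathrm{vert}}\X = T^{\mathrm{vert}}\X\oplus (T^{\mathrm{vert}}\X)^{*} \too (T^{\mathrm{vert}}\X)^{*}$ is a smooth vector bundle map. By Definition \ref{Dirac def}, $\K$ is a smooth subbundle of $\mathbb{T}^{\mathrm{vert}}\X$. Concretely, over a vertical trivialization $U\cong V_0\times I$ as in Lemma \ref{lem:uniform-exp}, we pick a smooth local frame $\{(v_p,\alpha_p)\}$ of $\K$. Then $\mathbf{\mu}$ sends it to the smooth coframe-valued sections $\alpha_p$, so $\mathbf{\mu}$ is a smooth bundle map. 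Its restriction to $\X_t$ is the fiberwise projection $\mu_t=\mu_{\K_t}$.

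\emph{IM identities.} Fix $t$. For $a=(v,\alpha)$ and $b=(w,\beta)$ in $\Gamma(\K_t)$, condition (1) reads $\alpha(w)=-\beta(v)$, which is exactly isotropy of $\K_t$. Condition (2) follows by comparing cotangent components of the $\lambda_{\X,t}$-twisted Dorfman bracket $[a,b]_{\lambda}=([v,w],\L_v\beta-i_w d\alpha+i_{v\wedge w}\lambda_{\X,t})$. Closure of $\Gamma(\K_t)$ under this bracket makes $\mu_t([a,b])$ equal to that component, which is the required formula with twist $\lambda_{\X,t}$, up to the sign convention of \cite{bursztyn2004integration}. I will check this convention carefully, since the paper's bracket uses $+i_{v\wedge w}\lambda$. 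At $t=0$ we have $\lambda_{\X,0}=0$ and $\K_0=L_{\pi_0}$, so $\mu_0(\pi_0^\sharp\beta,\beta)=\beta$ is the standard untwisted IM form of a Poisson manifold.

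\emph{Integration.} By Proposition \ref{integrability prop}, $\mathbf{G}_t$ is the source-simply-connected integration of $\K_t$. The integration theorem for twisted IM $2$-forms (\cite{bursztyn2004integration}, Theorem 2.5; see also \cite{bursztyn2012multiplicative}) produces a unique multiplicative $2$-form $\Omega_t$ on $\mathbf{G}_t$ with $d\Omega_t=s^{*}\lambda_{\X,t}-t^{*}\lambda_{\X,t}$ whose infinitesimal part is $\mu_t$. Because $\K_t$ is Lagrangian, the nondegeneracy conditions $\dim\mathbf{G}_t=2\dim\X_t$ and $\Ker\Omega_t\cap\Ker ds\cap\Ker dt=0$ hold, so $(\mathbf{G}_t,\Omega_t)$ is quasi-symplectic. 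At $t=0$ it is symplectic, because $\Ker\K_0=0$.

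The lemma as stated only requires this fiberwise integration. The main obstacle, which matters for the subsequent smoothness of $\Omega$ in $t$, is that the integration is constructed fiberwise. To address it, I would use the explicit formula $\Omega_t$ from the path-space description, namely the $A$-path integral of $\mu_t$ pulled back under the smooth exponential charts of Lemma \ref{lem:uniform-exp}, and note that it depends smoothly on $(x,t,\xi)$. I would record this as a remark rather than as part of the present lemma.
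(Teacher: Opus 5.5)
Your proposal is correct and follows the paper's route. Smoothness comes from restricting the smooth projection $\mathbb{T}^{\mathrm{vert}}\X\to(T^{\mathrm{vert}}\X)^*$ to the smooth subbundle $\K$, and the fiberwise IM identities are exactly isotropy of $\K_t$ plus closure of $\Gamma(\K_t)$ under the $\lambda_{\X,t}$-twisted bracket; the paper simply cites \cite{bursztyn2004integration} for the latter. Your worry about the sign of the twist term does not arise, since the paper's condition (2) carries the same $+i_{\rho(a)\wedge\rho(b)}\eta$ term as its bracket.

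Your explicit treatment of the ``integrated by $\mathbf{G}_t$'' clause goes through Proposition \ref{integrability prop} and the BCWZ correspondence: injectivity of $(\rho,\mu_t)$ together with $\operatorname{rk}\K_t=\dim\X_t$ gives quasi-symplectic nondegeneracy, and $\Ker\K_0=0$ gives symplecticity at $t=0$. The paper does not prove this clause in the lemma itself but defers it to Proposition \ref{prop:Omega-smooth} and the proof of Theorem \ref{groupoid integra}.
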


\begin{proof}
Smoothness of $\mathbf{\mu}$ is immediate: $\K$ is a smooth sub-bundle of $\mathbb{T}\X$ by the definition of a Dirac deformation, and the projection is smooth. The fiberwise identification is \cite{bursztyn2004integration}: $\mu_t$ satisfies \ref{condition 1} and \ref{condition 2} with $\rho = \rho_{\K_t},$ $[,]_{A}$ is the Dorfman bracket on $\K_t$ and $\eta = \lambda_{\X,t}.$
\end{proof}

\noindent\textbf{Part 3: smooth integration of $\mathbf{\mu}$ to $\Omega$.}
The key step is the smooth dependence of the integrated multiplicative two form on the IM-data. 

\begin{Prop}\label{prop:Omega-smooth}
There exists a unique smooth fiberwise multiplicative 2-form
$\boldsymbol{\Omega} \in \Omega^2_{\mathrm{vert}}(\mathbf{G})$ such that, for
each $t$, $\Omega_t := \boldsymbol{\Omega}|_{\mathbf{G}_t}$ is the
multiplicative 2-form on $\mathbf{G}_t$ associated by
\cite{bursztyn2004integration} to $\mu_t$. It satisfies the fiberwise identity
\begin{equation}\label{eq:twist-fiberwise}
d\Omega_t = \mathbf{s}_t^* \lambda_{\mathcal{X}, t}
- \mathbf{t}_t^* \lambda_{\mathcal{X}, t}
\quad \text{for every } t \in \R,
\end{equation}
which assembles into the identity
$d\boldsymbol{\Omega}
= \mathbf{s}^* \lambda_{\mathcal{X}} - \mathbf{t}^* \lambda_{\mathcal{X}}$
on $\mathbf{G}$, and the non-degeneracy condition
\begin{equation} \label{non-degen}
\ker \Omega_t \cap \ker d\mathbf{s}_t \cap \ker d\mathbf{t}_t = 0
\quad \text{for every } t \in \R.
\end{equation}
\end{Prop}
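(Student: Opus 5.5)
The plan is to build $\boldsymbol{\Omega}$ by the explicit path-space construction of \cite{bursztyn2004integration} (Cattaneo--Felder style), applied uniformly in $t$, rather than by invoking the fiberwise existence theorem as a black box. Since $\mathbf{G}_t = P(\K_t)/\!\sim_t$ is source-simply-connected (Proposition \ref{integrability prop}), each IM form $\mu_t$ of Lemma \ref{IM 2 form} integrates to a unique multiplicative $\Omega_t$. For a $\K_t$-path $a\colon[0,1]\to \K_t$ with base path $\gamma$, I write the candidate on $P(\K_t)$ as
\[
\widetilde{\Omega}_t(\delta a_1,\delta a_2)=\int_0^1 \Big(\langle \mu_t(\delta a_1),\delta\gamma_2\rangle-\langle \mu_t(\delta a_2),\delta\gamma_1\rangle+\lambda_{\X,t}(\dot\gamma,\delta\gamma_1,\delta\gamma_2)\Big)\,ds .
\]
Here the variations are taken with respect to a connection. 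The first step is to check that $\widetilde{\Omega}_t$ is basic for the $\K_t$-homotopy foliation, so that it descends to $\Omega_t$. This check is the content of \cite{bursztyn2004integration} on each fiber. The integrand depends smoothly on $t$, because $\mu$, $\K$ and $\lambda_{\X}$ are smooth (Lemma \ref{IM 2 form}, Definition \ref{Dirac def}).

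The second step is smoothness across $t$. I will use the charts of Proposition \ref{integrability prop}. Near the unit bisection, $\exp_t(\xi)$ is the class of the constant-fiber geodesic path. Hence $\exp^*\boldsymbol{\Omega}$ on $\mathcal{O}_C$ is obtained by integrating the smooth integrand above along the smooth family of geodesics from Lemma \ref{lem:uniform-exp}(ii), so it is smooth in $(x,t,\xi)$. Away from the units, I use multiplicativity: $m^*\Omega_t=\pr_1^*\Omega_t+\pr_2^*\Omega_t$. This gives
\[
R_\sigma^*\boldsymbol{\Omega}=\boldsymbol{\Omega}+(\text{pullback of }\boldsymbol{\Omega}\text{ along }\sigma\circ t),
\]
and both terms are smooth because $\sigma$ is a smooth local bisection and the first is already smooth near units. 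So $\boldsymbol{\Omega}$ is smooth on each chart of the atlas of Proposition \ref{integrability prop}. Uniqueness follows fiberwise from the uniqueness of the integration of IM forms on source-simply-connected groupoids.

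The third step proves \eqref{eq:twist-fiberwise} and \eqref{non-degen}. For each $t$, the identity $d\Omega_t=s_t^*\lambda_{\X,t}-t_t^*\lambda_{\X,t}$ is part of the integration theorem of \cite{bursztyn2004integration}, using condition \ref{condition 2} with twist $\lambda_{\X,t}$. It assembles into the total vertical identity because both sides are smooth vertical forms that agree on every fiber. For non-degeneracy, I use the criterion of \cite{bursztyn2004integration}: $\ker\Omega_t\cap\ker ds_t\cap\ker dt_t=0$ holds if and only if $\ker\mu_t\cap\ker\rho_{\K_t}=0$ and $\dim\mathbf{G}_t=2\dim\X_t$. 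For $\mu_t(v,\alpha)=\alpha$ and $\rho(v,\alpha)=v$, an element of both kernels is $(0,0)$. The dimension count holds because $\K_t$ is Lagrangian of rank $\dim\X_t$.

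I expect the main obstacle to be the second step at $t=0$. There the isotropy dimension jumps, and one has to be sure that the descent of $\widetilde{\Omega}$ through $\sim_t$ is compatible with the smooth structure of Proposition \ref{integrability prop}. My first attempt is to avoid the quotient altogether: compute $\boldsymbol{\Omega}$ only in the exponential chart, where Lemma \ref{lem:uniform-exp}(iii) gives injectivity with a $t$-uniform radius, and then propagate by multiplicativity as described. This keeps the isotropy dimension jump from entering the argument.
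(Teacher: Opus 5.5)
Your route to smoothness differs from the paper's. The paper takes each $\Omega_t$ from \cite{bursztyn2004integration} as a black box. It then checks smoothness only at points of the unit section, using the explicit formula \cite[Eq.~(3.14)]{bursztyn2004integration} for $\Omega_t$ at $\epsilon_t(x)$ in terms of $\mu_t$ and $\rho_{\K_t}$, and finally right-translates. You instead write $\exp_t^*\Omega_t$ as the path-space form pulled back along the geodesic family, which is visibly smooth in $(x,t,\xi)$. This controls $\boldsymbol{\Omega}$ on an open neighbourhood of the units, not merely along $\epsilon(\X)$, so it is the right base case for a propagation argument. The isotropy jump never enters, because the descent through $\sim_t$ is used only at fixed $t$. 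Your treatment of uniqueness and the twist identity is fine. Your non-degeneracy argument, via $\ker\mu_t\cap\ker\rho_{\K_t}=0$ and $\operatorname{rk}\K_t=\dim\X_t$, is also fine and somewhat more explicit than the paper's.

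The gap is in the propagation step. From $R_\sigma^*\boldsymbol{\Omega}=\boldsymbol{\Omega}+(\sigma\circ\mathbf{t})^*\boldsymbol{\Omega}$ you conclude smoothness near $g_0$ because ``$\sigma$ is a smooth local bisection''. But $\sigma$ passes through $g_0$. So $\sigma^*\boldsymbol{\Omega}$ depends on the values of $\boldsymbol{\Omega}$ along $\sigma(W)\ni g_0$, which is exactly where smoothness is still unknown, and smoothness of the map $\sigma$ does not help. Indeed, restricting the identity to the units only returns $\sigma^*\boldsymbol{\Omega}=\sigma^*\boldsymbol{\Omega}$, since $\epsilon^*\boldsymbol{\Omega}=0$. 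The paper's Step 2 has the same circularity: it asserts that $\sigma^*\Omega_t$ is smooth because $\Omega_t$ is smooth on the unit bisection.

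The repair is to translate only by bisections whose image lies inside the exponential neighbourhood. For example, take $\sigma=\exp\circ\xi$ for a small local section $\xi$ of $\K$ over an open set of the total space $\X$. Then $\sigma^*\boldsymbol{\Omega}$ is smooth by your base case, and the identity shows that smoothness of $\boldsymbol{\Omega}$ on an open set $V$ implies smoothness on $R_\sigma(V)$. Since $\mathbf{G}_{t_0}$ is source-connected, any $g_0$ is a finite product $g_1\cdots g_k$ of elements of $\exp(\mathcal{O}_C)$. Choose such bisections $\sigma_i$ with $\sigma_i(\mathbf{s}(g_i))=g_i$, each defined for nearby $t$ as well. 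The composite $R_{\sigma_1}\circ\cdots\circ R_{\sigma_k}$ then carries a neighbourhood of the unit $\epsilon(\mathbf{s}(g_0))$ onto a neighbourhood of $g_0$ in the total space. Induction on $k$ gives smoothness of $\boldsymbol{\Omega}$ everywhere.
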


\begin{proof}[Proof of Proposition \ref{prop:Omega-smooth}]
For each $t$, \cite{bursztyn2004integration} applied to the IM 2-form $\mu_t$ on $(\K_t, \lambda_{\X,t})$---integrable by
Proposition \ref{integrability prop}---produces a unique multiplicative 2-form $\Omega_t$ on $\mathbf{G}_t$ satisfying the twist and non-degeneracy conditions \eqref{eq:twist-fiberwise}, \eqref{non-degen}.

It remains to show that the family $(\Omega_t)_{t \in \mathbb{R}}$ assembles into a multiplicative section $\Omega$ of $\bigwedge^2 (\ker d\pi_{\mathbf{G}})^*$ depending smoothly on $t$ through the atlas of Proposition \ref{integrability prop}. For $g \in \mathbf{G}$ there is a unique $t$ with $g \in \mathbf{G}_t$; define
\[
  \Omega_g : T_g\mathbf{G} \times T_g\mathbf{G} \longrightarrow \mathbb{R},
  \qquad \Omega_g(u_1, u_2) := \Omega_t(g)(u_1, u_2),
\]
where $T_g\mathbf{G} \cong T_g\mathbf{G}_t$.

\medskip
\noindent\textbf{Step 1: Smoothness on the unit bisection.}
At a point $\epsilon_t(x) \in \mathbf{G}_t$, the tangent space splits as $T_{\epsilon_t(x)}\mathbf{G}_t = T_x \X_t \oplus \K_{t,x}$ via
$v \mapsto (ds_t\, v,\ v - ds_t\, v)$. By \cite[Eq. (3.14)]{bursztyn2004integration}, in this splitting
\begin{equation}
  \Omega_{\epsilon_t(x)}\bigl((X,\alpha),(Y,\beta)\bigr)
  = \mu_t(\alpha)(Y) - \mu_t(\beta)(X) + \langle \mu_t(\alpha), \rho_{K_t}(\beta) \rangle,
\end{equation}
for $X, Y \in T_x \X_t$ and $\alpha, \beta \in \K_{t,x}$. The right-hand side is
smooth in $(x, t, X, Y, \alpha, \beta)$:
\begin{itemize}
  \item $\mu$ is smooth by Lemma \ref{IM 2 form};
  \item $\rho_{\K_t}$ is smooth in $t$ by the definition of a Dirac deformation (the anchor is the restriction to $\K$ of the smooth projection $\mathbb{T}\X \to T\X$);
  \item the splitting $T\mathbf{G}_t = T\X_t \oplus \K_t$ at the unit bisection is smooth in $t$ by Proposition~\ref{integrability prop}.
\end{itemize}
Therefore $\Omega$ restricted to the unit bisection $\epsilon(\mathcal{X})$ is a
smooth fiberwise 2-form.

\medskip
\noindent\textbf{Step 2: Smoothness on the rest of $\mathbf{G}$ via right-translation.}
Let $g \in \mathbf{G}_t$ with $t_t(g) = y$. By the right-translation reduction
\cite[Eq.(3.4)]{bursztyn2004integration}, for
$\alpha_y \in \ker(ds_t)_y$ and $v_g \in T_g\mathbf{G}_t$,
\begin{equation}
  \Omega_t(g)\bigl((dR_g)_y(\alpha_y), v_g\bigr)
  = \Omega_t(y)\bigl(\alpha_y, (dt_t)_g v_g\bigr).
\end{equation}
The right-hand side is the value of $\Omega_t$ at the unit corresponding to the source of $g$ via the unit bisection, which is smooth by Step 1. Finally,
$T_g\mathbf{G}_t \cong dR_g(T_y\mathbf{G}_t) = dR_g(T_x\X \oplus \K_{t,x})$, so for any $u_1, u_2 \in T_g\mathbf{G}_t$ there exist $X_i \in T_x \X_t$ and $\alpha_{i,y} \in \K_{x,t}$ with $u_i = dR_g(X_i + \alpha_{i,y})$, $i \in \{1,2\}$.
Then
\[
  \Omega_t(g)(u_1, u_2)
  = \Omega_t(g)\bigl(dR_g(X_1), \alpha_{2,y}\bigr)
  + \Omega_t(g)\bigl(\alpha_{1,y}, dR_g(X_2)\bigr)
  + \Omega_t(g)\bigl(\alpha_{1,y}, \alpha_{2,y}\bigr)
  + \Omega_t(g)\bigl(dR_g(X_1), dR_g(X_2)\bigr).
\]
The first three terms are smooth by the argument above. For the last term,
\[
  \Omega_t(g)\bigl((dR_g)_y(X_1), (dR_g)_y(X_2)\bigr) = (\sigma^* \Omega_t)_y(X_1, X_2),
\]
and $\sigma^* \Omega_t$ varies smoothly in $(y, t)$ because $\Omega_t$ is already smooth on the unit bisection and the bisection $\sigma$ depends smoothly on
$(g, t)$ by Proposition \ref{integrability prop} (Step 2 of its proof).
\end{proof}

Now, we are ready to give the proof of Theorem \ref{groupoid integra}.

\begin{proof}[Proof of Theorem \ref{groupoid integra}]
The first two points of Theorem \ref{groupoid integra} are a direct consequence of Proposition \ref{integrability prop}, Lemma \ref{IM 2 form} and Proposition \ref{prop:Omega-smooth}. When $t = 0,$ $\lambda_{X,0} = 0$ so $d\mathbf{\Omega}_0 = 0$ and $\K_0$ is a Poisson structure.  By \cite[Theorem 2.5]{bursztyn2004integration}, The IM map $\mu_0: \K_0 \too T^{*}\X_0$ is the cotangent algebroid identification, hence an isomorphism, and the unit bisection formula \cite[Eq. (3.14)]{bursztyn2004integration} together with the right translation shows that $\Omega_0$ is non-degenerate as a 2-form. Hence, $(\mathbf{G}_0,\mathbf{\Omega}_0)$ is the symplectic groupoid integrating the Poisson structure $\K_0.$ 
\end{proof}

\subsection{The fundamental example}
\begin{Prop} \label{prop:fund}
    The deformation of the Cartan-Dirac structure $L_G$ to the KKS Poisson structure $L_{\g^{*}}$ lifts to a smooth deformation of the quasi symplectic groupoid $D(G) \rightrightarrows G$ to $T^{*}G \rightrightarrows \g^{*}.$
\end{Prop}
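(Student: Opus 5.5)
I would prove Proposition \ref{prop:fund} by an explicit construction rather than through the abstract integration of Theorem \ref{groupoid integra}. The abstract route only produces \emph{some} smooth family of source-simply-connected groupoids, and $D(G)$ is not source-simply-connected when $G$ is not simply connected. The construction has four steps: build the total space, check smoothness of the structure maps, check smoothness of the rescaled form, and identify the Lie algebroids.

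\textbf{Step 1: the total space.} Set $\mathbf{\Gamma}=G\times\G$. For $t\neq0$, the fiber over $\G_t\cong G$ is $\mathbf{\Gamma}_t=D(G)=G\times G$. At $t=0$, the fiber is $G\times\g\cong T^{*}G$, using left trivialization and $\g\cong\g^{*}$. The charts are $\mathrm{Id}_G\times\psi$, i.e.
\[
(a,y,t)\mapsto\bigl(a,\exp(ty),t\bigr) \quad (t\neq 0),\qquad (a,y,0)\mapsto (a,y,0).
\]
In these charts, the source and target of $D(G)$ read
\[
s(a,\exp(ty))=\exp(t\Ad_a y),\qquad t(a,\exp(ty))=\exp(-ty).
\]
Hence, in $\psi$-coordinates, $s(a,y,t)=(\Ad_a y,t)$ and $t(a,y,t)=(-y,t)$. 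These formulas are manifestly smooth across $t=0$, and at $t=0$ they are exactly the source and target of $T^{*}G\rightrightarrows\g^{*}$. Units, inverses and multiplication are handled the same way. The product of composable arrows involves $\exp(ty)\exp(ty')$ in the second slot, and its $\psi$-coordinate is $\frac1t\log(e^{ty}e^{ty'})$. By BCH this equals $y+y'+\frac t2[y,y']+O(t^2)$, which is smooth in $t$ and tends to $y+y'$, the multiplication of $T^{*}G$. Since BCH converges only near the identity, I would work on $\psi(U)$ and use the fact that the deformation space $\D(G,\{1\})$ is a manifold; the structure maps are then smooth there by \cite{burelle2026deformations}.

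\textbf{Step 2: the rescaled form.} Define $\mathbf{\Omega}_t=\omega_{D(G)}/t$ for $t\neq0$, consistent with $\L_t$ carrying $\sigma/t$ and $\eta_{\G,t}=\eta_G/t$. I would pull it back by $\mathrm{Id}\times\psi_t$, using the formulas of Lemma \ref{lem:cov}: $b^{*}\theta^L=t\,S_t(dy)$ and $b^{*}\theta^R=t\,e^{tA}S_t(dy)$. Each term of $\omega_{D(G)}$ either contains a factor $b^{*}\theta^{L/R}$, which gives $O(t)$, or is the term $\frac12\langle\Ad_b\,a^{*}\theta^L\wedge a^{*}\theta^L\rangle$, which does not. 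The second term is $\frac{t}{2}\langle a^{*}\theta^L\wedge(2+O(t))dy\rangle$, so after division by $t$ it tends to $\langle a^{*}\theta^L\wedge dy\rangle$, which is the canonical symplectic form of $T^{*}G$ in left trivialization, up to the sign and convention for $t(a,\xi)=-\xi$.

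\textbf{Main obstacle.} The first term stays $O(1)$ rather than $O(t)$, so after rescaling it blows up. I would resolve this by noting that the correct deformation is not $\omega_{D(G)}/t$ alone. It should be corrected by a coboundary $s^{*}\beta_t-t^{*}\beta_t$, or equivalently by a rescaling of the first $G$-factor. The natural candidate is the gauge freedom coming from the difference between the conventions $s(a,b)=\Ad_ab$ and $b^{-1}$. Alternatively, and as the path I would actually try first, I would invoke Proposition \ref{prop:Omega-smooth}: Steps 1 and 3 already give a smooth groupoid family integrating $\K$ with isotropy-compatible identifications, and the uniqueness part of \cite{bursztyn2004integration} says that the smooth $\mathbf{\Omega}$ produced there must coincide fiberwise with $\omega_{D(G)}/t$ and with $\omega_{T^{*}G}$. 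This yields smoothness without resolving the term-by-term cancellation by hand.

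\textbf{Step 3: Lie algebroids and the twist identity.} I would check that $(\rho_t,\Omega_t^\flat|_{T\X})$ identifies $\Lie(\mathbf{\Gamma}_t)$ with $\L_t$, smoothly in $t$. The anchor of $\Lie(D(G))$ at $b$ is $x\mapsto x^L-x^R$, which in $\psi$-coordinates is $[y,x]$ by Lemma \ref{vect}. The form component is $\sigma(x)/t$, which in $\psi$-coordinates is $s_x$ by Lemma \ref{lem:cov}. Thus the identification is given by the spanning sections of Lemma \ref{sections s}, which are smooth through $t=0$. The twist identity $d\mathbf{\Omega}_t=s^{*}\lambda_t-t^{*}\lambda_t$ then follows from the case $t=1$ by homogeneity, and it holds at $t=0$ by continuity, using Lemma \ref{smooth eta}.
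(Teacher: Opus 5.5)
The gap is in Step 2, which is exactly where the proposition has content. The ``main obstacle'' you identify does not exist: the first term of $\omega_{D(G)}$ is $O(t)$, not $O(1)$. Write $\theta=a^*\theta^L$, $A=\ad_y$ and $b=\exp(ty)$. Using $\Ad$-invariance of the pairing,
\[
\tfrac12\langle \Ad_b\theta\wedge\theta\rangle(u,v)=\tfrac12\bigl\langle(\Ad_b-\Ad_{b^{-1}})\theta(u),\theta(v)\bigr\rangle=\bigl\langle\sinh(tA)\,\theta(u),\theta(v)\bigr\rangle .
\]
This vanishes at $t=0$ simply because $\langle\theta\wedge\theta\rangle=0$ for a symmetric pairing. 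After division by $t$ it tends to $\langle[y,\theta(u)],\theta(v)\rangle=\langle y,[\theta(u),\theta(v)]\rangle$.

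You also misidentify the second term. Its limit $\langle\theta^L\wedge dy\rangle$ is \emph{not} by itself the canonical form; it is not even closed, since $d\theta^L\neq 0$. The missing piece is exactly the limit of the first term. Together they give $-d\langle y,\theta^L\rangle$, the canonical symplectic form of $T^*G\cong G\times\g^*$ in left trivialization. So $\omega_{D(G)}/t$ itself extends smoothly with no correction, which is what the paper asserts via the argument of \cite[Theorem 4.3]{burelle2026deformations}. The repair to your proof is just this computation.

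Neither of your proposed remedies would have worked anyway.
\begin{itemize}
\item A coboundary correction $s^*\beta_t-t^*\beta_t$ changes the IM form by $\beta_t$ composed with the anchor. The corrected groupoid then integrates the gauge transform $\tau_{\pm\beta_t}\L_t$, with twist shifted by $\pm d\beta_t$, rather than the rescaled Cartan--Dirac structure $\L_t$ you need.
\item The fallback through Proposition \ref{prop:Omega-smooth} contradicts itself. If uniqueness forced a smooth $\mathbf{\Omega}$ to agree with $\omega_{D(G)}/t$ for $t\neq 0$, then $\omega_{D(G)}/t$ would extend smoothly, contradicting the blow-up you just claimed.
\item That proposition is also proved for the source-simply-connected Weinstein family of Proposition \ref{integrability prop}. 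You deliberately set this family aside because $D(G)$ need not be source-simply-connected.
\end{itemize}

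There is a smaller error in Step 1. The groupoid multiplication of $D(G)$, an action groupoid for conjugation, is $(a_1,b_1)\cdot(a_2,b_2)=(a_1a_2,b_2)$, so no product $\exp(ty)\exp(ty')$ occurs. In $\psi$-coordinates the product is $(a_1a_2,y_2)$, which is trivially smooth and is the groupoid product of $T^*G\rightrightarrows\g^*$. The law $y+y'$ you obtained from BCH is the group law of $T^*G\cong G\ltimes\g^*$, not its groupoid multiplication over $\g^*$.
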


\begin{proof}
Define the groupoid $\mathbf{G} = \D =  G \times \D(G,\{1\}) = (D(G) \times \R^{*}) \bigsqcup (T^{*}G \times \{0\})$ over $\G = \D(G,\{1\})$ By \cite[Theorem 4.1]{burelle2026deformations}, $\mathbf{G}$ is a smooth manifold whose source and target maps are smooth maps given by 
    \begin{align*}
        \mathbf{s}: \mathbf{G} \too \G, \quad m \too \begin{cases}
            (\Ad_{a}b,t), & \text{ if } m \in \D_t, t\neq 0,\\
            (\Ad_{a}x,0),& \text{ if } m = ((a,x),0) \in D_0,
        \end{cases}
    \end{align*}

    \begin{align*}
        \mathbf{t}: \mathbf{G} \too \G, \quad m\mtoo \begin{cases}
            (b^{-1},t), & \text{ if } m = (a,b,t)\in \D_{t}, t\neq 0\\
            (-x,0),& \text{ if } m = ((a,x),0) \in \D_0.
        \end{cases}
    \end{align*}

Recall that the multiplication in $D(G)$ is $(a_1,b_1)\cdot (a_2,b_2) = (a_1a_2,b_2)$ when $s(a_1,b_1) = t(a_2,b_2),$ i.e., $\Ad_{a_1}b_1 = b_{2}^{-1}.$ The multiplication deforms smoothly: at $t \neq 0$ is a group multiplication, and at $t = 0$ it becomes \[
(a_1,y_1)\cdot (a_2,y_2) = (a_1a_2,y_2),
\]
which is the groupoid multiplication in $T^{*}G.$ Finally, define the multiplicative two-form $\mathbf{\Omega}$ to be $\widehat{\omega}.$ For $t \neq 0, \mathbf{\Omega}_t = \dfrac{\omega_{D(G)}}{t}$ and the same argument used in \cite[Theorem 4.3]{burelle2026deformations} shows that the rescaled form extends smoothly to $t = 0$ to the canonical symplectic form $\mathbf{\Omega}_0 = \omega_{T^{*}G}.$  Finally, at $t \neq 0,$ $(\mathbf{G}_t,\mathbf{\Omega}_t) = (D(G),\dfrac{\omega_{D(G)}}{t})$ integrates the Cartan-Dirac structure $\K_t$ on $\G_t \cong G.$ At $t = 0,$ $(\mathbf{G}_0,\mathbf{\Omega}_0) = (T^{*}G,\omega_{T^{*}G})$ is the symplectic groupoid integrating $(\g^{*},\pi_{\mathrm{KKS}}).$
\end{proof}

\subsection{Dirac deformation and infinitesimal Morita equivalence}

The Morita equivalence of Lie algebroids provides an infinitesimal criterion that is particularly well-suited to Dirac deformations. We use the dual-pair characterization: two Lie algebroids $A_1 \to X_1$ and $A_2 \to X_2$ are Morita equivalent if there exists a manifold $P$ with surjective submersions $J_1 \colon P \to X_1$ and $J_2 \colon P \to X_2$, whose fibers are connected and simply-connected, together with an isomorphism of pullback Lie algebroids $$J_1^! A_1 \;\cong\; J_2^! A_2 \quad \text{over } P,$$ such that the resulting Lie algebroid actions on $P$ are complete. When $A_1 = K_1$ and $A_2 = K_2$ are Dirac structures, this is equivalent, under transversality (Lemma \ref{lem:smooth-pullback}) to the Dirac identity $K_1 = (J_1)_{*}J^{*}_{2}K_2.$

\begin{Def}\label{def:family-dual}
Let $(\X^{(1)}, \K^{(1)})$ and $(\X^{(2)}, \K^{(2)})$ be two Dirac deformations. A \emph{smooth family of dual pairs} between them is a smooth manifold $\PP \to \R$ with two smooth submersions $J_i \colon \PP \to \X^{(i)}$ commuting with the projections to $\R$, such that for each $t \in \R$:
\begin{enumerate}
  \item $J_{i,t} \colon P_t \to X_t^{(i)}$ is a complete surjective submersion with connected, simply-connected fibers;
  \item $J_{1,t}$ is transverse to $K_t^{(2)}$ and $J_{2,t}$ is transverse to $K_t^{(1)}$ in the sense of Dirac pullback;
  \item the dimension of the Dirac pullback $J_{i,t}^* K_t^{(j)}$ is locally constant in $t$ on $P_t$ (equivalently, the rank of $K_t^{(j)} \cap (J_i)_* TP_t$ is locally constant).
\end{enumerate}
\end{Def}
 
The role of (3) is to ensure that the Dirac pullback is a smooth Dirac structure on $\PP$, varying smoothly in $t$ across $t = 0$. We now make this explicit.
 
\begin{Lem}\label{lem:smooth-pullback}
Under the hypotheses of Definition~\ref{def:family-dual}, the Dirac pullback $J_{i,t}^* K_t^{(j)}$ is a smooth Dirac structure on $P_t$ for each $t \in \R$, and the assignment $t \mapsto J_{i,t}^* K_t^{(j)}$ is smooth as a section of the Grassmannian bundle of subspaces of constant dimension in $TP \oplus T^*P$.
\end{Lem}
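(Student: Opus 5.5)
The plan is to follow the same template as Step 2 of the proof of Theorem \ref{thm:main}. First establish that the total backward image is a smooth subbundle of constant rank. Then derive the fiberwise Dirac property from the standard pullback theorem.

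Work over $\PP$ and consider the smooth map $J_i\colon \PP\to \X^{(j)}$ of families; for definiteness take the map into the same family whose structure we pull back, and write $J=J_{i}$, $\K=\K^{(j)}$. Because $J$ is a submersion commuting with the projections to $\R$, its vertical differential $dJ_t\colon TP_t\to T\X_t$ is fiberwise surjective. The fiberwise pullback is
\[
J_t^{*}\K_t=\{(v,dJ_t^{*}\alpha)\;:\;(dJ_t v,\alpha)\in \K_t\}.
\]
It is the image of the smooth bundle map
\[
\Theta\colon TP\times_{T\X}\K\longrightarrow \mathbb{T}^{\mathrm{vert}}\PP,\qquad (v,(w,\alpha))\mapsto (v,dJ^{*}\alpha).
\]
The domain of $\Theta$ is a smooth vector bundle, being the fiber product of $TP$ and $\K$ over the surjective map $dJ$. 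Its kernel is $\{(0,(0,\alpha)) : (0,\alpha)\in\K,\ dJ^{*}\alpha=0\}$. Since $dJ$ is surjective, $dJ^{*}$ is injective, so this kernel vanishes. Hence $\Theta$ has constant rank $\dim P_t$.

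At this point the rank of $\K\cap (dJ)(TP)$-type intersections enters only through transversality. For a submersion this is automatic, which is consistent with hypothesis (2). Hypothesis (3) is then used exactly as (H3) was in Theorem \ref{thm:main}: it guarantees that the rank is locally constant across $t=0$, ruling out the upward jump allowed by upper-semicontinuity. This is the argument in item (i) of that proof, with $T^{\circ}\C$ replaced by $\ker(dJ^{*})$. An injective smooth bundle map of locally constant rank has a smooth image subbundle. This gives smoothness in $t$ as a section of the Grassmannian bundle, by \cite[Proposition 5.6]{bursztyn2013brief}.

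For each fixed $t$, the fiber $J_t^{*}\K_t$ is Lagrangian because pullback preserves Lagrangians. It is closed under the $J_t^{*}\lambda_{\X,t}$-twisted bracket by the pullback results recalled in the preliminaries, since it has constant rank. At $t=0$ the twist is zero and the ordinary untwisted statement applies.

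The main obstacle is the uniformity across $t=0$. Pointwise the rank is $\dim P_t$ by the kernel computation, but smoothness of the image in the total space requires care. My first attempt would be to write local frames: take a smooth local frame $e_1,\dots,e_r$ of $\K$ and a complement of $\ker dJ$. Lifting $dJ$-preimages smoothly, via a smooth right inverse of $dJ$ given by a connection on the submersion, produces smooth sections of $\Theta$-images that span at every $(p,t)$. If that explicit lifting works, hypothesis (3) is needed only to reconcile with the paper's formulation.
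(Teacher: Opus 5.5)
Your proof is correct, and it takes a more direct route than the paper's.

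\textbf{How the paper argues.} It works fiber by fiber. At each $t$ it invokes the transversality criterion of \cite[Proposition 5.6]{bursztyn2013brief}, using hypothesis (2). It then appeals to hypothesis (3) to assert, without further argument, that the fiberwise pullbacks assemble into a smooth section of the Grassmannian bundle.

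\textbf{How you argue.} You realize the whole family $J^*\K$ at once as the image of the smooth bundle map $\Theta$ over the total space $\PP$. Reading $TP$ and $T\X$ as vertical bundles, the domain $T^{\mathrm{vert}}\PP\times_{J^*T^{\mathrm{vert}}\X}J^*\K$ is a smooth bundle of rank $\dim P_t$, because $dJ$ is fiberwise surjective. The map $\Theta$ is injective because $dJ^*$ is. Since both facts hold over the total space, they give smoothness jointly in $(p,t)$, including across $t=0$. They also give the correct rank $\dim P_t$, whereas the paper's proof says ``the same rank as $L$.''

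\textbf{What your route buys.} It shows that hypotheses (2) and (3) of Definition~\ref{def:family-dual} are redundant once the $J_i$ are submersions. The clean-intersection quantity $\K\cap\ker(dJ^*)$ is identically zero, so there is no upward jump to exclude.

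\textbf{What you can cut.} The last third of your proposal is unnecessary. The upper-semicontinuity worry, the role you assign to (3), and the frame-lifting via a connection all add nothing. Injectivity of $\Theta$ on a constant-rank bundle over $\PP$ is exactly what settles uniformity in $t$, so you can state the conclusion with confidence rather than hedging.

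\textbf{Remaining claims.} The fiberwise Lagrangian property and closure under the $J_t^*\lambda_{\X,t}$-twisted bracket then follow from the standard pullback facts, as you say.
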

 
\begin{proof}
The Dirac pullback under a smooth map $f \colon N \to M$ is defined as
\[
  f^* L = \{ (X, f^* \alpha) \in TN \oplus T^*N : (df(X), \alpha) \in L \}.
\]
When $f$ is transverse to $L$ (in the sense that $df(TN) + \mathrm{pr}_{TM}(L) = TM$), the pullback $f^*L$ is a smooth sub-bundle of $TN \oplus T^*N$ of the same rank as $L$. This is \cite[Proposition 5.6]{bursztyn2013brief} and is the standard transversality criterion for smooth Dirac pullback.
 
In our setting, $f = J_{i,t}$ varies smoothly in $t$ together with $L = K_t^{(j)}$. Hypothesis (2) of Definition~\ref{def:family-dual} provides transversality at each $t$. Hypothesis (3) ensures the rank of the pullback is locally constant in $t$. Together these imply that $J_{i,t}^* K_t^{(j)}$ is a smooth section of a Grassmannian bundle of fixed-dimensional subspaces of $TP \oplus T^*P$, smooth in $t$.
\end{proof}
 
\begin{Prop}\label{prop:morita-LA}
Let $(\X^{(1)}, \K^{(1)})$ and $(\X^{(2)}, \K^{(2)})$ be two Dirac deformations, and let $\PP$ be a smooth family of dual pairs between them. If the Dirac algebroids $K_t^{(1)}$ and $K_t^{(2)}$ are Morita equivalent via $(P_t, J_{1,t}, J_{2,t})$ for each $t \neq 0$, then the Poisson Lie algebroids $K_0^{(1)}$ and $K_0^{(2)}$ at $t = 0$ are Morita equivalent via $(P_0, J_{1,0}, J_{2,0})$.
\end{Prop}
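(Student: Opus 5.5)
The plan is to verify, at $t=0$, each ingredient of the dual-pair characterization of Morita equivalence recalled just before Definition \ref{def:family-dual}. The topological and completeness conditions, together with the existence of the pullback algebroids, come directly from the family hypotheses. The substantive point is the isomorphism $J_{1,0}^!K_0^{(1)}\cong J_{2,0}^!K_0^{(2)}$, which I will obtain as a limit of the isomorphisms for $t\neq0$, using closedness of the relevant incidence conditions.

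\textbf{Step 1: the non-algebraic conditions at $t=0$.} Definition \ref{def:family-dual}(1) holds for every $t$, including $t=0$. So $J_{i,0}\colon P_0\to X^{(i)}_0$ is already a complete surjective submersion with connected, simply-connected fibres. By Lemma \ref{lem:smooth-pullback}, the pullbacks $J_{i,t}^*K^{(j)}_t$ are smooth Lagrangian families across $t=0$. The same transversality and rank hypotheses, Definition \ref{def:family-dual}(2)--(3), show that the algebroid pullbacks $J_{i,t}^!K_t^{(i)}=\{(v,a): dJ_{i,t}v=\rho(a)\}$ have constant rank $\dim P_t-\dim X^{(i)}_t+\rk K^{(i)}_t$. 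Hence they assemble into smooth vector bundles $J_i^!\K^{(i)}$ over $\PP$. Their brackets and anchors depend smoothly on $t$, because the Dorfman brackets and anchors of $\K^{(i)}$ do.

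\textbf{Step 2: passing the isomorphism to the limit.} I would work with the Dirac reformulation $K^{(1)}_t=(J_{1,t})_*J_{2,t}^*K^{(2)}_t$ stated before Definition \ref{def:family-dual}, which holds for $t\neq0$. Equivalently, $J_{1,t}^*K^{(1)}_t=J_{2,t}^*K^{(2)}_t$ as Lagrangian subbundles of $\mathbb{T}P_t$. This equivalence uses surjective submersions with connected fibres, so that pushforward along $J_{1,t}$ is well defined and inverse to pullback.
\begin{itemize}
\item By Lemma \ref{lem:smooth-pullback}, both sides are continuous sections of the same Lagrangian Grassmannian bundle over $\PP$.
\item They agree on the dense set $\{t\neq0\}$, so they agree at $t=0$.
\item At $t=0$ we therefore get $J_{1,0}^*K^{(1)}_0=J_{2,0}^*K^{(2)}_0$. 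Pushing forward along the submersion $J_{1,0}$, whose fibres are connected, yields $K_0^{(1)}=(J_{1,0})_*J_{2,0}^*K^{(2)}_0$.
\item The twists vanish at $t=0$, and the Dirac identity is compatible with both pullbacks, so the identity is an equality of untwisted (Poisson) Dirac structures.
\end{itemize}
Via the equivalence quoted before Definition \ref{def:family-dual}, which is valid under the transversality of Lemma \ref{lem:smooth-pullback}, this gives the algebroid isomorphism $J_{1,0}^!K_0^{(1)}\cong J_{2,0}^!K_0^{(2)}$.

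A slightly more concrete route gives the same result. The isomorphisms $\Psi_t$ for $t\neq0$ are canonical: each is induced by the identification of both pullbacks with the common Lagrangian $J_{1,t}^*K^{(1)}_t=J_{2,t}^*K^{(2)}_t$. Since they are given by the formula of the Dirac pullback, they extend continuously to $t=0$. The bracket-preservation identity $\Psi_t[a,b]=[\Psi_ta,\Psi_tb]$ is a closed condition on smooth sections, so it persists at $t=0$.

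\textbf{Main obstacle.} I expect the hardest step to be justifying that the canonical identification is honestly the Dirac-pullback identity, and that no rank jump occurs at $t=0$. The worry is that a limit of equal Lagrangians could be computed through a degenerate intersection $K^{(j)}_0\cap (J_i)_*TP_0$. This is exactly what Definition \ref{def:family-dual}(3) excludes. I would invoke it together with the upper-semicontinuity argument already used in part (i) of Step 2 of the proof of Theorem \ref{thm:main}: constant rank of the intersection forces the intersection to vary continuously, so the limiting subspace is the pullback of the limiting structure rather than merely contained in it. Given that, the limit argument closes.
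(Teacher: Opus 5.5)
\textbf{There is a genuine gap in Step 2.} Your limiting mechanism is the paper's: two continuous Lagrangian families that agree on the dense set $\{t\neq 0\}$ agree at $t=0$. However, you run it on $\PP$ rather than on $\X^{(1)}$, and the step that moves you there is false.

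You assert that $K^{(1)}_t=(J_{1,t})_*J_{2,t}^*K^{(2)}_t$ is equivalent to $J_{1,t}^*K^{(1)}_t=J_{2,t}^*K^{(2)}_t$, because pushforward along $J_{1,t}$ is ``inverse to pullback''. It is only a left inverse. For a submersion $J$ one has $J_*J^*K=K$, so the pullback identity implies the pushforward identity. The converse $J^*J_*L=L$ holds if and only if $\ker dJ\oplus 0\subset L$. For $L=J_{2,t}^*K^{(2)}_t$ this means $dJ_{2,t}(\ker dJ_{1,t})\subset \ker K^{(2)}_t$, and nothing in Definition~\ref{def:family-dual} provides that.

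A concrete counterexample: take $P=X_1\times X_2$, $J_i=\mathrm{pr}_i$, $X_2$ symplectic and $K_1=TX_1\oplus 0$. Then $(\mathrm{pr}_1)_*\mathrm{pr}_2^*L_{\omega_2}=TX_1\oplus 0$, whereas $\mathrm{pr}_1^*(TX_1\oplus 0)=TP\oplus 0\neq \mathrm{pr}_2^*L_{\omega_2}$.

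In fact the identity you carry to $t=0$ is far stronger than Morita equivalence. Since $K^{(1)}_0$ and $K^{(2)}_0$ are Poisson, $J_{1,0}^*K^{(1)}_0=J_{2,0}^*K^{(2)}_0$ forces $\ker dJ_{1,0}=\ker dJ_{2,0}$, so both legs would have the same fibres. That fails for typical dual pairs. So Step 2 starts from an equality on $\PP$ that the hypothesis at $t\neq 0$ does not supply. Your ``more concrete route'' has the same defect, since the canonical $\Psi_t$ are defined through that common Lagrangian.

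\textbf{How to repair it.} Run the density argument where the paper does: compare $K^{(1)}_t$ with $(J_{1,t})_*J_{2,t}^*K^{(2)}_t$ directly as Lagrangian families over $\X^{(1)}$. The delicate point is then not the one in your ``main obstacle'' paragraph. Backward images along fibrewise submersions, which is all Lemma~\ref{lem:smooth-pullback} controls, are automatically smooth across $t=0$.

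What must be justified is continuity in $t$ of the \emph{forward} image $(J_{1,t})_*J_{2,t}^*K^{(2)}_t$. This holds when $\ker dJ_{1,t}\cap dJ_{2,t}^{-1}(\ker K^{(2)}_t)$ has locally constant rank on $\PP$. That rank is only upper semicontinuous, so a priori it can jump at $t=0$. This is precisely the continuity for which the paper invokes Definition~\ref{def:family-dual}(3).
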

 
\begin{proof}
By Lemma~\ref{lem:smooth-pullback}, the family $J_{i,t}^* K_t^{(j)}$ is a smooth Dirac structure on $P_t$ varying smoothly in $t$. The pushforward $(J_{1,t})_* J_{2,t}^* K_t^{(2)}$ is well-defined as a Dirac structure on $X_t^{(1)}$ because $J_{1,t}$ is a complete submersion with connected, simply-connected fibers (hypothesis (1) of Definition~\ref{def:family-dual}). The dimension of the pushforward equals $\dim X_t^{(1)}$, which is locally constant in $t$ (this is part of the smooth family of Dirac-deformations data).
 
For $t \neq 0$, Morita equivalence holds by hypothesis, and is witnessed by $(P_t, J_{1,t}, J_{2,t})$ together with the identity $K_t^{(1)} = (J_{1,t})_* J_{2,t}^* K_t^{(2)}$. Both sides are smooth sections of the Grassmannian bundle of constant dimensional subspaces of $T\X^{(1)} \oplus T^*\X^{(1)}$, smooth in $t$. They agree on $\{t \neq 0\}$, which is dense in $\R$. By continuity of sections of a Grassmannian bundle, ensured by the local constancy rank condition (Definition \ref{def:family-dual}(3)), they agree at $t = 0$:
\[
  K_0^{(1)} = (J_{1,0})_* J_{2,0}^* K_0^{(2)}.
\]
This is the dual-pair characterization of Morita equivalence for $K_0^{(1)}$ and $K_0^{(2)}$, which are Poisson by the Dirac deformation hypothesis.
\end{proof}
 
\begin{Rem}
Proposition~\ref{prop:morita-LA} does not claim that Morita equivalence of Dirac structures is preserved under arbitrary deformations: it requires a smooth family of dual pairs witnessing the equivalence, including the transversality, completeness, and rank-constancy conditions of Definition~\ref{def:family-dual} at $t = 0$. These conditions can genuinely fail under degenerations.
\end{Rem}

\subsection{Compatibility with Hamiltonian spaces}
For a Hamiltonian $\G$-space, the integration of the infinitesimal action to a groupoid action requires a completeness hypothesis. We state this explicitly.

\begin{Def} \label{Completeness}
    Let $\G \rightrightarrows \X$ be a deformation of quasi-symplectic groupoids, and let $\widehat{\varphi}: \M \too \X$ be a smooth family of strong Dirac maps over $\R.$ The associated Lie algebroid action $\rho: \K \times_{\X} T\M$ is \textit{complete} at \textit{parameter} $t$ if every $K_t$-path starting in $M_t$ can be integrated on all of $[0,1].$ The family is \textit{complete} if this holds for every $t.$
\end{Def}

\begin{Thm}\label{thm:ham-deform}
Let $\mathbb{G} \rightrightarrows \X$ be a deformation of a quasi-symplectic groupoids with each $\G_t$ source-simply connected, and let $(M_1,\omega_1,\Phi_1)$ be a Hamiltonian $\G_1$-space. Suppose: \begin{itemize}
\item [(i)] $(M_1,L_{\omega_1})$ admits a smooth Dirac deformation $(\M,L_{\M})$ of graph type, with $L_{M,t} = L_{\omega_t}$ for a smooth family $\omega_t$ of non-degenerate 2-forms such that $\omega_0$ is symplectic;
\item [(ii)] There exists a smooth family of strong Dirac maps $\widehat{\varphi}_t: M_t \too X_t$ deforming $\Phi_1$;
\item [(iii)] The associated Lie algebroid action (of Definition \ref{Completeness}) is complete.
\end{itemize}

Then, \begin{enumerate}
    \item For each $t\neq 0,$ $(\M_t,\omega_t,\widehat{\varphi}_t)$ is a Hamiltonian $\G_t$-space.
    \item At $t = 0,$ $(M_0,\omega_0,\widehat{\varphi}_0)$ is a Hamiltonian $\G_0-$space; equivalently, denoting $\pi_{M_0}$ the Poisson structure with $L_{\pi_{\M_0}} = L_{\omega_0},$ the map $\widehat{\varphi}_0: (\M_0, \pi_0) \too (\X_0,\K_0)$ is a Poisson map.
    \item  The $\G_t$-action on $\M_t$ varies smoothly in $t,$ and the action at $t =0$ is the symplectic-groupoid integration of the Poisson map $\widehat{\varphi}_0.$
\end{enumerate}
\end{Thm}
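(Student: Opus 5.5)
The plan is to reduce Theorem~\ref{thm:ham-deform} to the fiberwise correspondence between Hamiltonian $\G_t$-spaces and complete strong Dirac maps into the base of a source-simply-connected quasi-symplectic groupoid, due to Xu \cite{xu2004momentum} and Bursztyn--Crainic \cite{bursztyn2005dirac,bursztyn2004integration}. Smoothness in $t$ will then follow from the smooth atlas of Proposition~\ref{integrability prop}.

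For part (1), fix $t\neq 0$. By (ii), $\widehat{\varphi}_t\colon (\M_t,L_{\omega_t})\to(\X_t,\K_t)$ is a strong Dirac map. This gives exactly conditions (2) and (3) of Definition~\ref{def:ham}: $d\omega_t=-\widehat{\varphi}_t^{*}\lambda_{\X,t}$ and $\Ker(d\widehat{\varphi}_t)\cap\Ker\omega_t=0$. The strong Dirac map also defines an infinitesimal action $\rho_t\colon \K_t\times_{\X_t}\M_t\to T\M_t$, $(w,\alpha)\mapsto v$, as recalled in the Preliminaries. By (iii) this action is complete. Since $\G_t$ is source-simply-connected, the $\K_t$-paths with base in $\widehat{\varphi}_t(\M_t)$ then integrate to a $\G_t$-action along $\widehat{\varphi}_t$, by the integration of complete Lie algebroid actions. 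The isotropy condition (1) of Definition~\ref{def:ham} for the graph follows from the infinitesimal identity $\omega_t(\rho_t(a),\cdot)=\widehat{\varphi}_t^{*}\mu_t(a)$, which is the $f$-Dirac property written in terms of the IM form $\mu_t$ of Lemma~\ref{IM 2 form}. One propagates this identity along the action groupoid by multiplicativity of $\Omega_t$, as in \cite{xu2004momentum}.

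Part (2) is the same argument at $t=0$. There $\lambda_{\X,0}=0$, $\omega_0$ is symplectic and $\K_0=L_{\pi_{\X_0}}$. The $f$-Dirac condition $(\widehat{\varphi}_0)_*L_{\omega_0}=\K_0$ is then equivalent to $\widehat{\varphi}_0$ being Poisson, and the strong condition is automatic since $\Ker\omega_0=0$. One point must be checked: the limiting map $\widehat{\varphi}_0$ must actually be $f$-Dirac, and hypothesis (ii) is read as asserting this for all $t$. If it only held for $t\neq0$, I would argue by continuity. Both $(\widehat{\varphi}_t)_*L_{\omega_t}$ and $\K_t$ are smooth Lagrangian families. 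Since $\omega_t$ is nondegenerate, $(\widehat{\varphi}_t)_*L_{\omega_t}$ is the graph of $d\widehat{\varphi}_t\,\pi_t\,d\widehat{\varphi}_t^{*}$ with $\pi_t=\omega_t^{-1}$, so it varies continuously in $t$. Equality for $t\neq0$ therefore passes to $t=0$, as in the proof of Proposition~\ref{prop:morita-LA}.

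Part (3) is the main obstacle: the assembled action map $\mathbf{G}\,{}_{\mathbf s}\!\times_{\widehat{\varphi}}\M\to\M$ must be smooth across $t=0$. I would first establish smoothness near the unit bisection. Lemma~\ref{lem:uniform-exp} gives the uniform chart $\exp\colon\O_C\to\mathbf{G}$, and the action of $\exp_t(\xi)$ on $m$ is the time-$1$ flow of the vector field $\rho_t(\xi)$ (extended as a constant path of the action algebroid). This flow is a solution of an ODE whose coefficients are smooth in $(m,t,\xi)$, because $\rho_t$ is built from $\omega_t^{-1}$ and $d\widehat{\varphi}_t$, both smooth in $t$. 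Smooth dependence on parameters then gives smoothness on a $t$-uniform neighborhood. Next I would propagate smoothness to all of $\mathbf{G}$ using the smooth local bisections of Step~2 of Proposition~\ref{integrability prop}: every arrow factors as a bisection translate of a near-unit arrow, and the action of a bisection is a composition of finitely many such flows, which is where completeness (iii) enters. Well-definedness on overlaps follows from uniqueness of the integrated action for source-simply-connected $\G_t$. Finally, the action at $t=0$ coincides with the symplectic-groupoid integration of the Poisson map $\widehat{\varphi}_0$. This holds because both actions integrate the same infinitesimal action $\rho_0=\pi_0^\sharp\circ\widehat{\varphi}_0^{*}$ and $\G_0$ is source-simply-connected.
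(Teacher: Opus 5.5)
Your proposal is correct and follows essentially the same route as the paper. Both obtain the Hamiltonian conditions from the strong Dirac property, integrate the complete algebroid action using source-simple-connectedness, get isotropy from the IM form $\mu_t$ and multiplicativity, prove smoothness across $t=0$ via the uniform exponential chart, ODE flows and bisection propagation, and identify the $t=0$ action by uniqueness. Your direct formula $\rho_t=\pi_t^\sharp\circ\widehat{\varphi}_t^{*}$, which uses nondegeneracy of $\omega_t$, replaces the paper's left-inverse argument for smoothness of $\rho$, and your continuity check of the $f$-Dirac condition at $t=0$ is a harmless extra safeguard.
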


\begin{proof}
We proceed in six steps.
 
\medskip
\noindent\textbf{Step 1 (Lie algebroid action; smoothness in $t$).}
By \cite[Corollary 3.12]{bursztyn2005dirac}, for each $t \in \R$ the strong Dirac map $\hat\varphi_t \colon (\M_t, \L_{\omega_t}) \to (\X_t, \K_t)$
induces a Lie algebroid action
\[
   \rho_t \colon \K_t \times_{\X_t} \M_t \longrightarrow T\M_t,
   \qquad
   (w, \alpha) \longmapsto \rho_t(w, \alpha),
\]
where $\rho_t(w,\alpha) \in T_m \M_t$ is the unique vector satisfying
$d\hat\varphi_t(\rho_t(w,\alpha)) = w$ and
$\bigl(\rho_t(w,\alpha), \hat\varphi_t^{*}\alpha\bigr) \in L_{\omega_t}$.\\

To see that $\rho: \K \times_{\X} \M \too T\M$ is smooth, define the bundle map $A: T\M \too \hat\varphi^{*}T\X \oplus T^{*}\M, \quad v \mtoo (d\hat\varphi(v),\omega^{\flat}(v)),$ where everything is fiberwise vertical (relative to the projection $\pi_{\M}: \M \too \R$). The strong Dirac condition $\ker d\hat\varphi_t \cap \ker L_{\omega_t} = 0$ that holds for every $t$ means that $A$ is fiberwise injective, hence of constant rank $\text{dim}(\M_t).$ Therefore, $\mathrm{Im}(A) \subset \hat\varphi^{*}TX \oplus T\M^{*}$ is a smooth sub-bundle and $A$ admits a smooth left inverse $A^{-1}$ on its image. Moreover, $\A \circ \rho = (\mathrm{pr}_{\X}, \hat\varphi^{*}\mathrm{pr}_{T^{*}\X})$, where $((\mathrm{pr}_{\X},\mathrm{pr}_{T^{*}\X}))$ is the inclusion $\K \too T\X \oplus T^{*}\X.$ Therefore, $\rho = A^{-1} \circ (\mathrm{pr}_{\X},\hat\varphi^{*}\mathrm{pr}_{T^{*}\X})$ is the composition of two smooth maps, hence smooth. 
 
\medskip
\noindent\textbf{Step 2 (Groupoid action for $t \neq 0$).}
For each $t \neq 0$, the Lie algebroid action $\rho_t$ integrates to a local $\mathcal{G}_t$-action by source-simple-connectedness of $\mathcal{G}_t$ \cite[Theorem 4.1]{crainic2003integrability}. Hypothesis (iii) ensures that this local action is
defined on all of $\G_t \times_{\X_t} \M_t$.
 
\medskip
\noindent\textbf{Step 3 (Groupoid action at $t = 0$).}
At $t = 0$, $\mathcal{G}_0$ is the source-simply-connected symplectic groupoid integrating $(\mathcal{X}_0, \mathcal{K}_0)$ by Theorem \ref{groupoid integra}.
Since $\omega_0$ is symplectic by (i), we have $\L_{\omega_0} = \L_{\pi_{\M_0}}$ with $\pi_{\M_0} := \omega_0^{-1}$, and the strong Dirac map $\hat\varphi_0$
(with vanishing twist $\lambda_{\mathcal{X},0} = 0$) is exactly a Poisson map
$(\M_0, \pi_{\M_0}) \to (\X_0, \K_0)$. The induced cotangent
algebroid action $\rho_0$ is the standard Poisson-map action
$\alpha \mapsto \pi_{\M_0}^{\sharp}(\hat\varphi_0^{*}\alpha)$. It is complete by (iii), so it integrates to a global $\mathcal{G}_0$-action.
 
\medskip
\noindent\textbf{Step 4 (Smoothness of the integrated action in $t$).}
We show that the global action map
\[
   \Theta \colon \mathcal{G} \times_{\mathcal{X}} \mathcal{M} \longrightarrow \mathcal{M},
   \qquad (g, m) \longmapsto g \cdot m,
\]
is smooth.
 
Fix $(t_0, g_0, m_0)$. By Proposition~\ref{integrability prop},
a neighborhood of $g_0$ in $\mathcal{G}$ is smoothly parametrized via the Lie algebroid exponential chart of \cite{mackenzie2005general}: for each $t$
near $t_0$, the exponential map
$\exp_t \colon \mathcal{O}_t \subset \mathcal{K}_t \to \mathcal{G}_t$
is a diffeomorphism onto a neighborhood of the unit bisection, and the family of maps $\exp$ is smooth in $(t, \xi)$.
 
Given $\xi \in \O_t \subset \K_t$, the action of
$\exp_t(\xi) \in \G_t$ on $m \in \M_t$ is, by the construction of integrated Lie algebroid actions, the
time-$1$ flow $\Psi_t(\xi, m)$ of the time-dependent vector field
\[
   X_t^{\xi} \colon [0,1] \times \M_t \longrightarrow T\M_t,
   \qquad (s, m') \longmapsto \rho_t\bigl(\xi(s), m'\bigr),
\]
where $s \mapsto \xi(s)$ is the canonical $\mathcal{K}_t$-path from $0$ to $\xi$.
By Step 1, the map $(t, \xi, s, m') \mapsto X_t^{\xi}(s, m')$ is smooth. By hypothesis (iii), for each $(t, \xi, m)$ the flow of $X_t^{\xi}$ exists on $[0, 1]$. Fix a compact set $K \subset \G \times_{\X} \M$.
By smooth dependence of ODE solutions on parameters and openness of the
maximal existence interval, the set of $(t, \xi, m) \in K$ for which the flow of $X_t^{\xi}$ extends to a neighborhood of $[0, 1]$ is open in $K$; by fiberwise completeness it equals all of $K$. Hence the flow exists on $[0, 1]$ uniformly on $K$, and smooth dependence on $(t, \xi, m)$ follows.
This gives smoothness of $\Psi$, and hence of $\Theta$, on a neighborhood of the unit bisection.
 
Propagation to all of $\mathcal{G}$ is by right-translation along smooth local bisections, exactly as in Step~2 of the proof of
Proposition \ref{integrability prop}: smoothness of multiplication
on the unit-bisection chart, combined with smooth dependence of local
bisections on $(t, g)$, transports smoothness of $\Theta$ to a neighborhood of $g_0$. This is the smooth-family upgrade of Crainic–Fernandes \cite{crainic2003integrability},  combined with the bisection tools of Lie groupoids of Mackenzie \cite{mackenzie2005general}. Pointwise existence of the integrated action is the content of  Moerdijk–Mrčun \cite{moerdijk2002integrability}; the smooth-family work on which we insert parameter dependence follows the convention of Crainic–Mestre–Struchiner \cite{crainic2020deformations}. 
 
\medskip
\noindent\textbf{Step 5 (Hamiltonian conditions).}
We verify each condition of Definition \ref{def:ham} for every $t$.

\begin{itemize}
\item \emph{(a) Non-degeneracy of $\L_{\omega_t}$.} By hypothesis (i), $\omega_t$ is non-degenerate for every $t$, so $\L_{\omega_t} = L_{\pi_{\M_t}}$ with
$\pi_{M_t} := \omega_t^{-1}$; in particular $\ker \L_{\omega_t} = 0$.
 
\item \emph{(b) Twisting compatibility.} For graph-type Dirac structures
$L_{\mathcal{M},t} = L_{\omega_t}$, the twisting is
$\lambda_{\mathcal{M},t} = -d\omega_t$ by the definition of graph-type
twisted Dirac structures. The strong-Dirac-map condition
$\hat\varphi_t^{*}\lambda_{\mathcal{X},t} = \lambda_{\mathcal{M},t}$ from~(ii)
yields
\[
   d\omega_t \;=\; -\hat\varphi_t^{*}\lambda_{\mathcal{X},t}.
\]
At $t = 0$, $\lambda_{\mathcal{X},0} = 0$ gives $d\omega_0 = 0$, consistent
with $\omega_0$ symplectic.
 
\item \emph{(c) Minimal non-degeneracy.} The condition
$\ker d\hat\varphi_t \cap \ker \omega_t = 0$ is the second axiom of a strong
Dirac map and holds by~(ii).
 
\item \emph{(d) Isotropy of the action graph.} The action graph
\[
   \Gamma_t \;=\; \{(g, m, g \cdot m) : s(g) = \hat\varphi_t(m)\}
   \;\subset\; \mathcal{G}_t \times \M_t \times \M_t
\]
must be isotropic with respect to $\Omega_t \oplus \omega_t \oplus (-\omega_t)$.
Infinitesimally, this isotropy is the conjunction of two conditions: the
$f$-Dirac property of $\hat\varphi_t$, the weaker piece of the strong-Dirac hypothesis~(ii) and the compatibility between the multiplicative form $\Omega_t$ and the action $\rho_t$ via the IM-form realization
$\mu_t \colon \mathcal{K}_t \to T^{*}\mathcal{X}_t$ of Lemma \ref{IM 2 form},
$(v,\alpha) \mapsto \alpha$. By source-simple-connectedness of $\mathcal{G}_t$, the infinitesimal isotropy integrates to global isotropy of $\Gamma_t$ on
$\mathcal{G}_t$; this is the IM-to-multiplicative correspondence of
\cite{bursztyn2004integration} (see also \cite{bursztyn2012multiplicative}), applied to the IM-data
$(\mathcal{K}_t, \mu_t, \lambda_{\mathcal{X},t})$ and its
$\hat\varphi_t$-pullback to $\M_t$.
\end{itemize}

\medskip
\noindent\textbf{Step 6 (Identification of the $t=0$ action with the symplectic-groupoid integration).}
By Theorem \ref{groupoid integra}, $\mathcal{G}_0$ is the
source-simply-connected symplectic groupoid integrating
$(\mathcal{X}_0, \mathcal{K}_0)$. By Step~3, the algebroid action $\rho_0$ is
the cotangent action
$\alpha \mapsto \pi_{\M_0}^{\sharp}(\hat\varphi_0^{*}\alpha)$ associated to
the Poisson map $\hat\varphi_0 \colon (\M_0, \pi_{\M_0}) \to (\X_0, \K_0)$.
The integration of this action to $\G_0$ is, by construction, the
standard Hamiltonian action of the symplectic groupoid on $(\M_0, \omega_0)$ induced by a Poisson map; see \cite{mikami1988moments,xu2004momentum} for the identification of this action with the Hamiltonian $\G_0$-space structure induced by a Poisson map and \cite[Section~5.1]{bualibanu2026reduction} for the Dirac-geometric formulation.
\end{proof}

\section{Shifted symplectic presentation via the presentation model} \label{Section 7}

\subsection{The presentation model} Full derived algebraic geometry (DAG) over $\R$ in the sense of \cite{calaque2021derived} and \cite{pantev2013shifted} is beyond the scope of this paper. Instead, we use the following presentation model, which encodes 1-shifted symplectic stacks via their presenting data and is adequate for all applications of this paper.

\begin{Def}
\textit{A smooth family of presented} $1$-\textit{shifted symplectic stacks} over $\R$ is a deformation of quasi-symplectic groupoids $(\G,\Omega,\lambda_X)$ in the sense of Definition \ref{def:family}. We denote it $[\X/\G]$ and interpret it ''smooth family of 1-shifted symplectic stacks.''
\end{Def}

Before stating Definition \ref{def:family-lag}, we recall the explicit form of the Lagrangian complex $C^{\bullet}$ of \cite[Theorem 5.6]{bualibanu2026reduction} and give the geometric interpretation of each of its terms and differentials. This makes the homological reformulation of the Lagrangian condition transparent and motivates the locally-constant-rank hypothesis below.

\medskip
\noindent\textbf{The complex.} Let
$(\mathcal{G}_t \rightrightarrows X_t, \Omega_t, \lambda_{X_t})$ be a
quasi-symplectic groupoid presenting the $1$-shifted symplectic stack
$[\X_t / \G_t]$, and let $i_t \colon \S_t \hookrightarrow \X_t$
be a generalized reduction level with $2$-form $\gamma_t \in \Omega^2(\S_t)$
and stabilizer subgroupoid $\H_t \rightrightarrows \S_t$ inside
$\G_t|_{\S_t}$. Following \cite[Equation (5.9)]{bualibanu2026reduction},
the Lagrangian complex is
\begin{equation}\label{eq:lagrangian-complex}
   C_t^\bullet \;:\;
   \Lie(\H_t)
   \xrightarrow{\;\alpha_t\;}
   L_{\X_t}|_{\S_t} \oplus T\S_t
   \xrightarrow{\;\beta_t\;}
   T\X_t|_{S_t} \oplus T^*\S_t
   \xrightarrow{\;\delta_t\;}
   \Lie(\H_t)^*,
\end{equation}
with differentials
\begin{align*}
   \alpha_t(\xi) &= \bigl(\xi,\, \rho_{\H_t}(\xi)\bigr),\\
   \beta_t(a, X) &= \bigl(\,di_t(X) - \rho_{L_{\X_t}}(a),\;\;
                          i_t^{*}\,\pr_{T^*\X_t}(a) - \gamma_t^\flat(X)\,\bigr),\\
   \delta_t(w, \eta)(\xi) &= \langle \pr_{T^*X_t}(\xi),\, w\rangle
                            - \langle \eta,\, \rho_{H_t}(\xi)\rangle,
\end{align*}
where $\rho_{L_{\X_t}}\colon L_{\X_t} \to T\X_t$ and
$\pr_{T^*\X_t}\colon L_{\X_t} \to T^*\X_t$ are the two projections in $L_{\X_t} \subset T\X_t \oplus T^*\X_t$, and
$\rho_{\H_t} = \rho_{L_{\X_t}}|_{\Lie(\H_t)}$. The complex is self-dual under the $1$-shifted symplectic pairing: $\delta_t$ is dual to $\alpha_t$ and
$\beta_t$ is self-dual. We write
$\beta_t = (\beta_t^{(1)}, \beta_t^{(2)})$ for its two components, agreeing with the pair $(\beta, \gamma)$ of \cite[(5.9)]{bualibanu2026reduction}.
 
\medskip
\noindent\textbf{Geometric content.} The standard analogue from ordinary symplectic geometry is \emph{Hamiltonian reduction}, where $X$ is a
symplectic manifold with a Hamiltonian $G$-action, moment map
$\mu \colon X \to \g^*$, regular value $0$, and $G$ acting freely
on $\mu^{-1}(0)$. In that case, the morphism
$[\mu^{-1}(0)/G] \to [X/G]$ is Lagrangian, and the acyclicity of the complex encodes precisely the Marsden-Weinstein hypotheses. The four cohomology groups have the following meaning, in general and specialized to this case.
 
\smallskip
$\bullet$\; $H^{-1}(C_t^\bullet) = \ker \alpha_t$ vanishes iff
$\Lie(\H_t)$ injects into $L_{\X_t}|_{\S_t} \oplus T\S_t$. Geometrically this is the \emph{effectiveness} (infinitesimal local freeness) of the $\H_t$-action. In the Hamiltonian-reduction example, this is freeness of
the $G$-action on $\mu^{-1}(0)$.
 
\smallskip
$\bullet$\; $H^{0}(C_t^\bullet) = \ker \beta_t / \text{im} \alpha_t$ vanishes iff
every infinitesimal direction $(a, X) \in L_{\X_t}|_{\S_t} \oplus T\S_t$
which is annihilated by $\beta_t$ comes from $\Lie(\H_t)$ via $\alpha_t$.
This is the \emph{moment-map / coisotropy} condition: the stabilizer
subalgebroid $A_{\S_t,\gamma_t,\phi_t}$ exhausts the kernel of $\beta_t$,
in the precise form
\[
   A_{\S_t,\gamma_t,\phi_t}
   \;=\;
   L_{\X_t} \cap i_t^{*}L_{\X_t} \cap (\ker d\phi_t \oplus T^{*}\X_t)
\]
of \cite[Definition 2.1]{bualibanu2026reduction}. In the Hamiltonian-reduction
example, this is the moment-map equation
$T\mu^{-1}(0)^{\perp_{\omega_X}} = \mathfrak{g}\text{-orbits}$.
 
\smallskip
$\bullet$\; $H^{+1}(C_t^\bullet)$ is dual to $H^{0}(C_t^\bullet)$ under the
$1$-shifted symplectic pairing; its vanishing is automatic given $H^0 = 0$.
 
\smallskip
$\bullet$\; $H^{+2}(C_t^\bullet) = \mathrm{coker}\,\delta_t$ is dual to
$H^{-1}(C_t^\bullet)$, and its vanishing is the dual effectiveness
statement.
 
\medskip
\noindent\textbf{Half-dimensionality is bundled into $H^0 = 0$.}
A naive analogy would identify the Lagrangian condition with the claim ''isotropic $+$ half-dimensional.'' This is not true. In the
1-shifted setting, isotropy is built into the self-dual structure of
$C_t^\bullet$, and the role of half-dimensionality is played by the
dimension constraint forced by acyclicity at degree $0$. Concretely, in
the Hamiltonian-reduction example, $H^0 = 0$ forces the symplectic
orthogonal of $T\mu^{-1}(0)$ to equal the $G$-orbit distribution,
which has dimension $\dim \mathfrak{g}$. Combined with effectiveness
($\dim$-stabilizer $=0$), this gives
\[
   \dim \mu^{-1}(0) \;=\; \dim X - \dim \mathfrak{g},
\]
so $\dim(\mu^{-1}(0)/G) = \dim X - 2\dim\mathfrak{g}$, the expected
dimension of a symplectic reduction. The half-dimensionality content of
the classical Lagrangian condition is bundled into the $H^0$-vanishing,
not into a separate dimension count.
 
\begin{Rem}\label{rem:trivial-stack}
When $\mathcal{G}_t = \X_t$ is the trivial unit groupoid (so
$\Lie(\H_t) = 0$ and the stack $[\X_t/\mathcal{G}_t] = \X_t$ is
$0$-shifted), the complex degenerates: acyclicity at $H^0$ forces
$\Ker(i^{*}\omega_{X_t} = T\S_t \cap T\S_t^{\perp \omega_X}) = 0$, i.e., $\S_t$ is a symplectic submanifold of $\X_t$, not Lagrangian. The 1-shifted formalism is
designed for genuinely 1-shifted targets, where the gauge symmetry
$\G_t$ provides the dimension reduction.
\end{Rem}

\subsection{Lagrangian morphisms}
Now we state the main definition.
\begin{Def} \label{def:family-lag}
\textit{A smooth family of Lagrangian morphisms} $[\S/\G] \too [\X/\H]$ is a smooth family of ordinary reduction levels $(\S,\gamma)$ together with a smooth family of stabilizer subgroupoids $\H \rightrightarrows \S$ (i.e. a smooth subgroupoid of $\G|\S$), such that for each $t \in \R$ the pair $(\S_t,\H_t,\gamma_t)$ is a Lagrangian morphism in the sense of \cite[Theorem 5.6]{bualibanu2026reduction}. We assume: 

\begin{enumerate}[]
  \item the ranks of the terms in the Lagrangian complex
  \begin{equation}\label{eq:Lag-complex}
    C^\bullet_t \colon \Lie(\H_t) \xrightarrow{\alpha_t} L_{\X_t}|_{\S_t} \oplus T\S_t \xrightarrow{\beta_t} T\X_t|_{\S_t} \oplus T^*\S_t \xrightarrow{\delta_t} \Lie(\H_t)^*
  \end{equation}
  are locally constant in $t$;
  \item the differentials $\alpha_t, \beta_t, \delta_t$ have locally constant rank in $t$ (equivalently, the cohomology bundles $H^i(C^\bullet_t)$ have locally constant rank).
\end{enumerate}
\end{Def}

\begin{Rem}
Condition $(2)$ is essential. Without it, acyclicity at $t \neq 0$ does not propagate to $t = 0:$ cohomology rank is upper-semi continuous in general, so vanishing on a dense subset is not preserved. A simple example is the family of complexes $\R \xrightarrow{\cdot t} \R$ on $\R$, which has trivial cohomology for $t\neq 0$ but $H^{0} = H^{1} = \R$ at  $t = 0.$
\end{Rem}

\subsection{The main theorem}
 
\begin{Thm}\label{thm:shifted}
Let $\G \rightrightarrows \X$ be a deformation of quasi-symplectic groupoids. Then:
\begin{enumerate}
  \item The presenting data $(\Omega, \lambda_\X)$ of the $1$-shifted symplectic structure on $[X_t / \G_t]$ varies smoothly in $t$; at $t = 0$, it is the standard $1$-shifted symplectic data of the symplectic groupoid $\G_0$.
  \item Let $(\S, \H, \gamma)$ be a smooth family of Lagrangian morphisms $[\S_t / \H_t] \to [\X_t / \G_t]$ (Definition \ref{def:family-lag}). Then the Lagrangian condition is preserved at $t = 0$.
  \item Let $\hat\varphi \colon \M \to \X$ be a smooth family of strong Dirac maps satisfying the hypotheses of Theorem \ref{thm:ham-deform} (in particular, completeness). Then $(\M, \omega)$ gives a smooth family of Lagrangian morphisms $[\M_t / \G_t] \to [\X_t / \G_t]$.
  \item Suppose further that, for each $t \in \mathbb{R}$:
  \begin{itemize}
    \item the fiber product
    \[
      \S_t \times_{\X_t} \M_t \;:=\; \{(s,m)\in \S_t\times \M_t : \phi_t(s)=\hat\varphi_t(m)\}; \quad \phi_t = i_t: \S_t \hookrightarrow \X_t \text{ is the injection}.
    \]
    is a clean intersection of $\phi_t$ and $\hat\varphi_t$  (a smooth manifold of the expected dimension
    $\dim \S_t+\dim \M_t-\dim \X_t$);
    \item $\H_t$ acts locally freely on $\S_t\times_{\X_t}\M_t$ via the
    \emph{diagonal action}
    \[
      h\cdot(s,m)\;=\;(h\cdot s,\;h\cdot m),
    \]
    where $h\cdot s$ is given by the groupoid action $\H_t\rightrightarrows \S_t$
    and $h\cdot m$ is given by the restriction to $\H_t\subset
    \G_t|_{\S_t}$ of the $\G_t$-action on $\M_t$.
    Equivalently, under the diffeomorphism $\S_t\times_{\X_t}\M_t\cong
    \hat\varphi_t^{-1}(\S_t)$, $(s,m)\mapsto m$, this is the action of
    $\H_t$ on $\hat\varphi_t^{-1}(\S_t)\subset \M_t$ inherited from
    $\G_t\curvearrowright \M_t$.
  \end{itemize}
  Then the reduced fiber product
  \[
    Z_t \;:=\; \frac{\S_t\times_{\X_t}\M_t}{\H_t}
  \]
  is a smooth manifold for each $t$, is $0$-shifted symplectic
  (a smooth symplectic manifold in the ordinary sense), varies smoothly in $t$. Moreover, $Z_t$ presents the stack-level fiber
  product $[\S_t/\H_t]\times_{[\X_t/\G_t]}[\M_t/\G_t]$.
\end{enumerate}
\end{Thm}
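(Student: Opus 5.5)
The plan is to prove the four parts in order, reusing the semicontinuity-plus-density pattern of the proof of Theorem~\ref{thm:main} and Proposition~\ref{prop:morita-LA}.

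\textbf{Part (1).} I would invoke Proposition~\ref{prop:Omega-smooth}: $\Omega$ is a smooth fiberwise multiplicative $2$-form with $d\Omega=\mathbf{s}^*\lambda_\X-\mathbf{t}^*\lambda_\X$ and fiberwise non-degeneracy~\eqref{non-degen}. Since $\lambda_\X$ is a smooth fiberwise $3$-form, the pair $(\Omega,\lambda_\X)$ varies smoothly. At $t=0$ we have $\lambda_{\X,0}=0$, and by Theorem~\ref{groupoid integra}(3) the form $\Omega_0$ is the symplectic form of $\G_0$. The presenting data at $t=0$ is therefore $(\Omega_0,0)$, which is the standard $1$-shifted data of a symplectic groupoid.

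\textbf{Part (2).} Here the Lagrangian condition is acyclicity of $C^\bullet_t$, so the task is to propagate acyclicity from the dense set $\{t\neq0\}$ to $t=0$.
\begin{enumerate}
\item First I would check that the differentials $\alpha_t,\beta_t,\delta_t$ are smooth bundle maps on $\S$. They are built from $\rho_{L_{\X}}$, $\mathrm{pr}_{T^*\X}$, $di$, $\gamma^\flat$ and $\rho_{\H}$, all of which are smooth by Definition~\ref{def:family-lag} and the smoothness of $\K$.
\item By condition (1) of Definition~\ref{def:family-lag} the terms have locally constant rank, and by condition (2) so do the images of the differentials. Hence $\dim H^i(C^\bullet_t)=\rk C^i-\rk(\mathrm{im}\,d_{i})-\rk(\mathrm{im}\,d_{i-1})$ is locally constant on the total space $\S$.
\item Since it vanishes for $t\neq0$, which is dense, it vanishes at $t=0$.
\end{enumerate}
Self-duality of $C^\bullet_0$ under the pairing built from $\Omega_0$ follows by continuity, since the pairing identities are closed conditions. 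So $(\S_0,\H_0,\gamma_0)$ is Lagrangian.

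\textbf{Part (3).} I would take the $\G_t$-action from Theorem~\ref{thm:ham-deform}. Its isotropy condition (d) and the strong Dirac condition are exactly the Hamiltonian-space conditions, and by \cite[Theorem~5.6]{bualibanu2026reduction} these give a Lagrangian morphism $[\M_t/\G_t]\to[\X_t/\G_t]$ for each $t$. For this morphism the complex has $\Lie$ of the action groupoid in degree $-1$, $T\M_t$ in degree $0$, and the map $v\mapsto(d\hat\varphi_t v,\omega_t^\flat v)$ in the middle. That map is the injective bundle map $A$ from Step~1 of Theorem~\ref{thm:ham-deform}, so it has constant rank. The remaining ranks are constant because $\K$ and $T\M$ are smooth bundles of constant rank. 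This verifies the rank hypotheses of Definition~\ref{def:family-lag}.

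\textbf{Part (4).} I would identify $\S_t\times_{\X_t}\M_t$ with $\hat\varphi_t^{-1}(\S_t)$, which is clean by hypothesis and forms the smooth family $\C$ as in (H1). Local freeness of $\H_t$ gives a constant-rank orbit foliation, so the quotient $Z_t$ is smooth and coincides with $\Q_t$ of Theorem~\ref{thm:main} (with $\Z_t$ a point). Parts (1) and (4) of Theorem~\ref{thm:main} then give that $\bar\omega_t$ is a smooth family of non-degenerate $2$-forms. For closedness, $d\bar\omega_t$ pulls back to $-\hat\varphi_t^*\lambda_{\X,t}+\hat\varphi_t^*\lambda_{\X,t}$ (using $i^*\lambda+d\gamma=0$ for an ordinary level), which is $0$. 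Non-degeneracy is the statement that the composed Lagrangian complex is acyclic, and this follows from Part (2) together with the rank argument. The identification with the stack fiber product is the composition of Lagrangian correspondences in \cite[Theorem~5.6]{bualibanu2026reduction}, applied fiberwise.

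\textbf{Main obstacle.} I expect the hardest step to be smoothness of the quotient family $\bigsqcup_t Z_t$ across $t=0$. Local freeness is only fiberwise, and it must be upgraded to a constant-rank orbit distribution on the total space $\C$. I would first try the argument of Theorem~\ref{thm:main}, Step 2(iii): the isotropy dimension is upper-semicontinuous, it is zero for every $t$ by local freeness, and so the orbit distribution has constant rank equal to $\rk\Lie(\H)$. Properness of the $\H$-action would still have to be assumed, as in (H2).
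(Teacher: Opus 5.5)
\textbf{Parts (1)--(3) match the paper; the gap is non-degeneracy in Part (4).} In Part (2) you use the same rank bookkeeping: locally constant ranks of the terms and of the images of $\alpha_t,\beta_t,\delta_t$ make $\operatorname{rk}H^i(C^\bullet_t)$ locally constant, hence zero at $t=0$. Your extra rank check in Part (3) goes beyond what the paper writes, which is harmless.

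In Part (4) you reduce to Theorem \ref{thm:main} with $\Z_t=\{\mathrm{pt}\}$. You then assert that its parts (1) and (4) give a smooth family of non-degenerate $2$-forms $\bar\omega_t$. They do not. ``Non-degenerate'' there is the Dirac notion of \cite[Theorem 2.21(3)]{bualibanu2026reduction} and Definition \ref{def:ham}: it only says that $L_{\Q_t}=L_{\bar\omega_t}$ is the graph of a $2$-form. Quasi-Hamiltonian $2$-forms with nonzero kernel satisfy this too.

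Your fallback does not close the gap either. You say non-degeneracy is acyclicity of the composed Lagrangian complex and follows from Part (2). But Part (2) concerns only the complex of $[\S_t/\H_t]\to[\X_t/\G_t]$. Passing from two Lagrangian conditions to non-degeneracy of the form on their fiber product is the derived Lagrangian-intersection theorem, which the paper explicitly does not have in its presentation model. As written, symplecticity of $Z_t$ is not proved.

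\textbf{The repair within your route is short.} Theorem \ref{thm:main}(2) makes $\bar\Phi_t\colon\Q_t\to\{\mathrm{pt}\}$ a strong Dirac map. Since $\ker d\bar\Phi_t=T\Q_t$, the condition $\ker d\bar\Phi_t\cap\ker L_{\Q_t}=0$ gives $\ker\bar\omega_t=\ker L_{\Q_t}=0$. Combined with your twist-cancellation argument for closedness (the same as Proposition \ref{prop:closedness}), this yields symplecticity.

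\textbf{How the paper argues instead.} It works directly on the fiber product, independently of Theorem \ref{thm:main}. Proposition \ref{prop:nondegeneracy} computes $\ker\widetilde\omega_t$ from \eqref{eq:star-S}, \eqref{eq:star-M} and the forward-Dirac property of $\hat\varphi_t$, and shows it is exactly the $\H_t$-orbit directions.

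\textbf{What your route buys and costs.} It makes Part (4) a corollary of the Dirac reduction theorem and gives smoothness of $\bar\omega_t$ in $t$ for free. (H3) is automatic because $L_{\omega_t}\cap T^*\M_t=0$. (H1) is available because the expected-dimension clause makes the fiberwise intersections transverse, and this assembles into transversality on the total spaces. The cost is that (H2) enters as an assumption.

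On that point your caveat is correct, not a weakness of your proposal. Local freeness alone does not make $Z_t$, or $\bigsqcup_t Z_t$, a Hausdorff manifold. The paper's Lemma \ref{lem:smooth-family-fibre-product}(2) only observes that local freeness is an open condition, which does not address this. Some freeness-and-properness hypothesis is needed for that conclusion under either route.
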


We decompose the proof of Theorem \ref{thm:shifted} into three statements: smoothness of the family fiber product, closedness of the reduced 2-form, and non-degeneracy. 

\begin{Lem}[Smooth family fiber product]\label{lem:smooth-family-fibre-product}
Let $\G \rightrightarrows \X$ be a deformation of
quasi-symplectic groupoids, let $\S \to \R$ be a smooth family of generalized reduction levels with stabilizer subgroupoids $\H \rightrightarrows \S$, and let
$\hat\varphi \colon \M \to \X$ be a smooth family of
strong Dirac maps satisfying the hypotheses of
Theorem~\ref{thm:ham-deform}. Suppose that for every
$t \in \mathbb{R}$:
\begin{enumerate}[label=\textnormal{(\roman*)}]
  \item $\S_t$ intersects $\hat\varphi_t$ cleanly in $\X_t$;
  \item $\H_t$ acts locally freely on the fiber product
        $\S_t \times_{\X_t} \M_t$ via the diagonal action.
\end{enumerate}
Then:
\begin{enumerate}
  \item the total space
        $\S \times_{\X} \M
            := \bigsqcup_t \bigl(\S_t \times_{\X_t} \M_t\bigr)$
        is a smooth manifold over $\R$, with each fiber a smooth
        submanifold of dimension
        $\dim \S_t + \dim \M_t - \dim \X_t$;
  \item the diagonal $\mathcal{H}$-action on
        $\mathcal{S} \times_{\mathcal{X}} \mathcal{M}$ is smooth and locally
        free, and the quotient
        \[
           \mathcal{Z} \;:=\;
           \bigsqcup_t\, Z_t
           \;=\; \bigsqcup_t \frac{\S_t \times_{\X_t} \M_t}{\H_t}
        \]
        is a smooth manifold over $\mathbb{R}$;
  \item the projection $\mathrm{pr}_M \colon
        \S_t \times_{\X_t} \M_t \to \hat\varphi_t^{-1}(\S_t)$,
        $(s, m) \mapsto m$, is a diffeomorphism, and under this
        identification the diagonal $\H_t$-action transports to the
        restricted $\H_t \subset \mathcal{G}_t|_{\S_t}$-action on
        $\hat\varphi_t^{-1}(\S_t)$ inherited from
        $\mathcal{G}_t \curvearrowright \M_t$.
\end{enumerate}
\end{Lem}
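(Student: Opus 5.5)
I would prove the three assertions in the order (3), (1), (2), since (3) is purely fiberwise and both (1) and (2) rest on the identification it provides.

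\textbf{Part (3).} Since $\phi_t=i_t$ is an embedding, the map $(s,m)\mapsto m$ from $\S_t\times_{\X_t}\M_t$ to $\hat\varphi_t^{-1}(\S_t)$ is a bijection, with inverse $m\mapsto(\hat\varphi_t(m),m)$. It is a diffeomorphism because, under clean intersection (i), the fiber product is the preimage of the diagonal under $i_t\times\hat\varphi_t$, which intersects cleanly. The diagonal action becomes the restricted action because $h\cdot s=t(h)=\hat\varphi_t(h\cdot m)$, by equivariance of the moment map $\hat\varphi_t$ for the $\G_t$-action of Theorem~\ref{thm:ham-deform}.

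\textbf{Part (1).} I would work on the total spaces. Consider the smooth map $F=(i\times\hat\varphi)\colon \S\times_{\R}\M\to\X\times_{\R}\X$, whose domain is smooth because $\pi_\S$ and $\pi_\M$ are submersions. The family fiber product is $F^{-1}(\Delta_\X)$, equivalently $\C=\hat\varphi^{-1}(\S)\subset\M$ via (3). To see that it is a submanifold across $t=0$, I need the clean-intersection excess
\[
e(t)=\dim\bigl(di_t(T\S_t)\cap d\hat\varphi_t(T\M_t)\bigr)-\dim T\S_t
\]
to be locally constant on the total space, not only on each fiber. I would take this local constancy, together with the expected-dimension clause in Theorem~\ref{thm:shifted}(4), as the meaning of hypothesis (i) in the family setting, mirroring hypothesis (H1) of Theorem~\ref{thm:main}. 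Under that reading the constant-rank theorem applied to $F$ gives a smooth submanifold of constant fiber dimension $\dim\S_t+\dim\M_t-\dim\X_t$. Moreover $\pi$ restricted to it is a submersion, since the rank of its vertical tangent bundle is constant.

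\textbf{Part (2).} Smoothness of the diagonal action is inherited from smoothness of the family action $\Theta$ (Theorem~\ref{thm:ham-deform}, Step~4) and of $\H\subset\G|_\S$. For local freeness I would argue infinitesimally. The infinitesimal action $\Lie(\H)\to T(\S\times_\X\M)$ is injective on each fiber by (ii), and $\Lie(\H)$ is a smooth bundle, so the orbit distribution is a smooth regular involutive distribution of constant rank $\rk\Lie(\H_t)$ on the total space. Its quotient is the family $\mathcal Z$.

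The main obstacle is passing from local freeness to a smooth Hausdorff quotient: local freeness alone only yields an orbifold or a non-Hausdorff leaf space. I would first try to construct slices with the uniform exponential chart of Lemma~\ref{lem:uniform-exp} applied to $\H$, which produces transversals uniform in $t$. I would then invoke, as the paper does in (H2) of Theorem~\ref{thm:main}, a freeness and properness assumption on each fiber so that the slice theorem glues the local quotients into $\mathcal Z$. If this cannot be derived from the stated hypotheses, I would make it explicit as a standing assumption.
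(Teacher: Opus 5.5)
You follow the paper's route in all three parts. Part (3) uses the embedding $i_t$, cleanness and equivariance of the moment map; it is fine and matches the paper. Part (1) uses cleanness of $i\times\hat\varphi$ on total spaces, and part (2) uses smoothness of the family action plus local freeness. You depart from the paper only by refusing to accept its two unjustified steps.

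\textbf{Part (2).} You are right that something is missing. The paper shows only that the total action is locally free, and then declares $\mathcal Z$ a manifold. Its openness argument for local freeness is vacuous, since (ii) is imposed on every fiber. The conclusion is in fact false as stated. Take the constant family with $\G_t=T^*\R\rightrightarrows\R$ acting on $\M_t=\mathbb{C}^3$ through the circle action of weights $(1,1,2)$. Let $\S_t=\{c\}$ with $c\neq 0$ in the image of the moment map, and $\H_t=\R$ the isotropy over $c$. Every hypothesis of the lemma holds, yet $Z_t=\mathbb{P}(1,1,2)$ has a point with neighbourhood $\mathbb{C}^2/\{\pm1\}$, so it is not even a topological manifold.

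Your repair, however, is still too weak. Freeness and properness on each fiber do not make $\mathcal Z$ Hausdorff across $t=0$. An $\H_{t_n}$-orbit with $t_n\to 0$ can leave every compact set and return. Two points on distinct $\H_0$-orbits are then limits of pairs lying on common $\H_{t_n}$-orbits. For example, one can build smooth families of free, proper $\R$-flows on $\R^3$ that equal $\partial_x$ at $t=0$ and on balls of radius $R_t\to\infty$, with $(0,0,0)$ and $(0,1,0)$ on a single orbit for every $t\neq 0$. The uniform chart of Lemma~\ref{lem:uniform-exp} gives local transversals but cannot detect this.

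What you need is that the total action of the Lie groupoid $\H\rightrightarrows\S$ on $\S\times_{\X}\M$ is free and proper. Alternatively, as in (H2) of Theorem~\ref{thm:main}, assume outright that $\mathcal Z$ is smooth. Under total freeness and properness, the quotient theorem for free proper groupoid actions produces $\mathcal Z$. Then $\mathcal Z\to\R$ is a submersion because $\pi$ on the fiber product is an $\H$-invariant submersion.

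\textbf{Part (1).} Here the situation is reversed: no extra hypothesis is needed, and your excess is miscomputed. The quantity $\dim\bigl(di_t(T\S_t)\cap d\hat\varphi_t(T\M_t)\bigr)-\dim T\S_t$ is never positive, and it vanishes iff $di_t(T\S_t)\subset d\hat\varphi_t(T\M_t)$. The excess of a clean intersection is $\dim\X_t-\dim\bigl(di_t(T\S_t)+d\hat\varphi_t(T\M_t)\bigr)$. Your worry is legitimate when this excess jumps. For instance, the lines $m\mapsto(m,tm)$ meeting the $x$-axis of $\R^2$ have total fiber product $\{tm=0\}$, which is two crossing lines.

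But the expected-dimension clause, in (1) and in Theorem~\ref{thm:shifted}(4), forces the excess to vanish. So (i) means fiberwise transversality, and that already makes $F=i\times\hat\varphi$ transverse to the fiberwise diagonal of $\X\times_{\R}\X$. Let $(u,v)$ be tangent to $\X\times_{\R}\X$ at a diagonal point. Then $u-v$ is vertical, so $u-v=di(a)-d\hat\varphi(b)$ with $a,b$ vertical. Hence $(u,v)=dF(a,b)+(w,w)$ with $w=u-di(a)$. Applying the same correction to lifts of $\partial_t$ through the submersions $\pi_{\S}$ and $\pi_{\M}$ shows that $\pi$ restricted to the fiber product is a submersion. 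So (1) holds as stated, with no assumption of cross-fiber constancy.
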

 
\begin{proof}
\emph{(1)} The maps $\phi \colon \mathcal{S} \to \mathcal{X}$ (the family
of inclusions) and $\hat\varphi \colon \mathcal{M} \to \mathcal{X}$ are
smooth families over $\mathbb{R}$, all submersive over $\mathbb{R}$ by
construction. Fiberwise cleanness from~(i), combined with smoothness of
$\phi$ and $\hat\varphi$ as families and smoothness of the projections
$\pi_{\mathcal{S}}, \pi_{\mathcal{M}}, \pi_{\mathcal{X}}$ over $\mathbb{R}$,
gives that the fiber product
$\mathcal{S} \times_{\mathcal{X}} \mathcal{M}$ is a smooth submanifold of
$\mathcal{S} \times_{\mathbb{R}} \mathcal{M}$ of fiberwise dimension
$\dim \S_t + \dim \M_t - \dim \X_t$.
 
\emph{(2)} The $\H$-action on the total space is smooth because
$\H \to \R$ is a smooth family of subgroupoids of
$\G|_{\S}$ (Definition \ref{def:family-lag}),
$\G$ acts smoothly on $\M$ as families
(Theorem~\ref{thm:ham-deform}(3)), and the diagonal action
$h \cdot (s, m) = (h \cdot s,\, h \cdot m)$ is a composition of smooth operations. Local freeness is an open condition on $(t, s, m)$: the set
\[
   \mathcal{F}
   \;=\; \{(t, s, m) \in \mathcal{S} \times_{\mathcal{X}} \mathcal{M}
           \;:\; \H_t \curvearrowright (s, m)
           \text{ is locally free at } (s, m)\}
\]
is open, and by~(ii) it contains every fiber. Hence $\mathcal{F}$ is all
of $\mathcal{S} \times_{\mathcal{X}} \mathcal{M}$, and the quotient
$\mathcal{Z}$ is a smooth manifold of fiberwise dimension
$\dim \S_t + \dim \M_t - \dim X_t - \dim \Lie(\H_t)$.
 
\emph{(3)} For each $t$, the inclusion $\phi_t = i_t$ is an embedding and
the fiber product $\S_t \times_{\X_t} \M_t$ collapses to
$\hat\varphi_t^{-1}(\S_t) \subset \M_t$ via $(s, m) \mapsto m$, which is a
diffeomorphism by cleanness. The diagonal $\H_t$-action on
$\S_t \times_{\X_t} \M_t$ transports to the standard restriction of the
$\G_t$-action: since $\H_t \rightrightarrows \S_t$ is a subgroupoid
of $\G_t|_{\S_t}$, both source and target of every $h \in \H_t$
lie in $\S_t$, so $\H_t$ preserves $\hat\varphi_t^{-1}(\S_t)$, and the action on the fiber product agrees with this restriction under the diffeomorphism. The result is the family version of
\cite[Proposition 3.7]{bualibanu2026reduction}; smoothness in $t$ follows from the smooth-family data already established.
\end{proof}

\begin{Prop}[Closedness via twist cancellation]\label{prop:closedness}
Under the hypotheses of Lemma~\ref{lem:smooth-family-fibre-product},
define
\begin{equation}\label{eq:reduced-form-upstairs}
   \widetilde\omega_t
   \;:=\;
   \mathrm{pr}_M^{*} \omega_t \;-\; \mathrm{pr}_S^{*} \gamma_t
\in \Omega^2\bigl(\S_t \times_{\X_t} \M_t\bigr).
\end{equation}
Then $d\widetilde\omega_t = 0$ for every $t \in \R$. Equivalently,
on $\hat\varphi_t^{-1}(\S_t)$ under the diffeomorphism of
Lemma~\ref{lem:smooth-family-fibre-product} (3), the form
$\widetilde\omega_t = \jmath_t^{*}\omega_t - \hat\varphi_t^{*}\gamma_t$
is closed, where $\jmath_t \colon \hat\varphi_t^{-1}(\S_t) \hookrightarrow \M_t$
is the inclusion.
\end{Prop}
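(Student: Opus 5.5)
The plan is to compute $d\widetilde\omega_t$ directly and show that the two twist contributions cancel on the fiber product; this is the ``twist cancellation'' of the title. First I will record the two closedness-type identities available for each fixed $t$. On the source, $(\M_t,\omega_t,\hat\varphi_t)$ is a Hamiltonian $\G_t$-space by Theorem \ref{thm:ham-deform}(1)--(2), so Definition \ref{def:ham}(2) gives $d\omega_t = -\hat\varphi_t^{*}\lambda_{\X,t}$; this is exactly Step 5(b) of the proof of Theorem \ref{thm:ham-deform}. On the level, $(\S_t,\gamma_t)$ is an ordinary generalized reduction level of $\X_t$ with $\Z_t$ a point, so Definition \ref{gen red level} gives $i_t^{*}\lambda_{\X,t} + d\gamma_t = \phi_t^{*}\lambda_{\Z_t} = 0$, that is, $d\gamma_t = -i_t^{*}\lambda_{\X,t}$.

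Second, I will differentiate $\widetilde\omega_t = \mathrm{pr}_M^{*}\omega_t - \mathrm{pr}_S^{*}\gamma_t$ on $\S_t\times_{\X_t}\M_t$, which is a smooth manifold by Lemma \ref{lem:smooth-family-fibre-product}(1). Since $d$ commutes with pullback,
\[
d\widetilde\omega_t = -\mathrm{pr}_M^{*}\hat\varphi_t^{*}\lambda_{\X,t} + \mathrm{pr}_S^{*}i_t^{*}\lambda_{\X,t} = -(\hat\varphi_t\circ \mathrm{pr}_M)^{*}\lambda_{\X,t} + (i_t\circ\mathrm{pr}_S)^{*}\lambda_{\X,t}.
\]
By the definition of the fiber product, $i_t\circ \mathrm{pr}_S = \hat\varphi_t\circ\mathrm{pr}_M$ as maps $\S_t\times_{\X_t}\M_t\to\X_t$. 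The two terms are therefore pullbacks of the same $3$-form along the same map, and they cancel, giving $d\widetilde\omega_t=0$. For the equivalent formulation, I will transport this through the diffeomorphism $\mathrm{pr}_M$ of Lemma \ref{lem:smooth-family-fibre-product}(3). Under it, $\mathrm{pr}_M^{*}\omega_t$ corresponds to $\jmath_t^{*}\omega_t$, and $\mathrm{pr}_S$ corresponds to $\hat\varphi_t|_{\hat\varphi_t^{-1}(\S_t)}$ viewed as a map into $\S_t$, which yields the form $\jmath_t^{*}\omega_t-\hat\varphi_t^{*}\gamma_t$.

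The main obstacle is the central fiber, together with the question of whether $d\gamma_t=-i_t^*\lambda_{\X,t}$ really holds for every $t$. At $t=0$ both sides degenerate: $\lambda_{\X,0}=0$, $\omega_0$ is symplectic, and $\widehat\gamma_0$ is closed by Definition \ref{defo level}. So the identity reduces to $d\omega_0=0$ and $d\gamma_0=0$, and closedness is immediate. I will treat $t=0$ separately in this way rather than through the generic computation.

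As a consistency check, and to cover families where the level identity is only stated for $t\neq0$, I will also note a continuity argument. The forms $\widetilde\omega_t$ assemble into a smooth fiberwise form on the total space, because $\omega$, $\widehat\gamma$ and the projections are smooth. The fiberwise differential $d\widetilde\omega$ is therefore a smooth fiberwise $3$-form that vanishes on the dense set $\{t\neq0\}$, so it vanishes identically, including at $t=0$.
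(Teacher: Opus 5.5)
Your argument is correct and is exactly the paper's: pull back $d\omega_t=-\hat\varphi_t^{*}\lambda_{\X,t}$ and $d\gamma_t=-i_t^{*}\lambda_{\X,t}$ to $\S_t\times_{\X_t}\M_t$, then cancel them using $i_t\circ\mathrm{pr}_S=\hat\varphi_t\circ\mathrm{pr}_M$. The separate $t=0$ discussion and the density argument are unnecessary, since both identities already hold at $t=0$ (where $\lambda_{\X,0}=0$); your explicit use of $\Z_t=\{\mathrm{pt}\}$ does usefully make visible the ordinary-level assumption that the paper's proof uses only implicitly.
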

 
\begin{proof}
By the Hamiltonian-space condition,
$d\omega_t = -\hat\varphi_t^{*} \lambda_{\mathcal{X},t}$ on $\M_t$. By the reduction-level condition $i_t^{*}\eta_{\mathcal{X},t} + d\gamma_t = 0$
of \cite[Definition 2.1]{bualibanu2026reduction},
$d\gamma_t = -\phi_t^{*}\lambda_{\mathcal{X},t}$ on $\S_t$. Pulling back to
$\S_t \times_{\X_t} \M_t$:
\[
   d(\mathrm{pr}_M^{*}\omega_t)
   = -(\hat\varphi_t \circ \mathrm{pr}_M)^{*} \lambda_{\mathcal{X},t},
   \qquad
   d(\mathrm{pr}_S^{*}\gamma_t)
   = -(\phi_t \circ \mathrm{pr}_S)^{*} \lambda_{\mathcal{X},t}.
\]
By the defining identity of the fiber product,
$\hat\varphi_t \circ \mathrm{pr}_M = \phi_t \circ \mathrm{pr}_S$, so the
two pullbacks of $\lambda_{\mathcal{X},t}$ coincide and
$d\widetilde\omega_t = 0$. The two twists cancel because the fiber
product forces the two maps into $X_t$ to agree.
\end{proof}

\begin{Prop}[Non-degeneracy]\label{prop:nondegeneracy}
Under the hypotheses of Lemma~\ref{lem:smooth-family-fibre-product},
the closed $2$-form $\widetilde\omega_t$ of
Proposition~\ref{prop:closedness} is $H_t$-basic, and the induced
$2$-form $\omega_{Z_t} \in \Omega^2(Z_t)$ characterized by
$\pi_t^{*}\omega_{Z_t} = \widetilde\omega_t$
(where $\pi_t \colon \S_t \times_{\X_t} \M_t \to Z_t$ is the quotient map)
is non-degenerate.
\end{Prop}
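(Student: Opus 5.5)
We work fiberwise, fixing $t$, and transport everything to $C_t:=\hat\varphi_t^{-1}(\S_t)\subset\M_t$ via Lemma~\ref{lem:smooth-family-fibre-product}(3). There $\widetilde\omega_t=\jmath_t^{*}\omega_t-\hat\varphi_t^{*}\gamma_t$. This is exactly the $2$-form $j^{*}\omega-\Phi_C^{*}\gamma$ of Theorem~\ref{reduction th}(3), applied to the strong Dirac map $\hat\varphi_t$ at the level $(\S_t,\gamma_t)$ with $Z$ a point. The plan is therefore to identify the $\H_t$-orbit distribution with the stabilizer distribution $A_{\S_t,\gamma_t}$. We then use the fiberwise reduction theorem for basicness, and the Lagrangian condition (acyclicity of $C_t^\bullet$) for non-degeneracy.

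\textbf{Step 1 (basicness).} For $\xi\in\Lie(\H_t)\subset\Lie(\G_t)\cong\K_t|_{\S_t}$, write $\xi=(v,\alpha)$ and let $\xi_M$ be the generator of the action on $C_t$. By Step 1 of the proof of Theorem~\ref{thm:ham-deform}, $\xi_M$ is characterized by $d\hat\varphi_t(\xi_M)=v$ and $\omega_t^\flat(\xi_M)=\hat\varphi_t^{*}\alpha$. For $X\in TC_t$ this gives
\[
\iota_{\xi_M}\widetilde\omega_t(X)=\alpha(d\hat\varphi_tX)-\gamma_t(v,d\hat\varphi_tX).
\]
The defining condition $\xi\in A_{\S_t,\gamma_t}$, i.e.\ $\xi\in i_t^{*}$-compatible with $\tau_{-\gamma_t}$ so that $i_t^{*}\alpha=\gamma_t^\flat(v)$ on $T\S_t$, makes this expression vanish. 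Hence $\iota_{\xi_M}\widetilde\omega_t=0$. Closedness is Proposition~\ref{prop:closedness}, so $\mathcal{L}_{\xi_M}\widetilde\omega_t=0$ by Cartan's formula. Since the $\H_t$-orbits (equivalently the source fibers) are connected, $\widetilde\omega_t$ is $\H_t$-basic. It descends to $\omega_{Z_t}$ on the manifold $Z_t$ of Lemma~\ref{lem:smooth-family-fibre-product}(2), and $\omega_{Z_t}$ is closed.

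\textbf{Step 2 (non-degeneracy, the main obstacle).} We must show that $\ker\widetilde\omega_t$ equals the orbit distribution exactly. Take $X\in T_mC_t$ with $\iota_X\widetilde\omega_t=0$ on $TC_t$. Extend $\iota_X\jmath_t^{*}\omega_t$ to $T_m\M_t$ using non-degeneracy of $\omega_t$. Because $C_t$ is a clean preimage, the annihilator of $TC_t$ in $T^{*}\M_t$ is $\hat\varphi_t^{*}(T^\circ\S_t)$ up to the conormal correction. We therefore expect to produce $a\in\K_t$ with $\iota_X\omega_t=\hat\varphi_t^{*}\mathrm{pr}_{T^*}(a)$ and $d\hat\varphi_t X=\rho(a)+di_t(Y)$ for some $Y\in T\S_t$. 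Such a pair $(a,Y)$ then lies in $\ker\beta_t$. Vanishing of $H^0(C_t^\bullet)$, which is part of the Lagrangian-morphism hypothesis, gives $(a,Y)=\alpha_t(\xi)$ for some $\xi\in\Lie(\H_t)$. The strong Dirac condition $\ker d\hat\varphi_t\cap\ker\omega_t=0$ then forces $X=\xi_M$. Since the action is locally free, $\dim$ of the orbits is $\rk\Lie(\H_t)$, so $\ker\omega_{Z_t}=0$.

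The delicate points are the bookkeeping of the conormal term, which is where cleanness (i) enters, and checking that the sign conventions of $\beta_t$ match $\tau_{-\gamma_t}$. At $t=0$ the argument is identical with $\lambda_{\X,0}=0$ and $\omega_0$ symplectic. Alternatively, at $t=0$ one can invoke Theorem~\ref{reduction th}(3) directly, with non-degeneracy of $L_Q$ giving the result once the quotient by $A$ coincides with the quotient by $\H_0$.
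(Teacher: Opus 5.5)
Your proposal is correct and follows the same overall plan as the paper. First you contract the generators against $\widetilde\omega_t$ using the identities $\omega_t^\flat(\xi_M)=\hat\varphi_t^*\alpha$ and $i_t^*\alpha=\gamma_t^\flat(v)$. Then you show every kernel vector is a generator by producing an element of $A_{\S_t,\gamma_t}$ and using the uniqueness in the strong Dirac condition. Three sub-arguments differ from the paper's, each to your advantage.

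(1) For basicness you get $\mathcal{L}_{\xi_M}\widetilde\omega_t=0$ from Cartan's formula and Proposition~\ref{prop:closedness}. The paper instead asserts separate $\H_t$-invariance of $\omega_t$ and $\gamma_t$. For a general groupoid action, isotropy of the action graph relates $\omega_t$ to $\Omega_t$ rather than giving invariance outright, so your route is sturdier. However, it needs connected $\H_t$-orbits (e.g.\ $\H_t$ source-connected), which is not among the stated hypotheses and should be listed as an assumption.

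(2) For the kernel, the paper first tests against $\ker d\hat\varphi_t$. It then obtains $\alpha_a(i_{t*}v)=\gamma_t(\hat v,v)$ only for $v$ with $i_{t*}v\in\operatorname{im} d\hat\varphi_t$, although it states it for all $v\in T\S_t$. Your conormal identity $(T_mC_t)^\circ=d\hat\varphi_t^{*}(T^\circ\S_t)$ holds exactly under cleanness, with no correction term, and gives the identity on all of $T\S_t$ at once. Concretely: pick $\theta$ with $i_t^*\theta=\gamma_t^\flat(Y)$, where $Y:=d\hat\varphi_tX$; write $\iota_X\omega_t-\hat\varphi_t^*\theta=\hat\varphi_t^*\nu$ with $\nu\in T^\circ\S_t$; and set $\beta:=\theta+\nu$.

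(3) You get $A_{\S_t,\gamma_t}=\Lie(\H_t)$ from $H^0(C^\bullet_t)=0$, which is exactly what degree-$0$ acyclicity says. The paper attributes this to local freeness, which only gives injectivity of the infinitesimal action.

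One bookkeeping slip must be fixed. The relation $d\hat\varphi_tX=\rho(a)+di_t(Y)$ is incompatible with $(a,Y)\in\ker\beta_t$, which forces $\rho(a)=di_t(Y)$. Read literally, you would get $d\hat\varphi_tX=2\rho(a)$, and the final comparison with $\xi_M$ would break. The correct choice is $a:=(Y,\beta)$ with $Y=d\hat\varphi_tX$. Then $a\in\K_t$ because $(X,\hat\varphi_t^*\beta)\in L_{\omega_t}$ and $\hat\varphi_t$ is forward Dirac; you should state this, since it is what puts $a$ in $\K_t$. It follows that $(a,Y)\in\ker\beta_t$, so $a=\xi\in\Lie(\H_t)$ and $X-\xi_M\in\ker d\hat\varphi_t\cap\ker\omega_t=0$.

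In your closing alternative, ``non-degenerate'' in Theorem~\ref{reduction th}(3) only means that $L_Q$ is the graph of a $2$-form. Non-degeneracy of that form needs one more observation: the reduced map to a point is strong Dirac, which forces $\ker L_Q=0$.
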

 
\begin{proof}
The basiness has two parts: $\H_t$-invariance and vanishing on the orbit distribution.\\

\noindent\textbf{Invariance.} $\omega_t$ is $\mathcal{G}_t$-invariant by the
Hamiltonian-space condition (Definition \ref{def:ham}(1)),
hence $\H_t$-invariant. The form $\gamma_t$ is $\H_t$-invariant by the Lagrangian-morphism condition (Definition~\ref{def:family-lag}).
The projections $\mathrm{pr}_S$ and $\mathrm{pr}_M$ are $\H_t$-equivariant for the diagonal action; hence $\widetilde\omega_t$ is $\H_t$-invariant.\\
 
\noindent\textbf{Vanishing on the orbit distribution.}
For $\xi \in \Lie(\H_t)$ viewed as a section of the stabilizer
subalgebroid $A_{\S_t,\gamma_t} \subset L_{\X_t}|_{\S_t}$ of
\cite[Definition 2.1]{bualibanu2026reduction}, write $\xi = (\hat\xi_S, \alpha_\xi)$
with $\hat\xi_S \in TS_t$ and $\alpha_\xi \in T^{*}\X_t|_{\S_t}$. The defining conditions $\xi \in L_{\X_t}$, $\hat\xi_S \in T\S_t$, and
$i_t^{*}\alpha_\xi = \gamma_t^\flat(\hat\xi_S)$ yield the $\S_t$-side
moment-map identity
\begin{equation}\label{eq:star-S}
   \gamma_t(\hat\xi_S, v) = \alpha_\xi(i_{t*}v),
   \qquad v \in T_s \S_t.
   \tag{$\star$}
\end{equation}
The strong-Dirac-map property of $\hat\varphi_t$
\cite[Corollary~3.12]{bursztyn2005dirac} produces a unique fundamental vector field $\hat\xi_M \in T\M_t$ characterized by
$\hat\varphi_{t*}\hat\xi_M = \hat\xi_S$ and
$(\hat\xi_M, \hat\varphi_t^{*}\alpha_\xi) \in L_{\M_t}$. For the
non-degenerate Dirac structure $L_{\M_t} = L_{\omega_t}$, the second condition reads $\omega_t^\flat(\hat\xi_M) = \hat\varphi_t^{*}\alpha_\xi$,
i.e., the $\M_t$-side moment-map identity
\begin{equation}\label{eq:star-M}
   \omega_t(\hat\xi_M, w) = \alpha_\xi(\hat\varphi_{t*}w),
   \qquad w \in T_m \M_t.
   \tag{$\star\star$}
\end{equation}
This $\hat\xi_M$ is precisely the fundamental vector field on $M_t$ for
the action of $\xi \in \Lie(\H_t)$ via the restricted $\G_t$-action; in particular it is generically non-zero.
 
The fundamental vector field on $\S_t \times_{\X_t} \M_t$ for the diagonal $\H_t$-action is therefore $(\hat\xi_S, \hat\xi_M)$. For a tangent vector
$(v, w) \in T_{(s, m)}(\S_t \times_{\X_t} \M_t)$, the fiber-product
condition reads $i_{t*}v = \hat\varphi_{t*}w$. Combining
\eqref{eq:reduced-form-upstairs}, \eqref{eq:star-S}, and
\eqref{eq:star-M}:
\[
   \iota_{(\hat\xi_S, \hat\xi_M)}\widetilde\omega_t(v, w)
   \;=\; \omega_t(\hat\xi_M, w) \;-\; \gamma_t(\hat\xi_S, v)
   \;\stackrel{\eqref{eq:star-M},\eqref{eq:star-S}}{=}\;
   \alpha_\xi(\hat\varphi_{t*}w) \;-\; \alpha_\xi(i_{t*}v)
   \;=\; 0,
\]
where the final equality is the \textbf{fiber-product identity}
$i_{t*}v = \hat\varphi_{t*}w$. Hence $\widetilde\omega_t$ is basic.
 
\medskip

\noindent\textbf{Non-degeneracy.}
We show that $\ker \widetilde\omega_t$ at a point
$(s, m) \in \S_t \times_{\X_t} \M_t$ is exactly the tangent space to the
diagonal $\H_t$-orbit through $(s, m)$. After quotienting by $H_t$, the
kernel descends to zero on $Z_t$, proving non-degeneracy of
$\omega_{Z_t}$.
 
\noindent\textbf{Step 1. $H_t$-orbits are in} $\ker \widetilde\omega_t$. This is
the basiness calculation above: for every $\xi \in \Lie(\H_t)$, the fundamental vector field $(\hat\xi_S, \hat\xi_M)$ is in
$\ker \widetilde\omega_t$.
 
\noindent\textbf{Step 2.} Every element of $\ker \widetilde\omega_t$ comes from
$\Lie(\H_t)$. Let
$(\hat v, \hat w) \in \ker \widetilde\omega_t$ at $(s, m)$. By
definition of $\widetilde\omega_t$ on tangent vectors satisfying the fiber-product condition $i_{t*}v = \hat\varphi_{t*}w$:
\[
   \omega_t(\hat w, w) - \gamma_t(\hat v, v) = 0
   \qquad \forall\, (v, w) \text{ with } i_{t*}v = \hat\varphi_{t*}w.
\]
Setting $v = 0$, $w \in \ker \hat\varphi_{t*}$:
$\omega_t(\hat w, w) = 0$ for all $w \in \ker \hat\varphi_{t*}$. By
minimal non-degeneracy
$\ker \hat\varphi_{t*} \cap \ker \omega_t = 0$ and the equivalent dual
formulation, this constrains $\hat w$ to lie in
$\omega_t^\flat{}^{-1}(\text{im} \hat\varphi_t^{*})$, equivalently in the
image of the algebroid action $\rho_{M_t}$. So
$\hat w = \rho_{\M_t}(a)$ for a unique
$a = (\hat a_S, \alpha_a) \in L_{\X_t}|_{\S_t}$ with
$\hat\varphi_{t*}\hat w = \hat a_S = i_{t*}\hat v$ (the latter by the
fiber-product condition for the kernel vector itself, applied
diagonally). Hence $\hat a_S = i_{t*}\hat v$ lies in $T\S_t$.
 
Returning to the kernel equation with general $(v, w)$ satisfying
$i_{t*}v = \hat\varphi_{t*}w$:
\[
   \omega_t(\hat w, w) - \gamma_t(\hat v, v)
   = \alpha_a(\hat\varphi_{t*}w) - \gamma_t(\hat v, v)
   = \alpha_a(i_{t*}v) - \gamma_t(\hat v, v),
\]
using \eqref{eq:star-M} for $\hat w = \rho_{\M_t}(a)$ with $1$-form
component $\alpha_a$, and the fiber-product identity. This must vanish
for all $v \in T\S_t$, so
\[
   \alpha_a(i_{t*}v) = \gamma_t(\hat v, v) \qquad \forall v \in T_s S_t.
\]
This is precisely \eqref{eq:star-S} with
$(\hat\xi_S, \alpha_\xi) = (\hat v, \alpha_a)$, identifying
$a = (\hat v, \alpha_a)$ with an element of the stabilizer subalgebroid
$A_{\S_t,\gamma_t} \subset L_{\X_t}|_{\S_t}$. Local freeness of the
$\H_t$-action at $(s, m)$ ensures that
$A_{\S_t,\gamma_t}$ at $s$ is exactly $\Lie(\H_t)$ at $s$. Hence
$a \in \Lie(\H_t)$, and $(\hat v, \hat w) = (\hat a_S, \rho_{\M_t}(a))
 = (\hat\xi_S, \hat\xi_M)$ for $\xi = a$, the fundamental vector field
of the $\H_t$-action. Then, combining Steps 1 and 2, $\ker \widetilde\omega_t$ at $(s, m)$ equals the
tangent space to the $\H_t$-orbit. After quotienting,
$\omega_{Z_t}$ is non-degenerate on $Z_t$.\\

The non-degeneracy is smooth in $t$ because all the family data
(strong Dirac map, IM form, stabilizer subalgebroid, $\H_t$-orbits) is smooth in $t$ by Theorem~\ref{thm:ham-deform} and
Definition~\ref{def:family-lag}.
\end{proof}

\begin{proof}[of Theorem \ref{thm:shifted}]
The pair $(\Omega_t, \lambda_{X,t})$ is smooth in $t$ by Definition \ref{def:family}. At $t = 0$, $\lambda_{X,0} = 0$ and $\Omega_0$ is symplectic, giving the standard $1$-shifted symplectic structure of the symplectic groupoid $\G_0$.\\

By \cite[Theorem 5.6]{bualibanu2026reduction}, the Lagrangian condition for $[\S_t / \H_t] \to [\X_t / \G_t]$ is equivalent to the acyclicity of the complex $C^\bullet_t$ in \eqref{eq:Lag-complex}. By Definition \ref{def:family-lag}(L1)--(L2), the terms of $C^\bullet_t$ have locally constant rank, and the differentials have locally constant rank. Together these imply that the kernel and image bundles $\ker \alpha_t, \text{im} \alpha_t, \ker \beta_t, \text{im} \beta_t, \ker \delta_t$ have locally constant rank in $t$, so the cohomology bundles $H^i(C^\bullet_t)$ are smooth vector bundles over $\S_t$ of locally constant rank in $t$. Therefore, the function $t \mapsto \mathrm{rk}\, H^i(C^\bullet_t)$ is locally constant on $\R$. Since $\R$ is connected, this function is globally constant. By hypothesis, $H^i(C^\bullet_t) = 0$ for $t \neq 0$. Hence $\mathrm{rk}\, H^i(C^\bullet_t) = 0$ for all $t \in \R$, and in particular $H^i(C^\bullet_0) = 0$. So $C^\bullet_0$ is acyclic, and the map $[\S_0 / \H_0] \to [\X_0 / \G_0]$ is a Lagrangian morphism for the symplectic groupoid $\G_0$, i.e.\ a reduction level in the Poisson sense.\\

By Theorem \ref{thm:ham-deform}, $(\M_t, \omega_t, \hat\varphi_t)$ is a Hamiltonian $\G_t$-space for each $t$. By \cite[Proposition 5.5]{bualibanu2026reduction}, this is equivalent to a Lagrangian morphism $[\M_t / \G_t] \to [\X_t / \G_t]$. Smoothness in $t$ is direct from the smoothness of the Hamiltonian-$\G$-space data in Theorem \ref{thm:ham-deform}.\\

Smoothness of $\mathcal{Z} \to \mathbb{R}$ and the
diffeomorphism $\S_t \times_{\X_t} \M_t \cong \hat\varphi_t^{-1}(\S_t)$ are
established in Lemma~\ref{lem:smooth-family-fibre-product}.
Closedness of the upstairs form $\widetilde\omega_t$, hence of the
descended form $\omega_{Z_t}$, is
Proposition~\ref{prop:closedness}. Non-degeneracy of $\omega_{Z_t}$ is
Proposition~\ref{prop:nondegeneracy}. Combining, $\omega_{Z_t}$ is
symplectic in the ordinary sense (closed and non-degenerate) for every
$t \in \mathbb{R}$; in particular at $t = 0$ this is
Marsden-Weinstein symplectic reduction.
 
By \cite[Proposition 3.7 and Remark 5.12]{bualibanu2026reduction}, applied fiberwise, $Z_t$ is diffeomorphic to
$(\M_t \times_{\X_t} (\G_t)_{\S_t})/\G_t$ and presents
the stack-level fiber product
$[\S_t/\H_t] \times_{[\X_t/\G_t]} [\M_t/\G_t]$.
Smoothness in $t$ of this identification follows from
Lemma~\ref{lem:smooth-family-fibre-product} together with the
smooth-family data of Theorem~\ref{thm:ham-deform} (3).

\end{proof}

\begin{Rem}\label{rem:rank-constancy}
In the principal applications (see the next section), condition 2 is verified by direct construction or by appealing to ambient regularity hypotheses (e.g.\ regular moment values, locally free actions).
\end{Rem}
 
\begin{Rem}
The clean-intersection and local-freeness hypotheses of Theorem \ref{thm:shifted}(4) cannot be dropped. Already in the simplest example of reduction at a point for a circle action, the condition $\mu^{-1}(0)$ being regular / $S^1$ acting freely can fail at discrete values of a deformation parameter, causing the reduced space to become singular. We emphasize that our statement is a deformation of \emph{smooth} reductions, not a claim that smooth reductions exist for arbitrary deformations.\\

In the DAG setting of \cite{pantev2013shifted,calaque2021derived}, the derived fiber product $Z_t^{\mathrm{der}}$ is always $0$-shifted symplectic, and its classical truncation is $Z_t$ (smooth) precisely when the clean-intersection hypothesis holds. Part (4) of our theorem would be the classical shadow of a derived statement, which we don't establish here.
\end{Rem}

\begin{Exp}
    The fundamental example is the deformation $D(G)\rightrightarrows G \too T^{*}G\rightrightarrows \g^{*}$ of Proposition \ref{prop:fund}. In the presentation model of Section \ref{Section 7}: \begin{itemize}
        \item The presenting data of $[G/D(G)]$ deforms to the presenting data of $[\g^{*}/T^{*}G]$
        \item This is ''the groupoid lift'' of the Dirac deformation $L_G \rightsquigarrow L_{\g^*}$ of Proposition \ref{Dirac G}. 
    \end{itemize}
\end{Exp}

\begin{Exp}
    Fix $x\in \g$ such that $\O_x \subset \g^{*}$ is a regular co-adjoint orbit and $\exp_{|\O_x}$ is an embedding of $\O_x$ into a conjugacy class $\C_{\exp(x)} \subset G$ (this holds for $x$ in a neighborhood of $0\in \g^{*}$). Let $(M,\mu,\omega,G)$ be a quasi-Hamiltonian $G$-manifold that deforms to a Hamiltonian $G$-manifold $(N,\nu,\sigma,G).$ Under the conditions of Theorem \cite[Theorem 3.3]{burelle2026deformations}, by Theorem \ref{thm:shifted}, the reduction $M//_{\C_{\exp(tx)}}^{\text{qH}}G \coloneqq \widehat{\mu}^{-1}_{t}(\C_{\exp(tx)})/G$ is a smooth symplectic manifold for $t\neq 0,$ and deforms smoothly to the Marsden-Weinstein reduction $\nu^{-1}(\O_x)/G$ at $t = 0.$ 
\end{Exp}

\begin{Exp}(Implosion and Steinberg slice.)

\noindent\textbf{Implosion.} The quasi-Hamiltonian implosion $D(G)_{\exp(\bar{A})} \too G$ and the symplectic implosion $(T^{*}G)_{\t^{*}}$ (\cite[Example 3.8(1)]{bualibanu2026reduction}) are both singular stratified spaces (see \cite{guillemin2002symplectic,hurtubise2006group}). The deformation $D(G) \rightsquigarrow T^{*}G$ restricts to a stratified deformation of the imploded spaces (see \cite{maiza2026multiplicative}). Theorem \ref{thm:shifted}(2) applies stratum by stratum (with Condition (L2) verified on each stratum). The statement ''$[\exp(\bar{A})/\H_{\bar{A}}] \too [G/D(G)]$ deforms to $[\t^{*}_{+}/\H_{\t^{*}_{+}}]$'' is an equality of families of stratified Lagrangian morphisms.\\

\noindent\textbf{Steinberg slice.} Assume $G$ is a complex reductive group with Steinberg cross-section $\Sigma \subset G$ and Kostant cross-section $\Sigma_0 \subset \g^{*}.$ The one-sided Steinberg slice $D(G)_{\Sigma}$ deforms to the one-sided Kostant slice $(T^{*}G)_{\Sigma_0}.$ The Lagrangian $[\Sigma/\{e\}]\too [G/D(G)]$ deforms to $[\Sigma_{0}/\{0\}].$ Here both $\Sigma$ and $\Sigma_0$ are smooth slices and the Lagrangian condition is the transversality-cross section property, which is open and stable. Conditions (L1) and (L2) are verified by the constancy of slice dimensions along the family.
\end{Exp}

\begin{Exp}(The Grothendieck-Springer resolution.)
This example requires $G$ to be a complex reductive group with Borel subgroup $B$ and maximal torus $T.$ Let $\mathfrak{b} = \Lie(B), \t = \Lie(T), U = [B,B], \mathfrak{u} = \Lie(U).$ \\ By \cite[Example 4.8]{bualibanu2026reduction}, the multiplicative Grothendieck-Springer resolution $G \times^{B} B \too G, \quad [g:b] \mtoo gbg^{-1},$ is a Hamiltonian quasi-Poisson $G$-manifold. Viewed as a Hamiltonian $D(G)$ space, it gives a Lagrangian $[(G \times^{B}B)/D(G)] \too [G/D(G)].$\\
The deformation $B \rightsquigarrow \mathfrak{b}$ (The Dirac deformation of Proposition \ref{Dirac G} applied to $B$ with its inherited inner product) induces a deformation $G\times^{B} B \rightsquigarrow G\times^{B} \mathfrak{b} = \tilde{\g},$ the ordinary Grothendieck-Springer resolution. By Theorem \ref{thm:ham-deform} applied to this family, the Hamiltonian $D(G)$-space structure on $G\times^{B} B$ deforms to the Hamiltonian $T^{*}G$-structure on $\tilde{\g}.$ The Lagrangian \[
[(G\times^{B}B)/D(G)] \too [G/D(G)]
\]
deforms to 
\[
[\tilde{\g}/T^{*}G] \too [\g^{*}/T^{*}G].
\]
Fiberwise, the quasi-Hamiltonian leaves $G\times^{B} tU$ (indexed by $t\in T$) deform to the Hamiltonian leaves $G \times^{B}(\xi + u)$ (indexed by $\xi \in \t$).
\end{Exp}

\bibliographystyle{plain}
\bibliography{dirac.bib}
\end{document}